\documentclass[10pt,reqno]{article}
\usepackage{amssymb}
\usepackage{amsfonts}
\usepackage{amsmath}
\usepackage{amsthm}
\usepackage{remark}
\usepackage{enumerate}
\usepackage{xypic}
\usepackage{graphicx}
\usepackage{hyperref}

\newtheorem{thm}{Theorem}[section]
\newtheorem{theorem}[thm]{Theorem}
\newtheorem{lemma}[thm]{Lemma}
\newtheorem{corollary}[thm]{Corollary}
\newtheorem{proposition}[thm]{Proposition}
\newremark{definition}[thm]{Definition}
\newremark{remark}[thm]{Remark}
\newremark{example}[thm]{Example}
\newremark{notation}[thm]{Notation}
\newremark{algo}[thm]{Algorithm}
\newremark{Question}[thm]{Question}
\newremark{emp}[thm]{}
\numberwithin{equation}{section}

\newcommand{\Ast}{\widetilde{A}_{(\tilde{s})}}

\newcommand{\bl}{\mathrm{bl}}
\newcommand{\ord}{\operatorname{ord}}

\newcommand{\Spec}{\operatorname{Spec}}
\newcommand{\Proj}{\operatorname{Proj}}

\newcommand{\Z}{\mathbb Z}
\newcommand{\F}{\mathbb F}

\newcommand{\N}{\mathbb N}
\newcommand{\Q}{\mathbb Q}

\newcommand{\cO}{\mathcal O}
\newcommand{\cE}{\mathcal E}
\newcommand{\PP}{\mathbb P}
\newcommand{\m}{\mathfrak m}
\newcommand{\n}{\mathfrak n} 
\newcommand{\p}{\mathfrak p}
\newcommand{\q}{\mathfrak q}
\newcommand{\chara}{\mathrm{char}}
\newcommand{\Gr}{\mathrm{Gr}}

\newcommand{\Frac}{\mathrm{Frac}}

\title{\bf Desingularization of double covers of  regular surfaces}
\author{Qing Liu\thanks{I would like to thank INRIA Sud-Ouest and the team Canari for
  their hospitality during the preparation of a part of this work.}
}
\date{}

\begin{document}

\maketitle


\begin{abstract} Let $Z$ be a noetherian integral excellent regular scheme of
  dimension $2$. Let $Y$ be an integral normal scheme endowed with a
  finite flat morphism $Y\to Z$ of degree $2$. We give a
  description of Lipman's desingularization of $Y$ by explicit equations, 
leading to a desingularization algorithm for $Y$. 
\end{abstract}

\tableofcontents

\begin{section}{Introduction} 

The existence of the desingularization of excellent \emph{surfaces}
(reduced noetherian excellent schemes of dimension 2) is well known from the
theoretical point of view. However, in practice, the 
implementation of an algorithm of desingularization in full
generalities is rather complicated, see \cite{CJS} and \cite{FRS} for an
overview of various approaches. In view of applications in arithmetic
geometry, a lot of work also have been 
done for smooth projective curves $C$ over a discrete
valuation field $K$. The aim is then to find a projective regular
scheme over the valuation ring $\cO_K$ of $K$, with generic fiber isomorphic to
$C$, see \cite{KW}, \cite{Mu} and the references therein, when $C$
is a cyclic cover of $\PP^1_K$, mostly assuming the residue
characteristic of $K$ to be prime to the order of the cover. 
For hypersurfaces in the affine plane $\mathbb A^2$ over $\cO_K$,
see also  \cite{TDo} with some powerful applications in number
theory. 

In this work we deal with a special type of surfaces for which
Lipman's method works very well and is completely explicit. 

\begin{theorem}[Lipman, \cite{Lip}] Let $Y$ be a noetherian integral
  normal excellent surface. Consider a sequence of birational morphisms 
  \begin{equation}
    \label{eq:lipman}
... \to Y_n \to \dots \to Y_1 \to Y_0:=Y     
  \end{equation}
where each $Y_{i+1} \to Y_i$ is the normalization of the blowing-up of
$Y_i$ along  some singular closed (reduced) point. Then the sequence is
necessarily finite.  
\end{theorem}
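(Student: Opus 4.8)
\emph{Strategy and reduction to the local case.} The plan is to attach to each $Y_i$ a finite family of nonnegative integers, one per singular point, which can only decrease as one moves up the tower, and to conclude by the absence of infinite strictly descending chains. First one reduces to a local statement. Each $Y_i$ is again a noetherian integral normal excellent surface, so its singular locus is a finite set of closed points, and each $Y_{i+1}\to Y_i$ is an isomorphism over the complement of the single point it blows up (blowing up a point, then normalizing, changes nothing over the already normal --- here regular --- locus). A short induction shows that the locus of $Y_0$ over which $Y_i\to Y_0$ fails to be an isomorphism remains inside $\mathrm{Sing}(Y_0)$, so every point $y_i$ that is ever blown up maps into the finite set $\mathrm{Sing}(Y_0)$. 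Were the tower infinite, by the pigeonhole principle infinitely many of the $y_i$ would lie over one fixed $z\in\mathrm{Sing}(Y_0)$; base changing along $\Spec\cO_{Y_0,z}\to Y_0$ --- localization commutes with blowing up and with normalization, while the steps centered away from $z$ become isomorphisms --- one is reduced to proving: \emph{for a $2$-dimensional excellent normal local ring $(A,\m)$ there is no infinite tower of normalized point blow-ups, each centered at a singular closed point lying over $\m$.}

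\emph{The controlling invariant.} For a $2$-dimensional excellent normal local ring $(B,\n)$ and a proper birational morphism $\pi\colon X\to\Spec B$ with $X$ integral and normal, $R^1\pi_*\cO_X$ is coherent and supported at $\n$, hence of finite length; set
\[
p_g(B):=\sup_{\pi}\ \length_B\bigl(R^1\pi_*\cO_X\bigr),
\]
the supremum running over all such $\pi$. The crucial analytic input --- and the place where excellence is truly needed --- is that $p_g(B)$ is \emph{finite}; moreover $p_g(B)=0$ precisely when $B$ is a rational (pseudo-rational) singularity, and, the admissible values being bounded nonnegative integers, the supremum is attained. Feeding a composite $\widetilde X\to X\to\Spec B$ (with $\widetilde X\to X$ an isomorphism away from the singular points of $X$ and attaining $p_g$ at each of them) into the Leray spectral sequence, and using $R^2\pi_*\cO_X=0$ since the fibres of $\pi$ are at most curves, one gets for every $\pi$ as above, with $y_1,\dots,y_m$ the singular points of $X$,
\[
\length_B\bigl(R^1\pi_*\cO_X\bigr)\ +\ \sum_{j=1}^{m} p_g(\cO_{X,y_j})\ \le\ p_g(B).
\]
Applied to the modifications $Y_i\to\Spec A$, this yields that $d_i:=\max_{y}\,p_g(\cO_{Y_i,y})$, the maximum over the singular points $y$ of $Y_i$, is non-increasing and lies in $\{0,1,\dots,p_g(A)\}$, and that at each stage at most $p_g(A)$ singular points carry positive geometric genus.

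\emph{Descent and conclusion.} As $(d_i)$ is non-increasing with finitely many possible values, it is enough to show it cannot remain constant forever. While $d_i=d>0$: by the displayed inequality, blowing up a point of genus $d$ leaves at most one singular point whose genus is still $d$, and the core assertion is that this cannot persist --- after finitely many steps that genus must drop --- so $d_i$ eventually decreases. Once $d_i=0$, every singular point of $Y_i$ is rational, and, by the classical resolution of rational surface singularities (Artin; Lipman), the tree of normalized blow-ups above each such point is finite; since there are finitely many of them, $Y_N$ becomes regular after finitely many more steps and the tower stops. An infinite tower is thus impossible.

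\emph{The main obstacle.} Two points carry the whole proof: the finiteness (and birational invariance) of $p_g(B)$ for excellent local rings that are not yet known to be resolvable --- this is a genuine theorem, resting on excellence / the Nagata property --- and the claim that a singular point of positive geometric genus has its genus lowered after finitely many blow-ups. The latter is delicate precisely because the na\"ive measures of a singularity, such as multiplicity or embedding dimension, can \emph{increase} under a single blow-up; this is the very reason desingularization of surfaces is not a formality. By contrast, once all singular points are rational, the remaining bookkeeping is comparatively soft.
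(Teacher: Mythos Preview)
The paper does not prove this theorem at all: it is stated in the introduction with attribution to Lipman \cite{Lip} and used as a black box throughout. There is therefore no ``paper's own proof'' to compare your attempt against.

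As for your proposal itself, it is a faithful outline of Lipman's actual strategy --- reduce to the local case, control the tower via the geometric genus $p_g$, and finish off rational singularities separately --- but it is not a proof. You explicitly acknowledge this in your final paragraph: the two load-bearing claims, namely (i) that $p_g(B)$ is finite for an arbitrary $2$-dimensional excellent normal local ring \emph{before} one knows a resolution exists, and (ii) that a point of positive geometric genus must have its genus strictly lowered after finitely many normalized blow-ups, are asserted but not argued. Claim (i) is precisely what Lipman establishes via duality theory and the pseudo-rationality machinery, and claim (ii) is the technical heart of his paper; you correctly identify that multiplicity can go up, which is why neither claim is a formality. What you have written is a roadmap with the two hardest stretches left as signposts. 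If the intent was to supply an independent proof, these gaps are genuine and not fillable in a paragraph; if the intent was merely to summarize Lipman's argument for context, then it would be more honest to present it as such and cite \cite{Lip} for the missing steps, exactly as the paper under review does.
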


The surfaces we are interested in are double covers of regular
surfaces. For simplicity we only work with noetherian excellent
schemes ({\it e.g.}, algebraic varieties over a field, schemes of
finite type over Dedekind domains of characteristic $0$ or complete
discrete valuation rings, see \cite{LB}, \S 8.2.3). 

\begin{definition} \label{def-dc} 
Let $Z$ be an integral noetherian regular scheme.
We define  a \emph{double cover of $Z$} as a reduced  
scheme $Y$ endowed with a finite flat morphism 
of degree $2$
$$\psi : Y\to Z.$$
We call $Y$ a \emph{normal double cover of $Z$} if moreover $Y$
is integral and normal. 
\end{definition}

\begin{example} \label{exd} Let $C$ be a hyperelliptic curve over $\Q$, with
  the canonical degree $2$ morphism $C\to \PP^1_\Q$. Let
  $Z=\PP^1_{\Z}$ and let $W$ be the normalization of $Z$ in the
  function field $K(C)$ of $C$. Then $Z$ is a regular excellent surface,
  and $W\to Z$ is a normal double cover of $Z$. The surface $W$ is a
  so called Weierstrass model of $C$. It is singular in general. A
  resolution
  of singularities on $W$ gives a projective regular model of $C$ over
  $\Z$. 
\end{example}

\begin{example}  \label{exd_2} More generally, let $Z$ be an
  integral noetherian excellent surface.
  Let $L$ be a (possibly inseparable) 
  quadratic extension of the function field $K(Z)$ of $Z$ and
  let $Y$ be the normalization of $Z$ in $L$.  Then $Y\to Z$ is a
  normal double cover. Indeed, $Y\to Z$ is finite because $Z$ is
  excellent. The scheme $Y$ a noetherian integral normal surface, 
  hence Cohen-MaCaulay, hence flat over $Z$ because the latter is regular.  
\end{example}

Now come back to the general situation. The key point in our situation
is that if we start with a normal double cover $Y\to Z$ of a regular
surface, then in any sequence like \eqref{eq:lipman}, all the $Y_i$'s
are again normal double covers of regular surfaces. 
More precisely (Theorem~\ref{desing-main}),
we have a canonical  commutative diagram
\[
\xymatrix{
  Y_n \ar[r] \ar[d] &  Y_{n-1} \ar[r] \ar[d]& \cdots  \ar[r] &  Y_1
  \ar[d] \ar[r] &  Y_0=Y \ar[d] \\ 
   Z_n \ar^{\bl_{q_{n-1}}}[r]         &  Z_{n-1}
   \ar^{\bl_{q_{n-2}}}[r] 
   & \cdots  \ar^{\bl_{q_1}}[r] & Z_1 
   \ar^{\bl_{q_0}\hskip 4pt }[r] &   \  Z_0:=Z 
}
\]
where the vertical arrows are normal double covers of regular
surfaces, and the lower horizontal arrows are blowing-ups of  
(reduced) closed points $q_i\in Z_i$. Each morphism $Y_{i+1}\to Y_i$ is a
\emph{normalized blowing-up}, meaning that we blow-up the 
singular point (with the reduced structure) of $Y_i$ lying over $q_i$,
then we take the normalization. 

Moreover, we can give explicit equations for the $Y_i$'s as follows. 
First one can cover $Z_0$ by small affine open subsets $V$'s over which
$Y_0$ is defined by  a monic polynomial of degree $2$
(Lemma~\ref{CM-dc}):  
\begin{equation}
  \label{eq:first_eq}
y^2+ay+b=0, \quad a, b\in \cO_{Z}(V).     
\end{equation}
If $Z_1\to Z_0$ is the blowing-up of a point $q_0\in Z_0$, and if 
$\lambda_{q_{0}}(Y_{0})$ is the multiplicity of $Y_{0}$ at the point
lying over $q_{0}$ (Definitions~\ref{def-lambda} and
\ref{double-cov}), we set $r=[\lambda_{q_0}(Y_0)/2]$.  
Let $s,t$ be a system of coordinates of
$Z_0$ at $q_0$. Then $Y_{1}$ is covered by the affine schemes  
defined respectively by
$$
y_1^2+(a/s^r)y_1+(b/s^{2r})=0, \quad y_1=y/s^{r}
$$
and 
$$
z_1^2+(a/t^r)z_1+(b/t^{2r})=0, \quad z_1=y/t^r
$$ 
(Proposition~\ref{lemma2.12}). And we continue in the same way for the
$Y_i$'s. 
\medskip

The computation of the multiplicity $\lambda_q(Y)\in \N$ is
straightforward when $2\in \cO_{Z,q}^*$. Indeed we can reduce to $a=0$
and $b(q)=0$. Then $\lambda_q(Y)$ is just the order of $b$ in the regular
local ring $\cO_{Z,q}$ (Proposition~\ref{lambda-f}(2)). If $2$ is not
invertible, there is no 
canonical choice of the equation~\eqref{eq:first_eq}. 
We give a simple algorithm (Lemma~\ref{lambdaz}) to determine
$\lambda_q(Y)$ in the case. 
\medskip

As a byproduct of Theorem~\ref{desing-main} 
we prove the existence of a simultaneous resolution of singularities
for covers of degree $2$ of excellent normal surfaces (Corollary~\ref{sim}). 
\medskip

When $2$ is invertible in $\cO_Z$, the sequence $(Z_i)_{i\geq 0}$
is classical and  
consists in making the branch locus of $Y\to Z$ regular. 
In the case $Z=\PP^1_R$ over a discrete valuation ring $R$,
this is essentially equivalent to locate the branch locus of
the generic fiber of $Y\to Z$. The language of clusters  (\cite{DDMM},
\S 1.1, \cite{Mu}) is then more explicit and convenient. 
The novelty here is that our method works in any (mixed or equal)
characteristic. It can also be applied to the case when $Z$ is a regular
model over $R$ of a smooth curve $Z_K$ over $K:=\Frac(R)$ and
$Y_K\to Z_K$ is a double cover of smooth curves over $K$. 
\medskip

Our initial motivation is to give an easily implementable algorithm to finding
 a desingularization of the surface $W$ in Example~\ref{exd}, in the
 spirit  of Tate's algorithm  for elliptic curves \cite{Tate}. Indeed,
 a  regular model of $C$ over 
 $\Z$ allows to compute some important arithmetic invariants of $C$ as
 its Artin conductor,  the Tamagawa number of its Jacobian $J(C)$ as
 well  as  the (finite part of) $L$-function of  $J(C)$.
 Applications will be given in \cite{LR}. An 
 implementation in PARI/gp \cite{pari} by B. Allombert is ongoing.
 \medskip

Now let us describe the content of this paper.
In \S~\ref{sect:lambda}, we introduce our multiplicity $\lambda_p(Y)$ for any
double cover $Y\to Z$ of regular scheme and for any point $p\in Y$. This invariant already
appeared in various works, when $Z$ is a smooth surface over a
perfect field, or in the context of Hironaka's characteristic
polyhedra (see {\it e.g.} \cite{CJSc}, \S 5). In our setting it is
particularly simple to define. 

Normalized blowing-ups are defined in \S~\ref{normal-bl} as 
blowing-ups followed by a normalization. These are the morphisms
appearing in the top line of Sequence~\eqref{eq:lipman}.  
We prove the main theorem~\ref{desing-main} about the description of 
a sequence of normalized blowing-ups. The equation of each $Y_i$ is 
given in Proposition~\ref{lemma2.12}.  

In \S~\ref{control-l}, we give a bound on the
sum of the multiplicities running through the closed points of the exceptional locus
of the normalized blowing-up $Y_1\to Y$ (Theorem~\ref{lambda-sum}).
Finally in \S~\ref{algo} we apply the above results to describe a
desingularization algorithm for the double covers of regular excellent
surfaces. 
\medskip

The general setting in this work is the following: $Z$ is an
\emph{integral noetherian excellent regular scheme}, $Y\to Z$ is a
\emph{double cover}.
No assumption are made on the residue fields of $Z$.  Except in \S~\ref{sect:lambda}, $Z$ will be of dimension $2$. 
\end{section}

\begin{section}{The multiplicity \texorpdfstring{$\lambda_p(Y)$}{lambdapY}} 
  \label{sect:lambda}
  
  The multiplicity of $Y$ at a closed point $p$ is a local invariant
  depending only on $\cO_{Y,y}$.   
  Let $A$ be an integral noetherian regular ring.
  Let $B=A[y]$ be an $A$-algebra with $y^2+ay+b=0$ for some  $a, b\in A$.    
We will define a multiplicity $\lambda_{\m}(B)\in \N $ for any maximal ideal
$\m$ of $A$. It is a measure of singularity of $B$ at the maximal
ideals lying over $\m$. 

\subsection{Definition and basic properties for local rings} 
Let us first recall some elementary facts on regular rings.
Let $\m$ be a maximal ideal of $A$,  let $d=\dim A_\m$ and
$k=A/\m$.  Consider  
$$\Gr_\m(A)=\oplus_{\ell\ge 0} \m^\ell/\m^{\ell+1}.$$
It is canonically a 
homogeneous $k$-algebra. As $A$ is regular, it is isomorphic to
$\mathrm{Sym}_k(\m/\m^2)$, the polynomial ring in $d$ variables over $k$. 

For any element non zero element $a\in A$, $\ord_\m(a)$, or simply 
$\ord(a)$ if $\m$ is implicitly given, is the supremum
of the integers $n$ such that $a\in \m^n$. 
It depends only on the ideal $a A$. By convention
$\ord(0)=+\infty$. We have $\ord(ab)=\ord(a)+\ord(b)$ for
$a, b\in A\setminus \{ 0 \}$.  
As $\Gr_\m(A)$ is an integral domain, $\ord$ extends to a 
valuation on $\Frac(A)$.

\begin{definition}\label{def-initf} Keep the above notation. If
  $\ord_\m(a)=\ell \ne +\infty$, the \emph{initial form of $a$} is the image of
  $a$ in $\m^{\ell}/\m^{\ell+1}$. The latter can be viewed as
  the $k$-vector space of homogeneous polynomials in $d$ variables
  and of degree $\ell$. 
\end{definition}

\begin{lemma}\label{CM-dc} Let $R$ be a
  noetherian ring.  Let $D$ be an $R$-algebra, free of rank $2$ as
  $R$-module. Then there exists $y\in D$ such that $D=R\oplus Ry$, and
  we have for some $a, b\in R$
  $$
y^2+ay+b=0. 
  $$
Moreover, any other $z$ such that $D=R[z]$ satisfies $z=uy+v$ with
  $u\in R^*$ and $v\in R$. 
\end{lemma}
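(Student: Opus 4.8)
The plan is to produce an $R$-module basis of $D$ of the form $\{1,y\}$; granting this, both the quadratic relation and the uniqueness statement drop out immediately. First I would record that the structure map $R\to D$ is injective: an $r\in R$ mapping to $0$ annihilates $1_D$, hence annihilates all of $D$, hence annihilates the free module $R^{2}$, forcing $r=0$. Identify $R$ with $R\cdot 1_D\subseteq D$ and put $Q:=D/R$, a finitely generated $R$-module (a quotient of $D$), hence finitely presented as $R$ is noetherian.

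The crux is to show $Q$ is free of rank $1$. Localize at a prime $\p$. The fibre $D\otimes_R\kappa(\p)$ is a nonzero $\kappa(\p)$-algebra (it is $2$-dimensional as a vector space), so the image of $1_D$ in it is nonzero; thus $1_D$ is part of an $R_\p$-basis of the free module $D_\p$ (lift a complementary basis vector of the fibre and apply Nakayama), and therefore $Q_\p\cong D_\p/R_\p 1_D$ is free of rank $1$. So $Q$ is locally free of rank $1$, hence projective, and $0\to R\to D\to Q\to 0$ splits, giving $D\cong R\oplus Q$ as $R$-modules. Taking second exterior powers, $R\cong\wedge^{2}D\cong\wedge^{2}(R\oplus Q)\cong R\otimes_R Q\cong Q$, since $\wedge^{2}R=0$ and $\wedge^{2}Q=0$ (both have rank $1$). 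Hence $Q$ is free of rank $1$; choosing a generator of the complementary summand produces $y\in D$ with $D=R\oplus Ry$. As $y^{2}\in D=R\oplus Ry$, write $y^{2}=-ay-b$ with $a,b\in R$, that is, $y^{2}+ay+b=0$.

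Finally, for uniqueness, given $z$ with $D=R[z]$ I would write $z=uy+v$ with $u,v\in R$ (uniquely, since $\{1,y\}$ is a basis). Then $R[z]=R[z-v]=R[uy]$, and the relation $y^{2}=-ay-b$ yields $(uy)^{2}=-(ua)(uy)-u^{2}b\in R+R(uy)$, whence $(uy)^{n}\in R+R(uy)$ for all $n\ge 1$ by induction; so $R[uy]=R+R(uy)$ and $D=R+R(uy)$. Reducing modulo $R$, the free rank-$1$ module $D/R\cong Ry$ (with $y\mapsto 1$) is generated by the image $u$ of $uy$, so $uR=R$, i.e.\ $u\in R^{*}$, as required. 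The only step that is not pure bookkeeping is the middle one — upgrading "$Q$ is invertible" to "$Q$ is free" — and the exterior-power computation is precisely what exploits that $Q$ is the complement of a free rank-$1$ summand of a free rank-$2$ module, so that no class in $\Pic(R)$ can survive.
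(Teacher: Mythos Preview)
Your proof is correct. The overall strategy matches the paper's --- show that $R\cdot 1_D$ is locally a direct summand of $D$, then globalize --- but where the paper simply invokes two results from Bourbaki (Alg.\ Comm.\ II, \S 3, Proposition~6 and Corollary~1 to Proposition~12) and declares the remaining claims ``obvious'', you supply a self-contained argument. The main point of difference is your exterior-power computation $Q\cong\bigwedge^{2}(R\oplus Q)\cong\bigwedge^{2}D\cong R$, which explicitly upgrades ``$Q$ is invertible'' to ``$Q$ is free''; the paper does not isolate this step. Your treatment of the uniqueness claim is likewise spelled out rather than left to the reader. The trade-off is length versus transparency: the paper's proof is a single sentence of citations, while yours is a short paragraph of honest work that can be read without chasing references.
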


\begin{proof} Locally on $\Spec R$, $R$ is a direct summand in 
  $D$ (\cite{Bou}, II, \S 3, Proposition 6 with $M=R$
  and $N=D$),   then apply {\it op. cit.}, Cor. 1 to Proposition 12
  with  $M=D$ and $N=R$, which implies that $R$ is a direct summand in
  $D$. The existence of the equation on $y$ and the statement on $z$ are obvious. 
\end{proof}

\begin{definition}\label{def-lambda} Let $A$ be an integral noetherian
  regular ring.  Let $B$ be a reduced $A$-algebra, 
free of rank $2$ as $A$-module. Write $B=A[y]$ with 
\[ y^2+ay+b=0 \]
for some $a, b\in A$. 

  Fix a maximal ideal $\m\subset A$.
For each generator $y$ as above, denote by  
$$\lambda_\m(y)=\min\{ 2\ord_\m(a), \ord_\m(b)\}.$$ 
Define 
$$\lambda_\m(B)=\sup_y \{ \lambda_\m(y)  \}\in \mathbb N \cup \{ +\infty\},$$
where the supremum runs through all generators $y$ as above. Note that
when an $y$ is given, we only need to compute $\lambda(y+c)$ for all $c\in
A$ to determine $\lambda_\m(B)$. 
\end{definition}

\begin{remark} The above multiplicity is  very likely twice the invariant $\delta(B)\in \mathbb Q_{\ge 0}$ considered by \cite{CJSc}, \S 5,  in a much more
general context involving Hironaka's characteristic polyhedra.
We use the notation $\lambda_\m(B)$ to keep coherent with our previous
work \cite{LTR}, D\'efinition 10 (with $r=1$).

The multiplicity $\lambda_\m(B)$ is local, in the sense that it depends only on
$A_\m \to B_\m$ as shown in the lemma below. 
\end{remark}

\begin{lemma} \label{lambda-local} Keep the above notation.
  \begin{enumerate}[\rm (1)] 
\item We have $\ord_\m=\ord_{\m A_\m}$ on $\Frac(A)$. 
\item  Consider the $A_\m$-algebra $B_\m$, then 
  $$\lambda_\m (B)=\lambda_{\m A_\m} (B_\m).$$
  \end{enumerate}
\end{lemma}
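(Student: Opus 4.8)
The plan is to obtain (1) by reducing the order function on $A$ to a statement about the Artinian local rings $A/\m^n$, and then to deduce (2) from (1) by an $\m$-adic approximation argument together with the description of generators given by Lemma~\ref{CM-dc}. For (1), the key point is that $\m^n A_\m\cap A=\m^n$ for every $n\ge 1$: since $\m$ is maximal, it is the only prime of $A$ containing $\m^n$, so $A/\m^n$ is local with maximal ideal $\m/\m^n$, hence $A/\m^n\xrightarrow{\sim}(A/\m^n)_{\m}=A_\m/\m^n A_\m$; comparing the kernels of $A\to A/\m^n$ and $A\to A_\m/\m^n A_\m$ gives the claimed identity of ideals, and therefore $\ord_\m(a)=\ord_{\m A_\m}(a)$ for all $a\in A\setminus\{0\}$. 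Since $A$ and $A_\m$ are regular, $\Gr_\m(A)$ and $\Gr_{\m A_\m}(A_\m)$ are integral domains, so both orders extend to valuations on the common fraction field $\Frac(A)=\Frac(A_\m)$; two valuations of a field that agree on a subring having that field as fraction field coincide, which proves (1).

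For (2), note first that $B_\m=S^{-1}B$ with $S=A\setminus\m$; as $B$ is free over the domain $A$, every element of $S$ is a non-zero-divisor of $B$, so $B_\m$ is a localization of the reduced ring $B$, hence reduced, and it is $A_\m$-free of rank $2$, so $\lambda_{\m A_\m}(B_\m)$ is defined. Fix $y$ with $B=A\oplus Ay$ and $y^2+ay+b=0$; then $B_\m=A_\m\oplus A_\m y$ as well, and $y+c$ satisfies $(y+c)^2+(a-2c)(y+c)+(b+c^2-ac)=0$. By Lemma~\ref{CM-dc} and the observation in Definition~\ref{def-lambda}, $\lambda_\m(B)=\sup_{c\in A}\lambda_\m(y+c)$ and $\lambda_{\m A_\m}(B_\m)=\sup_{c\in A_\m}\lambda_{\m A_\m}(y+c)$. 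For $c\in A$ the element $y+c$ is also a generator of $B_\m$ over $A_\m$, and applying (1) to the coefficients $a-2c$ and $b+c^2-ac$ gives $\lambda_\m(y+c)=\lambda_{\m A_\m}(y+c)$; taking suprema yields $\lambda_\m(B)\le\lambda_{\m A_\m}(B_\m)$.

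For the reverse inequality, fix $c\in A_\m$ and set $\ell=\lambda_{\m A_\m}(y+c)$. Since $B_\m$ is reduced and $y+c\notin A_\m$, we have $\ell<+\infty$ (otherwise $(y+c)^2=0$ would force $y+c=0$). Choose an integer $N>\ell$ and, via the isomorphism $A/\m^N\xrightarrow{\sim}A_\m/\m^N A_\m$ from (1), pick $c'\in A$ with $c-c'\in\m^N A_\m$. Then $(a-2c')-(a-2c)=2(c-c')$ and $(b+c'^2-ac')-(b+c^2-ac)=(c'-c)(c'+c-a)$ both lie in $\m^N A_\m$, so by the ultrametric inequality $\ord_{\m A_\m}(a-2c')\ge \ell/2$ and $\ord_{\m A_\m}(b+c'^2-ac')\ge \ell$ (using that $\ord_{\m A_\m}(a-2c)\ge \ell/2$, $\ord_{\m A_\m}(b+c^2-ac)\ge \ell$ and $N>\ell$). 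Since $a-2c'$ and $b+c'^2-ac'$ lie in $A$, part (1) converts these into the same lower bounds for $\ord_\m$, whence $\lambda_\m(y+c')\ge \ell$. As $y+c'$ generates $B$ over $A$, this gives $\lambda_\m(B)\ge \ell=\lambda_{\m A_\m}(y+c)$, and taking the supremum over $c\in A_\m$ yields $\lambda_\m(B)\ge\lambda_{\m A_\m}(B_\m)$.

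The only genuinely non-formal step is this last inequality: one must approximate an arbitrary generator of $B_\m$ over $A_\m$ by a generator of $B$ over $A$ without lowering $\lambda_\m$, which is exactly what the $\m$-adic density of $A$ in $A_\m$ supplies; everything else is bookkeeping around the fact that localizing a regular ring at a maximal ideal changes neither its associated graded ring nor the orders of elements of $A$.
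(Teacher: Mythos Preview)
Your proof is correct and follows the same strategy as the paper's: part~(1) via $\m^n A_\m\cap A=\m^n$, and the reverse inequality in~(2) by $\m$-adically approximating a generator of $B_\m$ over $A_\m$ by one of $B$ over $A$. The paper is terser (it starts from an arbitrary generator $z$ of $B_\m$, writes $y=uz+v$ with $u\in A_\m^*$, and approximates $v$), whereas you fix $y$ and approximate the translate $c\in A_\m$; these are equivalent since $\lambda(uz)=\lambda(z)$ for units $u$, and you supply additional justification (reducedness of $B_\m$, finiteness of $\ell$) that the paper omits.
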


\begin{proof} (1) This is because $\m^n A_\m\cap A=\m^n$ for all $n\ge 0$. 

 (2) Clearly $\lambda_\m (B)\le \lambda_{\m A_\m} (B_\m)$.
 Let $z$ be a generator of $B_\m$ as $A_\m$-algebra.
 Let $B=A[y]$. 
  Then there exists $u\in A_\m^*$, $v\in A_\m$ such that  $y=uz+v$.
  Approximate $v$ by a sequence $v_n\in A$: $v=v_n+\alpha_n$
  with $\alpha_n\in  \m^n A_\m$. Then
  $B=A[y-v_n]$ and we have,  if $n$ is big enough,  
$$\lambda_{\m A_\m} (z)=\lambda_{\m A_\m} (uz)=\lambda_{\m A_\m}(y-v_n)=
\lambda_\m (y-v_n) \le \lambda_\m(B).$$
This proves the inverse inequality. 
\end{proof}

\begin{proposition}\label{lambda-f} Let $A, \m$ and $B$ be as in
  Definition~\ref{def-lambda}, let  $k=A/\m$.
Let   $\Delta_{B/A}=(a^2-4b)A$ be the discriminant ideal. 
  \begin{enumerate}[{\rm (1)}]
  \item We have $\ord_\m(\Delta_{B/A})=0$ if and only if $A\to B$ is
    \'etale at $\m$.
 \item We have $\lambda_\m(B)\le \ord_\m(\Delta_{B/A})$,
 with equality when $\chara(k)\ne 2$. 
\item The following properties are equivalent:
  \begin{enumerate}[\rm (a)] 
  \item  $\lambda_\m(B)=+\infty$,
  \item $\chara(A)=2$, and for any generator $y$ we have $a=0$ and $b$ is a square in the $\m$-adic
    completion $\hat{A}_\m$ of $A$.
  \item  $B\otimes_A \hat{A}_\m$ is non-reduced.  
  \end{enumerate}
In particular, if $A$ is excellent, then $\lambda_\m(B)$ is finite. 
  \end{enumerate}  
\end{proposition}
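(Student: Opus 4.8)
The plan is to reduce to the complete local case and then split according to the characteristic of the residue field; parts (1) and (2) are quick, and all the real content is in (3).

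\emph{Reduction and parts (1)--(2).} By Lemma~\ref{lambda-local} I may assume $(A,\m)$ is a regular local domain. Write $f(T)=T^2+aT+b$, so $B=A[y]$ with $f(y)=0$, and recall $\Omega_{B/A}=B/(f'(y))B$ with $f'(y)=2y+a$, together with the identity $(2y+a)^2=a^2-4b$ in $B$. For (1): since $A\to B$ is finite flat, it is \'etale at $\m$ iff it is unramified at $\m$, i.e.\ iff $f'(y)=2y+a$ is invertible in the semilocal ring $B_\m$; squaring (and using $\m B\subseteq\mathrm{rad}(B)$ for the converse) this holds iff $a^2-4b\in B_\m^*$, hence — being an element of $A_\m$ and $A_\m\to B_\m$ being faithfully flat — iff $a^2-4b\in A_\m^*$, i.e.\ iff $\ord_\m(\Delta_{B/A})=0$. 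For (2): the ultrametric inequality together with $\ord_\m(4)\ge 0$ gives, for every generator $y$,
\[
\ord_\m(\Delta_{B/A})\ \ge\ \min\{\,\ord_\m(a^2),\ \ord_\m(4b)\,\}\ =\ \min\{\,2\ord_\m(a),\ \ord_\m(4)+\ord_\m(b)\,\}\ \ge\ \lambda_\m(y),
\]
and taking the supremum over $y$ yields $\lambda_\m(B)\le\ord_\m(\Delta_{B/A})$. If $\chara(k)\ne 2$ then $2\in A^*$, and $y':=y+a/2$ is a generator with $a'=0$ and $b'=-(a^2-4b)/4$; since $B$ is reduced we have $a^2\ne 4b$ (otherwise $B\otimes_A\Frac(A)$ would carry a nilpotent), so $\lambda_\m(y')=\ord_\m(b')=\ord_\m(\Delta_{B/A})<\infty$, which gives the reverse inequality.

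\emph{Part (3), passage to the completion.} I first claim $\lambda_\m(B)=\lambda_{\hat\m}(\hat B)$, where $\hat B:=B\otimes_A\hat A_\m$ and the right-hand side is read off the same formula $\sup_z\min\{2\ord(a_z),\ord(b_z)\}$ over generators $z$ of $\hat B$ over $\hat A_\m$ — a formula which still makes sense when $\hat B$ is non-reduced, and which only needs the \emph{inequality} of (2) from now on. The inequality $\le$ is immediate ($\ord$ is unchanged by completion and a generator of $B$ is one of $\hat B$); for $\ge$, given $v\in\hat A_\m$ one approximates it by $c_n\in A$ with $v-c_n\in\hat\m^{\,n}$ and checks that $\ord(a_{y+c_n})$, $\ord(b_{y+c_n})$ agree with $\ord(a_{y+v})$, $\ord(b_{y+v})$ modulo $\hat\m^{\,n}$, so $\lambda_{\hat\m}(y+v)\le\lambda_\m(B)$ for all $v$. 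Hence I may assume $A$ complete (so $\hat B=B$). Now: if $2\in A^*$, or if $\chara(A)=0$ and $2\in\m$, then $\chara\Frac(A)\ne 2$ and $B$ reduced force $a^2\ne 4b$, so $\ord_\m(\Delta_{B/A})<\infty$ and the bound of (2) gives $\lambda_\m(B)<\infty$; moreover $B$ is reduced and $\chara(A)\ne 2$, so (a), (b), (c) are all false and there is nothing to prove.

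\emph{Part (3), the characteristic $2$ case, and finiteness.} Suppose $\chara(A)=2$. A direct computation shows a general generator $z=uy+v$ ($u\in A^*$, $v\in A$) has $a_z=ua$ and $b_z=v^2+uav+u^2b$; in particular the condition ``$a=0$'', and — granting it — the condition ``$b$ is a square in $A$'', are independent of the chosen generator. If $a\ne 0$ then $\Delta_{B/A}=a^2A\ne 0$, so $\lambda_\m(B)\le\ord_\m(\Delta_{B/A})=2\ord_\m(a)<\infty$, while $B=A[y]/(y^2+ay+b)$ is reduced (the quadratic is separable) and the squareness condition fails, so once more (a), (b), (c) are all false. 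If $a=0$ then $\lambda_\m(y+v)=\ord_\m(b+v^2)$ and $B=A[y]/(y^2+b)$: if $b=\beta^2$ with $\beta\in A$ then $\ord_\m(b+v^2)=2\ord_\m(v-\beta)\to\infty$ as $v\to\beta$, so $\lambda_\m(B)=\infty$ and $B$ is non-reduced; conversely if $\ord_\m(b+v_k^2)\to\infty$ then $(v_k)$ is $\m$-adically Cauchy, hence converges (completeness) to some $\beta$ with $\beta^2=b$, and $B$ is non-reduced. Combining the cases settles (a)$\iff$(b)$\iff$(c). Finally, if $A$ is excellent then $A\to\hat A_\m$ is a regular morphism, hence so is its base change $B\to\hat B$; since $B$ is reduced and reducedness is preserved under regular base change, $\hat B$ is reduced, so (c) fails and $\lambda_\m(B)<\infty$.

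\emph{Main obstacle.} Everything before part (3) is routine; the delicate point is the last case, $\chara(A)=2$ with $a=0$, where $\lambda_\m(B)=\infty$ genuinely can happen for a reduced $B$ precisely when $b$ becomes a square only after completion. The crux is to treat the possibly non-reduced $\hat B$ via the \emph{formula} for $\lambda$ (not Definition~\ref{def-lambda} literally), to use completeness of $\hat A_\m$ to extract $\sqrt b$, and to use stability of reducedness under the regular morphism $A\to\hat A_\m$ for the finiteness statement under excellence.
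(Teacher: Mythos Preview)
Your argument is correct and rests on the same computations as the paper's, but you organize part~(3) differently: you first establish $\lambda_\m(B)=\lambda_{\hat\m}(\hat B)$ by approximation and then run a case split on $\chara(A)$ and on whether $a=0$, whereas the paper proves the cyclic chain (a)$\Rightarrow$(b)$\Rightarrow$(c)$\Rightarrow$(a) directly, invoking Lemma~\ref{non-reduced} for the last implication. Your route is a bit more self-contained (you reprove the relevant piece of Lemma~\ref{non-reduced} inline) at the cost of a longer case analysis; the paper's cyclic argument is shorter but leans on that auxiliary lemma.

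One presentational point worth tightening: after you write ``I may assume $A$ complete (so $\hat B=B$)'' you continue to invoke ``$B$ reduced'', but reducedness of $\hat B$ is not part of the hypotheses --- indeed it is exactly condition~(c). In the two places you use it ($\chara\Frac(A)\ne 2$, and $\chara(A)=2$ with $a\ne 0$) reducedness of $\hat B$ does hold, because $T^2+aT+b$ is separable over $\Frac(\hat A_\m)$ there; it would be cleaner to say this explicitly rather than appeal to the original hypothesis after the reduction.
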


\begin{proof} (1) is clear.

  (2)   We have 
  $$\ord_\m(\Delta_{B/A})=\ord_\m(a^2-4b)\ge \min\{ \ord_\m(a^2), \ord_\m(b) \}
=\lambda_\m(y).$$
  Thus $\lambda_\m(B)\le \ord_\m(\Delta_{B/A})$.
  When $\chara(k)\ne 2$,  by Lemma~\ref{lambda-local} 
  we can suppose $2\in A^*$. Then $B=A[y+a/2]$ and 
$\ord_\m(\Delta_{B/A})=\lambda_\m(y+a/2)\le \lambda_\m(B)$ which implies the equality.  

  (3) We can suppose $A$ is local.

(a) $\Longrightarrow$ (b) Suppose that $\lambda_\m(B)=+\infty$. Then there exists a sequence 
$(c_n)_n$ in $A$ such that $\lambda_\m(y+c_n)\ge 2n$. Thus
  $$a-2c_n\in \m^n, \quad b-ac_n+c_n^2\in \m^{2n}.$$
  If $\chara(A)\ne 2$, then $c_n$ converges to some $\hat{c}\in \hat{A}_\m$ and we have
  $a=2\hat{c}$.   As $A\to \hat{A}_\m$ is faithfully flat, $a\in A\cap 2\hat{A}_\m=2A$. Thus
  $\hat{c}=c\in A$. Then $b=c^2$ and $(y-c)^2=0$, contradiction
  because $B$ is reduced by hypothesis. 
  Therefore $\chara(A)=2$. Then $a\in \cap_n \m^n=\{ 0\}$ and $b+c_n^2\in \m^{2n}$.  This implies that $c_n$ converges to some $\hat{c}\in \hat{A}_\m$ with $\hat{c}^2=b$.

  (b) $\Longrightarrow$ (c) is clear.  Let us prove (c)
  $\Longrightarrow$ (a).  By Lemma~\ref{non-reduced}(1) below,
    there exists  $\hat{\alpha}\in \hat{A}_\m$ 
    such that $(y+\hat{\alpha})^2=0$.
   If $(\alpha_n)_n$ is a sequence in $A$ convergent to $\hat{\alpha}$, then
   $\lambda_\m(y+\alpha_n)$ tends to $+\infty$.

Finally, if $A$ is excellent then so is $B$, hence $B\otimes_{A}
\hat{A}_\m$ is reduced. 
\end{proof}

\begin{lemma}\label{non-reduced} Let $R$ be an integral normal
  domain, let $D=R[T]/(T^2+aT+b)$.
  \begin{enumerate}[\rm (1)] 
  \item The ring $D$ is not integral if and only if 
  $T^2+aT+b$ is reducible in $R[T]$.   
  \item  The ring $D$ is non-reduced if and only if $T^2+aT+b=(T+c)^2$ for some $c\in A$. 
  \end{enumerate}
\end{lemma}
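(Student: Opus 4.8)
The plan is to pass everything to the fraction field $K=\Frac(R)$. Since $T^2+aT+b$ is monic of degree $2$, the ring $D$ is free of rank $2$ over $R$ with basis $1,T$; in particular it is torsion free, so the localization map $D\hookrightarrow D\otimes_R K=K[T]/(T^2+aT+b)$ is injective, and moreover $T+c\ne 0$ in $D$ for every $c\in R$. These two remarks will do most of the work.

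For part (1), the ``if'' direction is immediate: if $T^2+aT+b$ is reducible in $R[T]$ then, being monic of degree $2$, it equals $(T+c)(T+d)$ with $c,d\in R$; in $D$ the product $(T+c)(T+d)$ vanishes while neither factor does, so $D$ has zero divisors. For the converse I would argue by contraposition. If $T^2+aT+b$ is irreducible in $K[T]$, then $D\otimes_R K$ is a field, hence its subring $D$ is a domain. So if $D$ is not integral, $T^2+aT+b$ is reducible over $K$; being monic of degree $2$ it then has a root $\alpha\in K$, which satisfies the monic relation $\alpha^2+a\alpha+b=0$, hence is integral over $R$, hence lies in $R$ because $R$ is normal. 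The complementary root $-a-\alpha$ also lies in $R$, so $T^2+aT+b=(T-\alpha)(T+a+\alpha)$ is reducible in $R[T]$.

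For part (2) (where the ``$c\in A$'' in the statement should read $c\in R$), ``if'' is again immediate: if $T^2+aT+b=(T+c)^2$ then $(T+c)^2=0$ in $D$ with $T+c\ne 0$, so $D$ is non-reduced. Conversely, a non-reduced $D$ is in particular not integral, so by part (1) we may write $T^2+aT+b=(T+c)(T+d)$ with $c,d\in R$; it remains to show $c=d$. If $c\ne d$, then $T+c$ and $T+d$ are coprime in $K[T]$ (their difference is the nonzero constant $d-c$), so the Chinese remainder theorem gives $D\otimes_R K\cong K\times K$, which is reduced; since $D$ embeds in it, $D$ is reduced, a contradiction. Hence $c=d$ and $T^2+aT+b=(T+c)^2$.

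The whole argument is routine; the only points requiring a little care are the injectivity $D\hookrightarrow D\otimes_R K$ (which rests on the $R$-freeness of $D$) and the descent of a root from $K$ to $R$ via normality of $R$. I do not foresee any real obstacle.
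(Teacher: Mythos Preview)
Your proof is correct and follows essentially the same approach as the paper: both pass to $K=\Frac(R)$ via the injection $D\hookrightarrow D\otimes_R K$, factor over $K$, and then descend the roots to $R$ using normality. Your treatment of (2) via part (1) plus the Chinese remainder theorem is a minor variant of the paper's direct argument (non-reduced $D$ forces non-reduced $D\otimes_R K$, hence a double root in $K$, hence in $R$), but the underlying idea is the same.
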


\begin{proof} The ``if'' part in both statements are obvious. Let us prove the
  converse. Let $K=\Frac(R)$. As $D$ is flat over $R$, $D\to D\otimes_R K$
  is injective. If $D$ is non-integral, then $D\otimes_R K$ is non-integral and we have 
  $T^2+aT+b=(T+c_1)(T+c_2)$ for some  $c_1, c_2\in K$. This implies that the
  $c_i$'s are integral over $R$, hence belong to $R$. The proof is similar for
  the non-reduced case. 
  \end{proof} 

  \begin{remark} Keep the notation of Proposition~\ref{lambda-f}. We
    have $\Delta_{B/A}=0$ if and only if $B$ is integral and
    $\Frac(B)/\Frac(A)$ is an inseparable extension.     
  \end{remark}
  
\begin{proposition}\label{reg-lambda} Keep the notation of
  Definition~\ref{def-lambda}.
  \begin{enumerate}[\rm (1)] 
  \item The Artinian ring $B/\m B$ is reduced if and only if 
    $\lambda_\m(B)=0$. If $k:=A/\m$ is perfect or of $\chara(k)\ne 
    2$, then this is equivalent to $A\to B$ \'etale above $\m$. 
  \item The ring $B$ is regular at all maximal ideals lying over $\m$ 
    if and only if $\lambda_\m(B)\le 1$.
   \item   If $\lambda_\m(B)\ge 1$, then there is only one maximal
     ideal $\n\subset B$ lying over $\m$, $A/\m\to B/\n$ is an
     isomorphism and we have $B_\n=B_\m$.
  \end{enumerate}
   \end{proposition}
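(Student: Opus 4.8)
The plan is to localize once and for all, assuming $A=A_\m$ is local with residue field $k$ (legitimate by Lemma~\ref{lambda-local}), so that $B=B_\m$ is finite over $A$ and $B/\m B=\bar B:=k[T]/(\bar f)$ with $\bar f=T^2+\bar aT+\bar b$ the reduction of the equation of a generator $y$. Everything will be read off from the fibre algebra $\bar B$, using the elementary fact that, over a field $k$, a monic quadratic $\bar f$ gives a non-reduced quotient $k[T]/(\bar f)$ exactly when $\bar f=(T+\bar c)^2$ for some $\bar c\in k$, and otherwise $k[T]/(\bar f)$ is either a field or isomorphic to $k\times k$. I would prove (3) before (2), since (2) uses it.

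For (1), the key point is that $\lambda_\m(B)\ge 1$ is equivalent to the existence of $\bar c\in k$ with $\bar a=2\bar c$ and $\bar b=\bar c^2$: rewriting the equation satisfied by the generator $y+c$ shows $\lambda_\m(y+c)\ge 1$ means precisely $a-2c\in\m$ and $c^2-ac+b\in\m$, and any $\bar c\in k$ lifts to $A$. That condition says exactly $\bar f=(T+\bar c)^2$, i.e. $\bar B$ is non-reduced; hence $\lambda_\m(B)=0\iff\bar B$ reduced. For the \'etale statement, since $A\to B$ is already flat it is \'etale above $\m$ iff $\bar B$ is a separable $k$-algebra, which forces $\bar B$ reduced; conversely, if $\chara(k)\neq 2$ one completes the square to get $\bar B\cong k[S]/(S^2-\delta)$, so $\bar B$ reduced means $\delta\neq 0$ and $\bar B$ is \'etale, while if $k$ is perfect every finite reduced $k$-algebra is a product of separable field extensions. (The discrepancy between ``reduced'' and ``\'etale'' is caused exactly by inseparable quadratic field extensions of $k$, which both hypotheses exclude.)

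For (3), assume $\lambda_\m(B)\ge 1$; by the computation in (1) there is a generator $y$ with $a,b\in\m$, so $\bar B=k[T]/(T^2)$ is a local Artinian ring with residue field $k$. As $B$ is finite over $A$, the maximal ideals of $B$ over $\m$ correspond to those of $\bar B$, so there is a unique such $\n$, and $A/\m\to B/\n=k$ is an isomorphism; consequently $B_\m$ is local with maximal ideal $\n B_\m$, whence $B_\n=B_\m$.

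For (2): if $\lambda_\m(B)=0$, then $\bar B$ is a product of fields, so for each maximal ideal $\n$ over $\m$ one has $\n B_\n=\m B_\n$, generated by the $d:=\dim A_\m$ elements of a regular system of parameters of $A$, and $\dim B_\n=d$, so $B_\n$ is regular. If $\lambda_\m(B)=1$, the supremum (a finite positive integer) is attained, so some generator $y$ has $\min\{2\ord(a),\ord(b)\}=1$, forcing $a\in\m$ and $b\in\m\setminus\m^2$; then $b$ is part of a regular system of parameters of $A$, and in $B_\n=B=A\oplus Ay$ the identity $b=-y(y+a)$ shows $\n B_\n=\m B_\n+(y)$ is generated by $d$ elements, so $B_\n$ is regular of dimension $d$. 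Conversely, if $\lambda_\m(B)\ge 2$ (the value $+\infty$ included), some generator has $a\in\m$ and $b\in\m^2$, and a short computation using the free $A$-basis $\{1,y\}$ and the relation $y^2=-ay-b$ gives $\n^2 B_\n\subseteq\m^2 A\oplus\m Ay$; since the $y$-coordinate of $y$ is a unit, $y\notin\n^2 B_\n$, so $\dim_k\n B_\n/\n^2 B_\n\ge d+1>\dim B_\n$ and $B_\n$ is not regular. I expect this last comparison to be the main technical step: one must pin down $\n^2 B_\n$ precisely inside the decomposition $B=A\oplus Ay$ and check that $y$ genuinely enlarges the cotangent space, which is what separates $\lambda_\m(B)\le 1$ from $\lambda_\m(B)\ge 2$.
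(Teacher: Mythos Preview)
Your proof is correct and follows essentially the same approach as the paper: localize, read everything off the fibre $k[T]/(T^2+\bar aT+\bar b)$, and for (2) use that the relation $y^2+ay+b=0$ forces $b\in\n^2$, so regularity of $B_\n$ hinges on whether $b\in\m^2$. Your treatment of (1) is slightly more uniform (characteristic-free via $\bar f=(T+\bar c)^2$) where the paper splits into the cases $\ord_\m(a)=0$ and $\ord_\m(a)>0$ under $\chara(k)=2$, and for $B_\n=B_\m$ in (3) you argue directly that $B_\m$ is local while the paper invokes the norm $N_{B/A}$; these are cosmetic differences.
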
 

\begin{proof} By Lemma~\ref{lambda-local} we can suppose $A$ is local.
  Suppose $\chara(k)=2$ (otherwise the proof is easier). 
  Let $\lambda_\m(y)=\lambda_\m(B)$.

If $\lambda_\m(B)=0$ and $\ord_\m(a)=0$, then $A\to B$ is \'etale and
$B$ is regular.
Suppose $\lambda_\m(B)=0$ and $\ord_\m(a)>0$. Then 
$B/\m B \simeq k[T]/(T^2+\bar{b})$. If $T^2+\bar{b}\in k[T]$ is reducible,
there exists $\alpha\in A^*$ such that $b-\alpha^2 \in \m$.
This implies that $\lambda_\m(B)\geq \lambda_\m(y+\alpha)>0$.
Contradiction.  So  $B/\m B$ is a radicial quadratic extension of $k$   
and $B$ is regular. 
  
If $\lambda_\m(B)\geq 1$, then  $a, b\in \m$ and $B/\m B\simeq
k[T]/(T^2)$. This implies that $B$ is local with maximal ideal $\n$,
not \'etale over $A$, and we have $k\simeq B/\n$. 
The equality $B_\n=B_\m$ comes
from the fact that for any $u\in B\setminus \n$, the norm $N_{B/A}(u)\in
A\setminus \m$.

Finally when $\lambda_\m(B)\geq 1$, we have $y\in \n$ and the
relation $y^2+ay+b=0$ implies that $B$ is regular if and only if 
$b\notin \m^2$, equivalently, $\lambda_\m(B)=1$.
 \end{proof}

 \subsection{Algorithm for determining the
   multiplicity \texorpdfstring{$\lambda_p(Y)$}{lambdap(Y)}}

We now discuss how to find a generator $y$ of $B$ reaching 
$\lambda_\m(B)$.  We can restrict to  $\chara(k)=2$ by
Proposition~\ref{lambda-f}. 
The lemma below is a generalization of \cite{LRN}, Lemma 3.3(1-2).
It gives an algorithm to finding a generator  $y$ of $B$ with
$\lambda_\m(B)=\lambda_\m(y)$.

\begin{lemma} \label{lambdaz} Let $A, \m, B$ be as in
  Definition~\ref{def-lambda} with $\chara(A/\m)=2$. 
  We omit $\m$ from the subscripts of $\ord$ and $\lambda$.  
\begin{enumerate}[\rm (1)] 
\item If $2\ord(a)\le \ord(b)$, then $\lambda(B)=\lambda(y)$. 
\item Suppose $2\ord(a)> \ord(b)$. Let $\tilde{b}\in \Gr_\m(A)$ be
  the initial form of $b$ (see Definition~\ref{def-initf}). 
  \begin{enumerate}
\item If  $\tilde{b}$ is not a square in $\Gr_\m(A)$, then 
$\lambda(B)=\lambda(y)$.    
 \item Suppose $\tilde{b}$ is a square in $\Gr_\m(A)$: 
    there exists $c\in A$ such that $b-c^2\in \m^{\ell+1}$ where
    $\ell=\ord(b)$. Then 
    \[ \lambda(y+c)>\lambda(y).\] 
    \end{enumerate}
\end{enumerate} 
\end{lemma}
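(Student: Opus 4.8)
The plan is to reduce everything to the explicit behaviour of $\lambda$ under a translation of the generator. By the observation at the end of Definition~\ref{def-lambda} one has $\lambda(B)=\sup_{c\in A}\lambda(y+c)$, and substituting $y=w-c$ into $y^2+ay+b=0$ shows that $w=y+c$ satisfies
\[
w^2+(a-2c)\,w+(b-ac+c^2)=0,
\]
so that $\lambda(y+c)=\min\{2\ord(a-2c),\ \ord(b-ac+c^2)\}$. Write $\alpha=\ord(a)$ and $\beta=\ord(b)$, so $\lambda(y)=\min\{2\alpha,\beta\}$. The only arithmetic input beyond the valuation axioms for $\ord$ (additivity, and $\ord(x+x')\ge\min\{\ord(x),\ord(x')\}$ with equality when the two orders differ) and the fact that $\Gr_\m(A)$ is a domain will be that $\ord(2)\ge 1$, which holds since $\chara(A/\m)=2$ (with the convention $\ord(2)=+\infty$ when $\chara(A)=2$).

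For (1) and (2a) it suffices to show $\lambda(y+c)\le\lambda(y)$ for every $c\in A$, since then the supremum defining $\lambda(B)$ is reached at $y$. I would run a short case distinction on $\ord(c)$. When $\ord(c)$ is small, the monomial $c^2$ strictly dominates $ac$ and $b$ in $b-ac+c^2$, forcing $\ord(b-ac+c^2)=2\ord(c)<\lambda(y)$; when $\ord(c)$ is large, the term $a$ strictly dominates $2c$ in $a-2c$ (using $\ord(2c)>\ord(a)$), forcing $2\ord(a-2c)=2\alpha$. In case (1) (where $2\alpha\le\beta$) these two regimes, separated by $\ord(c)\lessgtr\alpha$, already exhaust all $c$. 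In case (2a) (where $2\alpha>\beta$) there is a single borderline value $\ord(c)=\beta/2$ — possible only when $\beta$ is even — at which $c^2$ and $b$ have the same order $\beta$; this is the one place where the hypothesis that $\tilde b$ is not a square is needed. There $\alpha+\ord(c)>\beta$, so the $ac$ term lies in $\m^{\beta+1}$, hence the degree-$\beta$ part of the initial form of $b-ac+c^2$ is $\tilde b+\tilde c^{\,2}$, where $\tilde c$ is the initial form of $c$; since $\Gr_\m(A)$ has characteristic $2$ this equals $\tilde b-\tilde c^{\,2}$, which is nonzero exactly because $\tilde b$ is not the square $\tilde c^{\,2}$. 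Thus $\ord(b-ac+c^2)=\beta$ and $\lambda(y+c)\le\beta=\lambda(y)$. I expect this borderline case to be the only real obstacle; everything else is bookkeeping with the valuation.

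For (2b) I would argue directly with the given $c$. Since a nonzero square in the graded domain $\Gr_\m(A)$ has even degree, $\beta=\ell$ is even and one may take $\ord(c)=\beta/2$. From $2\alpha>\beta$ and integrality, $\alpha\ge\beta/2+1$, so $\ord(ac)=\alpha+\beta/2\ge\beta+1$; together with $\ord(b-c^2)\ge\beta+1$ (the hypothesis) and $\ord(2c^2)\ge\beta+1$ this yields $\ord(b-ac+c^2)\ge\beta+1$. Likewise $\ord(a-2c)\ge\beta/2+1$, using $\ord(a)\ge\beta/2+1$ and $\ord(2c)\ge\beta/2+1$, so $2\ord(a-2c)\ge\beta+2$. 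Hence $\lambda(y+c)\ge\beta+1>\beta=\lambda(y)$, which is the assertion.
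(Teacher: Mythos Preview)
Your approach is correct and is essentially the paper's own (the paper defers (1) to \cite{LRN}, argues (2a) by contrapositive, and handles (2b) via the identity $b+ac+c^2=(b-c^2)+(a+2c)c$, but the underlying computations are the same as yours).

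There is, however, one expository slip in your case (2a). The ``large $\ord(c)$'' argument you announce---bounding $\lambda(y+c)$ via $2\ord(a-2c)=2\alpha$---is useful only in case (1), where $\lambda(y)=2\alpha$; in case (2a) one has $\lambda(y)=\beta<2\alpha$, so that bound says nothing. Thus the range $\beta/2<\ord(c)<\alpha$ is not covered by what you wrote. The fix is immediate and in the spirit of your borderline analysis: for $\ord(c)>\beta/2$ one has $\ord(c^2)>\beta$ and $\ord(ac)=\alpha+\ord(c)>\beta$, so $b$ has strictly smallest order among the three summands and $\ord(b-ac+c^2)=\beta$, giving $\lambda(y+c)\le\beta=\lambda(y)$.
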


\begin{proof} Up to multiplication by a unit of $A$, any other basis
  $\{ 1, z\}$ of $B$ over $A$ is given by $y=z+c$ for some $c\in
  A$. The equation of $z$ is
  $$
z^2+(a+2c)z+(b+ac+c^2)=0.  
$$
Using the fact that $\ord$ is a valuation on $\Frac(A)$, the proof of
(1) is similar to that of Lemma 3.3(1) in \cite{LRN}.

(2.a) If $\lambda(B)>\lambda(y)$, there exists   
$c\in A$ such that
\[ \ord(b+ac+c^2)>\lambda(y)=\ord(b).\]
As
$2\ord(a)>\ord(b)$, this implies that $\ord(b)=2\ord(c)$ and
$\ord(b+c^2)>\ord(b)$. This is equivalent to the initial form of
$b$ being  a square in $\Gr_\m(A)$. 

(2.b) We have $\ord(b)=2\ord(c)$. Moreover
\[2\ord(a+2c)>\ord(b), \quad 
  \ord(b+ac+c^2)=\ord(b-c^2+(a+2c)c)> \ord(b). 
  \] 
  Hence $\lambda(y+c)>\ord(b)=\lambda(y)$.
\end{proof}

\begin{remark}\label{lambda-Delta} If $\chara(A/\m)=2$, and $\Delta_{B/A}\ne 0$,
  then $\lambda_\m(B) = \ord_\m(\Delta_{B/A})$ if and only if for some
  generator $y$ of $B$ we have $2\ord_\m(a)\le \ord_\m(b)$. 
This then implies that $\lambda_\m(B)=\lambda_\m(y)$.  
\end{remark}

\begin{example} Let $A=\Z[x]$. Let
  $$B=\Z[x,y]/(y^2+x^5-1).$$
and let $\m\subset A$ be a maximal ideal. 
\begin{enumerate}[{\rm (1)}] 
\item   As $B\otimes_\Z \mathbb Q$ is regular,  we have $\lambda_\m(B)\le 1$ 
  if $\m\cap \Z=\{ 0\}$. 
\item  Suppose now that $\m\cap \Z=\ell \Z$ for some prime number $\ell$.
There exists a monic $f(x)\in \Z[x]$ such that $f(x)$ is irreducible
in $\mathbb F_\ell[x]$ and $\m=(\ell, f(x))$.
\begin{enumerate}
\item 
If $\ell\ne 2, 5$, then $\lambda_\m(B)\le 1$, with equality if and
only if $f(x) \mid x^5-1$ in $\mathbb F_\ell[x]$. 
\item 
If $\ell = 5$, then $\lambda_\m(B) = 0$ if $f(x)$ is prime to $x-1$
in $\mathbb F_5[x]$.
Let $\m_{5}:=(5, x-1)$.
Then
$\lambda_{\m_5}(B)=\ord_{\m_5}(x^5-1)=2$. 
\item 
Let $\ell=2$. Let $z=y-1\in B$. Then $z^2+2z+x^5=0$.  Consider
$\m_2=(2, x)$. By Lemma~\ref{lambdaz}(1), we have
$\lambda_{\m_2}(B)=2\ord_{\m_2}(2)=2$. 
\item Let $\ell=2$ and $\m\ne \m_2$, then $\lambda_{\m}(B)=1$.
  Indeed,  the curve $z^2+x^5$ over $\mathbb F_2$ is smooth away
  from the point $x=z=0$, hence $\Spec B\to \Spec \Z$ is smooth at
  $\m$ and $B_\m$ is regular. Thus   $\lambda_{\m}(B)=1$ (it is
  non-zero by Proposition~\ref{reg-lambda}(2)).   
\end{enumerate}
\end{enumerate} 
\end{example}

\begin{proposition}\label{lambda-et} Let $(A, \m)\to (A', \m') $ be an
  \'etale extension of regular local rings.  Then
    $$\lambda_\m(B)=\lambda_{\m'}(B\otimes_A  A').$$      
\end{proposition}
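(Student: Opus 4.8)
The plan is to reduce to the completions and then transport the defining equation along the étale map. First I would invoke Lemma~\ref{lambda-local} to replace $\lambda_\m(B)$ by $\lambda_{\m A_\m}(B_\m)$ and likewise for $\lambda_{\m'}(B\otimes_A A')$, so that we may assume $(A,\m)$ and $(A',\m')$ are local. Since $(A,\m)\to(A',\m')$ is étale and local, it induces an isomorphism on residue fields' purely inseparable closures only in general; but what matters is that the induced map on $\m$-adic completions $\hat A_\m \to \widehat{A'}_{\m'}$ is again faithfully flat, and — this is the key algebraic input — $\Gr_{\m}(A) \to \Gr_{\m'}(A')$ is the base change $\Gr_\m(A)\otimes_{k} k' $ where $k=A/\m$, $k'=A'/\m'$, because an étale map induces an isomorphism $\m A' \xrightarrow{\sim} \m'$ on the level of $\m$-adic filtrations (indeed $\m A' = \m'$ and the associated graded is obtained by the flat base change $k\to k'$). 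Consequently $\ord_{\m'}$ restricted to $A$ agrees with $\ord_\m$, and more generally for $a\in A$ one has $\ord_{\m'}(a) = \ord_\m(a)$.

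Next I would fix a generator: write $B = A[y]$ with $y^2+ay+b=0$, so that $B\otimes_A A' = A'[y]$ with the \emph{same} equation viewed in $A'$. Then $\lambda_{\m'}(y) = \min\{2\ord_{\m'}(a),\ord_{\m'}(b)\} = \min\{2\ord_\m(a),\ord_\m(b)\} = \lambda_\m(y)$ by the previous paragraph, and taking the supremum over $c\in A$ gives immediately $\lambda_\m(B) \le \lambda_{\m'}(B\otimes_A A')$, since every $y+c$ with $c\in A$ is also a generator of $B\otimes_A A'$. The real content is the reverse inequality: an arbitrary generator of $B\otimes_A A'$ has the form $uz+v$ with $u\in (A')^*$, $v\in A'$, so after clearing the unit we must control $\lambda_{\m'}(y+v)$ for $v\in A'$, not merely $v\in A$.

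For the reverse inequality I would separate cases according to Proposition~\ref{lambda-f}(3). If $\chara(k)\ne 2$, then by Proposition~\ref{lambda-f}(2) both sides equal the order of the discriminant ideal, and $\ord_\m(\Delta_{B/A}) = \ord_{\m'}(\Delta_{B/A}A')$ by the order-preservation above, so we are done. If $\chara(k)=2$, I would use the algorithm of Lemma~\ref{lambdaz}: the value $\lambda_\m(B)$ is detected either by the condition $2\ord(a)\le\ord(b)$ for a suitable generator (in which case $\lambda_\m(B) = \lambda_\m(y)$ is intrinsic and manifestly unchanged by base change), or by the condition that, after finitely many replacements $y\mapsto y+c$, the initial form $\tilde b\in\Gr_\m(A)$ of $b$ fails to be a square with $2\ord(a)>\ord(b)$; since squares in $\Gr_\m(A) = k[x_1,\dots,x_d]$ base-change to squares in $\Gr_{\m'}(A') = k'[x_1,\dots,x_d]$ and, conversely, a form over $k$ that becomes a square over $k'$ — where $k\to k'$ is separable, as the residue extension of an étale local map — must already be a square over $k$ (a form $F$ over $k$ which is a square over a separable extension is a square over $k$, because if $F = G^2$ with $G\in k'[x]$ then the coefficients of $G$ generate a purely inseparable extension of the subfield generated by those of $F$, which inside the separable extension $k'/k$ forces $G\in k(\text{those coefficients})$, hence eventually $G\in k[x]$ after the usual descent). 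Thus the stopping criterion of the algorithm is insensitive to the étale base change, and the two supremums coincide.

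The main obstacle I anticipate is precisely this last descent statement: showing that the recursion in Lemma~\ref{lambdaz} terminates "at the same place" over $A$ and over $A'$, i.e. that no new cancellation of the initial form becomes available after base change. This hinges on the separability of the residue field extension $k'/k$ (which holds because $(A,\m)\to(A',\m')$ is étale), used to guarantee that "square over $k'$" implies "square over $k$" for homogeneous forms; once that is in hand, the identification of the two multiplicities is formal. A cleaner route, which I would try first, is to bypass the algorithm entirely: pass to completions and use Proposition~\ref{lambda-f}(3) to reduce the equality to the statement that $b$ is a square in $\widehat{A'}_{\m'}$ if and only if it is a square in $\hat A_\m$ — again a descent along the faithfully flat, residually-separable map $\hat A_\m\to\widehat{A'}_{\m'}$ — together with matching of the finite values via $\ord_\m = \ord_{\m'}$; this localizes the difficulty to a single square-descent lemma rather than tracking an iteration.
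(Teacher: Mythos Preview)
Your main argument is correct and is essentially the paper's own proof: one uses $\Gr_{\m'}(A')\simeq \Gr_\m(A)\otimes_k k'$ to get $\ord_\m=\ord_{\m'}|_A$, disposes of $\chara(k)\ne 2$ via the discriminant, and in $\chara(k)=2$ picks $y$ with $\lambda_\m(y)=\lambda_\m(B)$ and applies Lemma~\ref{lambdaz}, the key point being that a homogeneous element of $\Gr_\m(A)$ which becomes a square after the separable base change $k\to k'$ was already a square over $k$. (Your proposed ``cleaner route'' via completions and Proposition~\ref{lambda-f}(3) only distinguishes $\lambda=+\infty$ from $\lambda<+\infty$ and does not pin down the finite value, so it would not actually bypass the square-descent step; but since your primary argument already handles this, the detour is unnecessary.)
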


\begin{proof} Let $k, k'$ be the respective residue fields of $A, A'$. 
  As $\Gr_\m(A)\to \Gr_{\m'}(A')\simeq \Gr_{\m}(A)\otimes_k k'$ is
  injective, $\ord_\m$ coincides with $\ord_{\m'}$ on $A$. 

Let $y\in B$ such that $\lambda_\m(y)=\lambda_\m(B)$. Then
$B'=A'[y]$.  If $\chara(k)\ne 2$ then $\lambda_{\m'}(B')= 
\ord_{\m'}(\Delta_{B/A}) =\ord_\m(\Delta_{B/A})$. Suppose
$\chara(k)=2$.
By Lemma~\ref{lambdaz},
$\lambda_\m(y)=\lambda_{\m'}(y)=\lambda_{\m'}(B')$ (in the case \ref{lambdaz}(2)
use the fact that if an element of $\Gr_{\m}(A)$ is a square in
$\Gr_{\m}(A)\otimes_k k'$, then it is already a square in $\Gr_{\m}(A)$ because $k'$ is separable over $k$.) 
\end{proof}

\subsection{Multiplicity in the global case}

\begin{definition} \label{double-cov} Let $Z$ be an integral
noetherian excellent regular scheme. Let $\psi : Y\to Z$  be a double cover
(Definition~\ref{def-dc}). 
  Then for any $p\in Y$ with image $q\in Z$, define 
$$
\lambda_p(Y)=\lambda_q(Y)=\lambda_{\m_q}(\psi_*\cO_{Y}\otimes_{\cO_Z} \cO_{Z,q}). 
$$
We define the \emph{discriminant ideal $\Delta_{Y/Z}\subset
  \cO_Z$} as the norm $N_{\psi_*\cO_Y/\cO_Z}$ of the annihilator
ideal of $\Omega^1_{Y/Z}$. It coincides with the usual definition when
$Z=\Spec A$ is affine. 
\end{definition}

\begin{remark}
The multiplicity $\lambda_p(Y)$ is the same for all points $p\in
\psi^{-1}(q)$, so there is no ambiguity to denote it by
  $\lambda_q(Y)$. Moreover, when $\lambda_p(Y)\ge 1$, then
  $\psi^{-1}(q)=\{ p\}$ and $k(q)\to k(p)$ is an isomorphism
  (Proposition~\ref{reg-lambda}(3)).
\end{remark}

\end{section}

\begin{section}{Normalized blowing-up} \label{normal-bl}

For any scheme $Y$ and closed subscheme $Y_0\subset Y$,
we call \emph{the normalized  blowing-up of $Y$ along $Y_0$}
the normalization of the blowing-up of $Y$ along $Y_0$.  
In this section we study the normalized blowing-up of $Y$ at a
singular point when $Y$ is a \emph{normal double cover} $\psi: Y\to Z$ of
an integral noetherian excellent regular surface $Z$.

\subsection{Blowing-up regular schemes along a point} \label{bl-up-reg}

Let $A$ be a ring and let  $\m\subset A$ be a maximal ideal.  We recall below 
the explicit description of the blowing-up 
  $$\bl_\m : Z'\to Z=\Spec A$$
of $Z$ along $\m$. 
Denote by $\widetilde{A}$ the Rees algebra $\oplus_{n\ge 0} \m^n$. For any $s\in \m\setminus \{ 0\}$, denote by $\tilde{s}$ the element $s$ viewed as a homogeneous element of
degree $1$ in $\widetilde{A}$ and by $\Ast$ the homogeneous localization of
$\widetilde{A}$ with respect to the positive powers of $\tilde{s}$. 
Denote by $D_+(\tilde{s})\subseteq Z'$ the principal open subset
defined by $\tilde{s}$. 

\begin{proposition}\label{bl-reg-ufd} Let $A$ be an integral
  noetherian regular ring, let $\m\subset A$ be a maximal ideal 
  of $\dim A_\m=2$. Let $s, t$ be a system of generators of $\m$ and
  denote $k=A/\m$. 
  \begin{enumerate}[\rm (1)] 
  \item We have
    \[ Z'\simeq \Proj A[S, T]/(tS-sT),\] 
$Z'$ is a regular surface, covered by  
    the principal open subsets $D_+(\tilde{s})$ and $D_+(\tilde{t})$.    
    The homomorphism 
\[ A[T_1]/(sT_1-t) \to \Ast=\cO_{Z'}(D_+(\tilde{s})), \quad T_1\mapsto
\tilde{t}/\tilde{s}\] 
of $A$-algebras is an isomorphism, it takes $\m^r\subseteq A$ onto
$s^r\Ast$. 
\item  Let  $E=V(\m \cO_{Z'})\subset Z'$ be the exceptional divisor.
Then
\[\cap D_+(\tilde{s})=V(s\Ast)\simeq \Spec k[\bar{t}_1]\] 
where $\bar{t}_1$ is the images of
$t_1:=\tilde{t}/\tilde{s}$ modulo $s$. 
 Any  maximal ideal of $\Ast$ containing $s$ is generated by $s$ and
 one other element.
 \item  Let $\nu_E$ be the normalized discrete valuation on $\Frac(A)$
associated to the generic point  
of $E$.  Then $\nu_E(\alpha)=\ord_\m(\alpha)$ for all $\alpha\in \Frac(A)$.   
 \item If $A$ is factorial  then so is $\Ast$. 
\item Let $f\in A$ with $\ord_\m(f)=n\ge 1$. 
  \begin{enumerate}
    \item Let $V(f)'$ be the strict transform of $V(f)$ in $Z'$.   
      Then  we have 
  $V(f)'\cap D_+(\tilde{s})=V(f/s^n)$.  
\item The image of $f/s^n \in \Ast$ in
  $\Ast/(s)\simeq k[\bar{t}_1]$ has degree at most equal to $n$.
  \end{enumerate}
  \end{enumerate}
\end{proposition}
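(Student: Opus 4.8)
The strategy is to reduce everything to the single chart $D_+(\tilde s)$, where the Rees algebra is completely explicit, and then verify each assertion by a routine computation in the polynomial-type ring $\Ast$. For (1), the presentation $Z'\simeq \Proj A[S,T]/(tS-sT)$ is the standard description of the blowing-up of $Z=\Spec A$ along $\m=(s,t)$: one has a surjection $A[S,T]\twoheadrightarrow \widetilde A$ sending $S\mapsto \tilde s$, $T\mapsto \tilde t$, whose kernel contains $tS-sT$, and since $s,t$ form a regular sequence in the regular ring $A$ the kernel is exactly $(tS-sT)$ (this is the classical fact that the Rees algebra of an ideal generated by a regular sequence is a complete intersection). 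Then $D_+(\tilde s)=\Spec \Ast$, and on this chart $T/S=\tilde t/\tilde s$ is integral over $A$ with $s\cdot(\tilde t/\tilde s)=\tilde t=t$, giving the surjection $A[T_1]/(sT_1-t)\to\Ast$; injectivity again follows because $s$ is a nonzerodivisor, so $A[T_1]/(sT_1-t)$ is already a domain of the right dimension. Regularity of $Z'$ is then checked chart by chart: $A[T_1]/(sT_1-t)\cong A[T_1]/(sT_1-t)$ is regular because away from $s=0$ it is $A_s[T_1]$-localized (a localization of a polynomial ring over the regular ring $A$), and at a point where $s=0$ one eliminates $t=sT_1$ and sees the completed local ring is a power series ring over $k$ in the images of $s$ and $T_1-c$. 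The claim that $\m^r$ maps onto $s^r\Ast$ is immediate: $\m^r$ is generated by $s^{r-i}t^i$, and $s^{r-i}t^i = s^r(\tilde t/\tilde s)^i$ in $\Ast$.

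For (2), since $E=V(\m\cO_{Z'})$ and $\m\Ast=s\Ast$ by what was just proved, $E\cap D_+(\tilde s)=V(s\Ast)=\Spec(\Ast/s\Ast)$; quotienting the presentation $A[T_1]/(sT_1-t)$ by $s$ kills $t$ as well (since $t=sT_1\equiv 0$) and leaves $(A/\m)[T_1]=k[\bar t_1]$. A maximal ideal of $\Ast$ containing $s$ corresponds to a maximal ideal of $k[\bar t_1]$, hence is generated by $s$ together with one lift of a generator of that maximal ideal of $k[\bar t_1]$. For (3), $E$ is an integral divisor with generic point $\eta_E$, and $\cO_{Z',\eta_E}$ is a DVR with uniformizer $s$; the valuation $\nu_E$ it defines satisfies $\nu_E(\alpha)=\ord_\m(\alpha)$ because for $\alpha\in A$ one has $\alpha\in\m^n\setminus\m^{n+1}$ iff, writing $\alpha=s^n\beta$ with $\beta\in\Ast$, the image $\bar\beta\in k[\bar t_1]$ is nonzero, i.e. $\nu_E(\alpha)=n$; this matches the earlier remark that $\Gr_\m(A)$ is a domain, so $\ord_\m$ is already a valuation and $\nu_E$ restricted to $A$ must coincide with it.

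For (4), factoriality of $\Ast$: $\Ast$ is a regular domain by (1), and for regular rings ``locally factorial'' (Auslander–Buchsbaum) gives that it suffices to show $\Pic(\Ast)=0$; but $\Spec\Ast=D_+(\tilde s)$ sits inside $Z'$ with complement contained in the chart $D_+(\tilde t)$, and one computes $\Pic$ via the excision/localization sequence relating $\Pic(Z')$, $\Pic(\Ast)$ and the class of $E$ — using that $\Pic(Z')$ is generated by $\cO(E)$ over $\Pic(\Spec A)=0$ (as $A$ is factorial) and that $E|_{D_+(\tilde s)}=V(s)$ is principal. This is the one step where I expect a little care is needed to phrase cleanly; alternatively one argues directly that every height-one prime of $\Ast$ is principal, using that $\Ast[1/s]=A_s[T_1]$ is factorial and that $s$ itself is prime in $\Ast$ (since $\Ast/s\Ast\cong k[\bar t_1]$ is a domain), then invoking Nagata's criterion. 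For (5), given $f\in A$ with $\ord_\m(f)=n$: by definition the strict transform $V(f)'$ on $D_+(\tilde s)$ is cut out by $f\Ast$ with the $E$-component (i.e. the factor $s^n$, by (3)) removed, so $V(f)'\cap D_+(\tilde s)=V(f/s^n)$; and for (5b), writing $f=\sum_{i+j=n}c_{ij}s^it^j+(\text{higher order})$ with $c_{ij}\in A$, we get $f/s^n=\sum_{i+j=n}c_{ij}(\tilde t/\tilde s)^j+s\cdot(\ldots)$, so modulo $s$ its image in $k[\bar t_1]$ is $\sum_{j\le n}\bar c_{ij}\bar t_1^{\,j}$, a polynomial of degree at most $n$. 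The main obstacle, as noted, is part (4): pinning down $\Pic(\Ast)=0$ rigorously, which I would handle via the prime-element-plus-localization argument rather than a Picard-group diagram chase.
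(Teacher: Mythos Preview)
Your proposal is correct and follows essentially the same route as the paper, which dismisses (1), (2), and (5.a) as ``standard'' and gives exactly the brief arguments for (3), (4), and (5.b) that you sketch. Two small remarks: for (4) the paper goes straight to Nagata's criterion (your alternative), bypassing the Picard-group discussion entirely; and note that $(\Ast)_s \cong A_s$ rather than $A_s[T_1]$, since inverting $s$ forces $T_1 = t/s \in A_s$ --- this slip does not affect the factoriality conclusion.
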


\begin{proof} (1), (2), (5.a) are standard.
  
  (3) Let $\xi_E$ be the generic point of $E$. By (2), $\xi_E \in D_+(\tilde{s})$ and $\cO_{Z', \xi_E}(-E)$ is generated by
  $s$. Therefore $\nu_E(s)=1$. If $\alpha\in \m^\ell\setminus \m^{\ell+1}$, then
  $\alpha/s^\ell\in \Ast$, hence $\nu_E(\alpha)\ge \ell=\ord_\m(\alpha)$.  

  Conversely, suppose $\nu_E(\alpha)=\ell$. We have 
  $$\mathrm{div}(\alpha s^{-\ell}) 
  =\mathrm{div}(\alpha)-\ell E\ge 0$$
as Weil divisors on   $D_+(\tilde{s})$.
Thus $\alpha\in s^\ell \widetilde{A}_{(\tilde{s})}$.
 So there exists $N\ge \ell$ such that $\alpha s^{N-\ell} \in \m^N$.
 Because the class of $s$ in
 $\m/\m^2\subset \Gr_\m(A)$ is a regular element,
  $\alpha\in \m^{\ell}$, hence $\ord_\m(\alpha)\ge \nu_E(\alpha)$. 

  (4) Notice that
  $s\in \Ast$ is a prime element by (2), and $(\Ast)_s=A_s$ is factorial.
  Hence $\Ast$ is factorial by Nagata's criterion. 

  (5.b) Write $$
f=\sum_{0\le i\le n} \alpha_i t^i s^{n-i}+ \epsilon_{n+1} 
$$
with $\alpha_i \in A$ and $\epsilon_{n+1}\in \m^{n+1}$. This implies that
$$
f/s^n=\sum_{0\in i\le n} \alpha_i t_1^i+ s c,  \quad c\in \Ast. 
$$ 
Its class mod $s$ has degree $\le n$. 
\end{proof}    

\subsection{Normalization after blowing-up} 

To determine the integral closure of an integral domain we will use Serre's
(S$_2$) + (R$_1$) criterion (\cite{LB}, Theorem 8.2.23). 

\begin{lemma} \label{normal-codim1} Let $(R, \nu)$ be a discrete
  valuation ring with residue field $\kappa $, let
  \[ D=R[y]/(y^2+\alpha y+\beta) \] 
  be an $R$-algebra with $\Delta_{D/R}:=\alpha^2-4\beta\ne 0$.  
\begin{enumerate}[\rm (1)]
\item If $\nu(\alpha^2-4\beta)=0$, then $D$ is an \'etale $R$-algebra of
  rank $2$;
\item If $\nu(\alpha^2-4\beta)=1$, then $D$ is a discrete valuation ring
  totally ramified over $R$ with ramification index $2$;
\item The same is true if $\chara(\kappa)=2$, $\nu(\alpha)>0$,  and
  $\nu(\beta)=1$; 
\item If $\chara(\kappa)=2$, $\nu(\alpha)>0$,  and 
  $\beta$ is not a square in $\kappa$, then $D$ is  a discrete valuation
  ring with ramification index $1$ over $R$ 
  and inseparable quadratic residue extension. 
\end{enumerate} 
\end{lemma}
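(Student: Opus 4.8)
The plan is to treat each case by writing down the maximal ideal(s) of $D$ explicitly and checking that the localization is a DVR, using the standard fact that a one-dimensional local domain whose maximal ideal is principal is a DVR. Throughout, write $\pi$ for a uniformizer of $R$ and $\kappa = R/\pi R$. Since $\Delta_{D/R} = \alpha^2 - 4\beta \neq 0$, the polynomial $y^2 + \alpha y + \beta$ has nonzero discriminant, so $D \otimes_R \Frac(R)$ is an \'etale $\Frac(R)$-algebra of rank $2$; in particular $D$ is reduced and $R$-torsion-free (free of rank $2$), hence flat, and $\dim D = 1$. First I would dispose of (1): $\nu(\Delta_{D/R}) = 0$ means the discriminant is a unit, so $A \to D$ is \'etale at every maximal ideal by Proposition~\ref{lambda-f}(1) (equivalently, $\Omega^1_{D/R} = 0$ and $D$ is flat of finite presentation); there is nothing more to do.

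For (2), $\nu(\alpha^2 - 4\beta) = 1$ forces $\lambda_\m(D) \le 1$ (by Proposition~\ref{lambda-f}(2), $\lambda_\m(D) \le \ord_\m(\Delta_{D/R}) = 1$) and in fact $\lambda_\m(D) = 1$ since $D$ is not \'etale. By Proposition~\ref{reg-lambda}(3), there is a unique maximal ideal $\n \subset D$ over $\m$ with $\kappa \xrightarrow{\sim} D/\n$, and $D = D_\n$ is local. By Proposition~\ref{reg-lambda}(2), $\lambda_\m(D) = 1$ means $D$ is regular at $\n$, so $D$ is a regular local ring of dimension $1$, i.e.\ a DVR. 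Finally $D$ is totally ramified with $e = 2$: the residue extension $\kappa \to D/\n$ is trivial, and $ef = 2$ by freeness of rank $2$ (equivalently, $\pi D = \n^2$ up to a unit, which one reads off from the equation, e.g.\ when $\chara(\kappa)\neq 2$ we may complete the square and $y^2 = (\text{unit})\pi$ up to $\m^2$).

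Case (3) is the $\chara(\kappa) = 2$ analogue: here $\nu(\alpha) > 0$ and $\nu(\beta) = 1$ give $2\ord_\m(\alpha) \ge 2 > 1 = \ord_\m(\beta)$, so by Lemma~\ref{lambdaz}(1) $\lambda_\m(D) = \lambda_\m(y) = \min\{2\ord(\alpha), \ord(\beta)\} = 1$ (note $\tilde\beta$ has odd order $1$, so it is not a square, consistent with \ref{lambdaz}(2.a)). Then exactly as in case (2), Proposition~\ref{reg-lambda} gives that $D$ is a DVR, local with $\n$ the unique maximal ideal over $\m$; one checks $\pi \in \n^2$ (since $\beta \in \n^2$ as $\nu(\beta)=1$ and $y \in \n$, while $\alpha y \in \n^2$ too because $\nu(\alpha) \ge 1$), while $\pi \notin \n^3$, so $e = 2$ and the extension is totally ramified. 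For (4), $\chara(\kappa) = 2$, $\nu(\alpha) > 0$, and $\beta \notin \kappa^2$: then $D/\m D \simeq \kappa[y]/(y^2 + \bar\beta)$ with $\bar\beta \notin \kappa^2$, which is a field — the inseparable quadratic extension $\kappa[\beta^{1/2}]$. Hence $\m D$ is a maximal ideal of $D$, so $D$ is local with $\n = \m D$, which is principal (generated by $\pi$). Thus $D$ is a one-dimensional local domain with principal maximal ideal, hence a DVR, with $e(D/R) = 1$ and residue field $\kappa[\beta^{1/2}]$ inseparable of degree $2$ over $\kappa$.

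The one point that needs slight care — the main (minor) obstacle — is verifying in cases (2) and (3) that the ramification index is exactly $2$ and not something one has glossed over: concretely, that $\pi D = \n^2$ rather than $\pi D = \n$. This follows because $D$ is free of rank $2$ over $R$, so $\dim_\kappa D/\pi D = 2$; since $D/\pi D$ is local Artinian with residue field $\kappa$ (residue extension trivial), it must be $\kappa[\n/\pi D]$ with $(\n/\pi D)^2 = 0$ and $\n/\pi D \neq 0$, forcing $\length_D(D/\pi D) = 2$ and hence $e = 2$. In case (1) one should also note the degree-$2$ \'etale claim is immediate from flatness of rank $2$ plus unramifiedness. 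No further subtleties arise; the rest is the bookkeeping indicated above.
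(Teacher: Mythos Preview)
Your argument is essentially correct and matches the paper's own proof, which consists of the single word ``Straightforward.'' Two small repairs are worth making.

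In case~(2), the step ``$\lambda_\m(D)=1$ since $D$ is not \'etale'' is not valid as written: Proposition~\ref{reg-lambda}(1) only yields (not \'etale $\Rightarrow \lambda\ge 1$) under the extra hypothesis $\chara(\kappa)\ne 2$ or $\kappa$ perfect. The fix is to note that $\nu(\alpha^2-4\beta)=1$ already forces $\chara(\kappa)\ne 2$: if $\chara(\kappa)=2$ then $\nu(4\beta)\ge 2$ and $\nu(\alpha^2)$ is even, so $\nu(\alpha^2-4\beta)$ is either $0$ or $\ge 2$. Once $\chara(\kappa)\ne 2$, Proposition~\ref{lambda-f}(2) gives the equality $\lambda_\m(D)=\ord_\m(\Delta_{D/R})=1$ directly, and the rest of your argument goes through.

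In case~(3) you cite Lemma~\ref{lambdaz}(1), but since $2\ord(\alpha)\ge 2>1=\ord(\beta)$ you are actually in case~(2) of that lemma; your parenthetical remark about~\ref{lambdaz}(2.a) is the correct justification (the initial form of $\beta$ in $\Gr_\m(R)\simeq\kappa[T]$ has degree~$1$, hence is not a square, so $\lambda_\m(D)=\lambda_\m(y)=1$). In case~(4) you assert $D$ is a domain without comment; this is fine, since a one-dimensional noetherian local ring with principal maximal ideal is automatically a domain (Krull's intersection theorem forces every nonzero element to be a unit times a power of the uniformizer, and the uniformizer cannot be nilpotent as $\dim D=1$).
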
 

\begin{proof} Straightforward. 
\end{proof}

\begin{proposition} \label{lemma2.12} Keep the notation of 
 Proposition~\ref{bl-reg-ufd} with $A$ excellent. Let $B$ be  
  an integrally closed domain, free of rank $2$ over $A$. 
  Fix a generator $y$ such that $\lambda_\m(y)=\lambda_\m(B)$ and 
  set $r=[\lambda_\m(B)/2]$.   
  Let $s\in \m \setminus \m^2$ and let $B_1:=\Ast[y_1]\subset \Frac(B)$.
  Put 
\[ y_1=y/s^{r}, \ a_1=a/s^{r}, \ b_1=b/s^{2r}\in A_s. \] 
Then
\begin{enumerate}[\rm (1)] 
\item $\Delta_{B_1/\Ast}=s^{-2r}\Delta_{B/A}\Ast$; 
\item $a_1, b_1\in \Ast$,  
    \begin{equation}
      \label{eq:z1}
y_1^2+a_1y_1+b_1=0       
    \end{equation}
    and $B_1$ is the integral closure of $\Ast$ in $\Frac(B)$.
\end{enumerate} 
\end{proposition}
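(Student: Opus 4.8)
The plan is to prove the two items in turn, using Serre's criterion for normality and the results already established about the blowing-up.

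First I would establish item (2), the containment $a_1, b_1 \in \Ast$. Since $\lambda_\m(y) = \min\{2\ord_\m(a), \ord_\m(b)\} = \lambda_\m(B) \ge 2r$, we have $\ord_\m(a) \ge r$ and $\ord_\m(b) \ge 2r$. By Proposition~\ref{bl-reg-ufd}(1), the isomorphism $A[T_1]/(sT_1 - t) \xrightarrow{\sim} \Ast$ takes $\m^r$ onto $s^r \Ast$, so $a/s^r$ and $b/s^{2r}$ indeed lie in $\Ast$ (computed inside $A_s \subset \Frac(A)$). The relation \eqref{eq:z1} is then just the original relation $y^2 + ay + b = 0$ divided by $s^{2r}$. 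The discriminant formula (1) is immediate: $\Delta_{B_1/\Ast} = (a_1^2 - 4b_1)\Ast = s^{-2r}(a^2 - 4b)\Ast = s^{-2r}\Delta_{B/A}\Ast$, and this is nonzero because $\Delta_{B/A} \ne 0$ ($B$ being an integrally closed domain of rank $2$, hence $\Frac(B)/\Frac(A)$ is a field extension of degree $2$, so $B \otimes_A \Frac(A)$ is reduced and $\Delta_{B/A} \ne 0$).

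The substantive part is showing that $B_1 = \Ast[y_1]$ is the integral closure of $\Ast$ in $\Frac(B)$. Note first that $\Frac(B_1) = \Frac(B)$ since $y_1 = y/s^r$ and $s \in A^* \subset \Frac(A)^*$ inside the fraction field; and $B_1$ is finite over $\Ast$ (it is free of rank $2$, with basis $1, y_1$), hence integral over $\Ast$. So it remains to check that $B_1$ is integrally closed, for which I would use Serre's $(R_1) + (S_2)$ criterion (\cite{LB}, Theorem 8.2.23) cited in the text. The $(S_2)$ condition is automatic: $B_1$ is free over the regular (hence Cohen--Macaulay) ring $\Ast$, so $B_1$ is Cohen--Macaulay. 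For $(R_1)$: let $\q$ be a height-one prime of $B_1$, lying over a height-one prime $\p$ of $\Ast$; since $\Ast$ is factorial (Proposition~\ref{bl-reg-ufd}(4)), $\Ast_\p$ is a discrete valuation ring, and I must show $(B_1)_\q$ is a DVR. Here I would invoke Lemma~\ref{normal-codim1} applied to $R = \Ast_\p$, $\alpha = a_1$, $\beta = b_1$: one needs to verify that one of the four cases of that lemma applies, i.e., that $(B_1)_\p := B_1 \otimes_{\Ast} \Ast_\p$ is either étale or a DVR over $\Ast_\p$. The key claim to nail down is: \emph{for every height-one prime $\p$ of $\Ast$, the pair $(a_1, b_1)$ satisfies one of conditions (1)--(4) of Lemma~\ref{normal-codim1} over $\Ast_\p$.}

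I expect the main obstacle to be exactly this last verification, and I would split it according to whether $s \in \p$ or not. If $s \notin \p$, then $\Ast_\p$ is a localization of $A_s$, which is a localization of the normal domain $A$; since $B$ is integrally closed and $B \otimes_A A_s = A_s[y] = A_s[y_1]$ (as $s$ is a unit there), the localization $B_1 \otimes_{\Ast} \Ast_\p$ is a localization of the normal domain $B_s$, hence normal, hence the DVR condition holds. If $s \in \p$, then by Proposition~\ref{bl-reg-ufd}(2), since $s$ is prime in the factorial ring $\Ast$, the only height-one prime containing $s$ is $\p = (s)$, and $\Ast_{(s)}$ has residue field $\Frac(k[\bar t_1]) = k(\bar t_1)$. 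Here I must compute $\ord_{(s)}(a_1)$ and $\ord_{(s)}(b_1)$ — equivalently $\ord_\m(a) - r$ and $\ord_\m(b) - 2r$ — and compare with $\nu_{(s)}(\Delta_{B_1/\Ast})$. By the choice of $y$ maximizing $\lambda_\m$, the minimum $\min\{2(\ord_\m(a)-r), \ord_\m(b) - 2r\}$ is $\lambda_\m(B) - 2r \in \{0, 1\}$ depending on the parity of $\lambda_\m(B)$, so one gets either case (1) (when $\lambda_\m(B)$ is even and $\ord_\m(b)=2r$, need to also check the residue of $b_1$ is not a square, which is where the maximality of $\lambda_\m(y)$ and Lemma~\ref{lambdaz}(2.b) come in to exclude the square case) or cases (2)/(3)/(4) (when $\lambda_\m(B)$ is odd, or the $\tilde b$-not-a-square situation of Lemma~\ref{lambdaz}(2.a)). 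The bookkeeping here — matching the cases of Lemma~\ref{lambdaz} to those of Lemma~\ref{normal-codim1}, separately in residue characteristic $2$ and $\ne 2$ — is the delicate point, but it is essentially a case analysis with no deep content once the orders are correctly read off from $\lambda_\m(B) = \lambda_\m(y)$.
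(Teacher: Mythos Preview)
Your approach is essentially the paper's: Serre's criterion, $(S_2)$ from the complete-intersection presentation over the regular ring $\Ast$, reduction of $(R_1)$ to the single height-one prime $(s)$ (away from $s$ the map $\Spec B_1 \to \Spec B$ is an isomorphism), and then Lemma~\ref{normal-codim1} combined with the maximality $\lambda_\m(y)=\lambda_\m(B)$ and Lemma~\ref{lambdaz}(2.b) to rule out the bad case where $\chara k=2$, $\nu_E(a_1)>0$, $\nu_E(b_1)=0$ and $\bar b_1$ is a square. One small correction: your parenthetical assertion that $\Delta_{B/A}\ne 0$ is false when $\Frac(B)/\Frac(A)$ is purely inseparable (characteristic $2$), but the proposition only claims the equality of ideals, so this does not affect your argument.
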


\begin{proof} The hypothesis $A$ excellent insures that $\lambda_\m(B)$ is
  finite (Proposition~\ref{lambda-f}). Part (2) is immediate.

(1) We have $a_1, b_1\in \widetilde{A}_{(\tilde{s})}$
 by Proposition~\ref{bl-reg-ufd}(1). As $B_1$ contains $\Ast$
 and is finite over $\Ast$,  it is enough to show that $B_1$ is integrally closed. 
As $\Ast$ is regular and 
\[ B_1\simeq \widetilde{A}_{(\tilde{s})}[T]/(T^2+a_1T+b_1),\] 
$B_1$ is a local complete intersection, hence satisfies (S$_n$)
for all $n\geq 1$ (\cite{LB}, Corollary~8.2.18 and Example 8.2.20). By
Serre's criterion it is enough to show that $B_1$ is integrally closed at
points of codimension $1$. Moreover, as $\Spec B_1\to \Spec B$ is an
isomorphism away from $V(s)$, it is enough to check the property
for codimension $1$ points belonging to $V(s)$, so lying over the
generic point $\xi_E$ of $E$ (exceptional divisor of $Z'\to Z$).

Let $R=\cO_{Z', \xi_E}$. Then 
\[ B_1\otimes_{\Ast} R\simeq R[y_1]/(y_1^2+a_1y_1+b_1).\]
We will apply Lemma~\ref{normal-codim1}.  
We have $\min\{ 2\nu_E(a_1), \nu_E(b_1) \}\le 1$. So we only have to check that 
when $\chara(k(\xi_E))=2$, $\nu_E(a_1)>0$ and
$\nu_E(b_1)=0$, then $b_1$ is not a square in $k(\xi_E)$. Under these
hypothesis, we have $2\ord_\m(a)=2\nu_E(a) > 2r =
\nu_E(b)=\ord_\m(b)$ (Proposition~\ref{bl-reg-ufd}(3)).  

Suppose that $b_1$ is a square in $k(\xi_E)=\Frac(\Ast/(s))$. As
$\Ast/(s)$ is integrally closed (Proposition~\ref{bl-reg-ufd}(2)), $b_1$
is a square in $\Ast/(s)$: there exist $c_1, c_2\in \Ast$ such that
$$
b_1=c_1^2+sc_2. 
$$
There exist $N\ge r$ big enough, $\alpha_N\in \m^N$ and
$\beta_{2N}\in \m^{2N}$ such that $c_1=\alpha_N/s^N$,
$c_2=\beta_{2N}/s^{2N}$. Then
$$s^{2N-2r}b=\alpha_N^2+s\beta_{2N}.$$ 
So the initial form of $s^{2N-2r}b\in \m^{2N}$ is a square in $\Gr_\m(A)$,
thus the same is true for that of $b$. 
This contradicts Lemma~\ref{lambdaz}(2.b) and (1) is
proved. 
\end{proof} 

\subsection{Lipman's resolution of singularities} \label{SNB}

\begin{theorem} \label{desing-main} Let $Z$ be an integral noetherian
  excellent regular surface, 
  let $\psi: Y\to Z$ be a normal double cover. 
  Let $p_0\in Y$ be a singular point, let $q_0=\psi(p_0)$. 
  \begin{enumerate}[\rm (1)] 
  \item Let $Z'$ (resp. $Y'$) be the blowing-up of $Z$ (resp. of $Y$)
    along the (reduced) closed point $q_0$ (resp. $p_0$) and let
    $\tilde{Y}\to Y'$ be the normalization. Then 
we have a canonical commutative diagram 
    $$
\xymatrix{ \tilde{Y} \ar[r] \ar_\phi[rd] &   Y' \ar[r]^{{\mathrm{bl}}_{p_0}} \ar[d] & Y \ar^{\psi}[d]\\
  & Z' \ar^{{\mathrm{bl}}_{q_0}}[r] & Z 
}
$$
and $\phi: \tilde{Y}\to Z'$ is a double cover with discriminant ideal 
\[\Delta_{\tilde{Y}/Z'}=\cO_{Z'}(2rE)\otimes_{\cO_{Z}}\Delta_{Y/Z},\]
where $r=[\lambda_{q_0}(Y)/2]$ and $E$ is the exceptional divisor of $Z'\to
Z$.
\item If
  \[Y_n \to Y_{n-1} \to ... \to Y_1\to Y_0=Y\]
  is a sequence of normalized
    blowing-ups of (reduced) singular points $p_i\in Y_i$, then we have 
   a commutative  diagram  
\[
\xymatrix{
  Y_n \ar[r] \ar[d]^{\psi_n} &  Y_{n-1} \ar[r] \ar[d]^{\psi_{n-1}}& \cdots  \ar[r] &  Y_1
  \ar[d]^{\psi_1} \ar[r] &  Y_0 \ar[d]^{\psi_0=\psi} \\ 
   Z_n \ar[r]         &  Z_{n-1} \ar[r]          & \cdots  \ar[r] & Z_1
   \ar[r] &  Z_0:=Z 
 }
\] 
where $Z_{i+1}\to Z_i$ is the blowing-up of the reduced closed point
$q_i\in Z_i$, image of $p_i$, and each $\psi_i: Y_i\to Z_i$ is a
normal double cover. 
\item Suppose moreover that $Y\to Z$ is generically \'etale.
Then the Galois group $G:=\mathrm{Gal}(K(Y)/K(Z))$
acts on each $Y_i$ and  $Y_i/G=Z_i$.
  \end{enumerate} 
\end{theorem}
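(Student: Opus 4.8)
The plan is to prove the three assertions essentially in the order they are stated, reducing everything to the local, affine, computations already prepared in \S\ref{normal-bl}.

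\medskip

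\textbf{Part (1).} First I would work locally on $Z$ near $q_0$, choosing an affine open $V=\Spec A$ with $A$ regular excellent, $\m=\m_{q_0}$ of height $2$, and (by Lemma~\ref{CM-dc}) a presentation $B=\psi_*\cO_Y(V)=A[y]/(y^2+ay+b)$ with the generator $y$ chosen so that $\lambda_\m(y)=\lambda_\m(B)=\lambda_{q_0}(Y)$; such a $y$ exists and is found by the algorithm of Lemma~\ref{lambdaz}. Set $r=[\lambda_{q_0}(Y)/2]$. By Proposition~\ref{reg-lambda}(3), since $p_0$ is singular we have $\lambda_\m(B)\ge 2$, hence $r\ge 1$, there is a unique point $p_0$ over $q_0$, and $\cO_{Y,p_0}=B_\m$; thus blowing up the \emph{reduced} point $p_0$ is the same as blowing up the closed subscheme with ideal $\m B$. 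Now $\bl_{p_0}Y'$ is $\Proj$ of the Rees algebra of $\m B$, which is the $B$-algebra generated by $\m\widetilde A$; since $B$ is $A$-flat, the Rees algebra of $\m B$ is $\widetilde A\otimes_A B$, and so $Y'=Z'\times_Z Y$. Therefore $Y'$ is already covered by the two charts $D_+(\tilde s)$, $D_+(\tilde t)$, over which $\cO_{Y'}=\widetilde A_{(\tilde s)}\otimes_A B=\widetilde A_{(\tilde s)}[y]/(y^2+ay+b)$. This last ring need not be normal; its normalization in $\Frac(B)$ is, by Proposition~\ref{lemma2.12}, exactly $B_1=\widetilde A_{(\tilde s)}[y_1]$ with $y_1=y/s^r$, $a_1=a/s^r$, $b_1=b/s^{2r}$ and relation $y_1^2+a_1y_1+b_1=0$. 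Gluing the two charts gives $\tilde Y\to Z'$, finite flat of degree $2$ because each $B_1$ is free of rank $2$; this is the claimed diagram. For the discriminant formula: Proposition~\ref{lemma2.12}(1) gives $\Delta_{B_1/\widetilde A_{(\tilde s)}}=s^{-2r}\Delta_{B/A}\,\widetilde A_{(\tilde s)}$, and since $\nu_E(s)=1$ (Proposition~\ref{bl-reg-ufd}(3)) the divisor $\mathrm{div}(s^{-2r})$ on the chart $D_+(\tilde s)$ is $2rE$; patching over both charts yields $\Delta_{\tilde Y/Z'}=\cO_{Z'}(2rE)\otimes_{\cO_Z}\Delta_{Y/Z}$. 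The one subtlety worth spelling out is that the charts $D_+(\tilde s)$ and $D_+(\tilde t)$ use possibly different generators $y$ realizing $\lambda_\m(B)$, but any two such generators differ by $z=uy+v$, $u\in A^*$, $v\in A$ (Lemma~\ref{CM-dc}), and since the normalization is intrinsic the resulting $\tilde Y$ is independent of the choices.

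\medskip

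\textbf{Part (2).} This is an immediate induction on $n$ once Part (1) is in hand. We must check that $\psi_1:Y_1=\tilde Y\to Z_1=Z'$ is again a \emph{normal} double cover of a regular surface: $Z'$ is regular by Proposition~\ref{bl-reg-ufd}(1); $Y_1$ is integral because it is birational to $Y$ (the blowing-up is birational, being an isomorphism away from $p_0$, and normalization preserves the function field); $Y_1$ is normal by construction; and $\psi_1$ is finite flat of degree $2$ as noted above. Hence the hypotheses of the theorem hold for $Y_1\to Z_1$, and we may iterate: given the singular point $p_i\in Y_i$ with image $q_i\in Z_i$, Part (1) produces $Z_{i+1}=\bl_{q_i}Z_i$, a normal double cover $\psi_{i+1}:Y_{i+1}\to Z_{i+1}$, and the square in the diagram. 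Concatenating the squares gives the stated commutative diagram.

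\medskip

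\textbf{Part (3).} Assume $Y\to Z$ is generically \'etale, so $K(Y)/K(Z)$ is a separable quadratic extension with Galois group $G=\langle\sigma\rangle$ of order $2$. I would argue by induction on $i$ that $G$ acts on $Y_i$ compatibly with $\psi_i$ and that $\psi_i$ identifies $Z_i$ with the quotient $Y_i/G$. For $i=0$: $Z=\Spec A$ locally, $B=A[y]/(y^2+ay+b)$ is the integral closure of $A$ in $K(Y)$, hence $\sigma$-stable (the integral closure depends only on the field extension), so $G$ acts on $Y$; and $B^G=A$ since $A$ is integrally closed in $K(Z)=K(Y)^G$, giving $Y/G=Z$. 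For the inductive step: $q_i\in Z_i$ is the image of $p_i$, so $\psi_i^{-1}(q_i)=\{p_i\}$ is a single point (here I use $\lambda_{q_i}(Y_i)\ge 2$ and Proposition~\ref{reg-lambda}(3) again, since $p_i$ is singular), hence $\sigma(p_i)=p_i$ — thus blowing up $p_i$ is $G$-equivariant, $G$ acts on $Y_i'=\bl_{p_i}Y_i$, and by functoriality of normalization $G$ acts on $Y_{i+1}$. The identity $Y_{i+1}/G=Z_{i+1}$ then follows from the identification $Y_{i+1}=$ normalization of $Z_{i+1}\times_{Z_i}Y_i$: on charts, $\cO_{Y_{i+1}}=\widetilde A_{(\tilde s)}[y_1]/(y_1^2+a_1y_1+b_1)$ with $\sigma(y_1)=-a_1-y_1$, so $\cO_{Y_{i+1}}^G=\widetilde A_{(\tilde s)}=\cO_{Z_{i+1}}$, because $\widetilde A_{(\tilde s)}$ is integrally closed (Proposition~\ref{bl-reg-ufd}(4), or S$_2$+R$_1$) and equals $K(Z_{i+1})\cap\cO_{Y_{i+1}}$.

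\medskip

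I expect the main obstacle to be the verification in Part (1) that blowing up the reduced point $p_0$ coincides with blowing up $\m B=\m\cO_Y$, and the attendant base-change identity $\bl_{p_0}Y'=Z'\times_Z Y$: this rests on $\psi^{-1}(q_0)$ being a single point with $k(q_0)\xrightarrow{\sim}k(p_0)$ (so that the reduced ideal of $p_0$ is $\m B$, not merely $\sqrt{\m B}$ in some larger sense) and on flatness of $B$ over $A$ to commute the Rees construction with $\otimes_A B$. Everything downstream — normality via Serre's criterion, the discriminant formula, the $G$-action and the quotient identity — is then a formal consequence of Propositions~\ref{bl-reg-ufd}, \ref{lemma2.12}, and \ref{reg-lambda}, applied chart by chart, together with the remark that the two coordinate charts may be glued because the normalization is intrinsic to $\Frac(B)$.
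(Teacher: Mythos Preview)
Your argument in Part~(1) contains a genuine gap at precisely the point you flagged as the ``main obstacle''. The claim that the reduced ideal of $p_0$ equals $\m B$ is false. With the generator $y$ chosen so that $\lambda_\m(y)=\lambda_\m(B)\ge 2$, one has $a,b\in\m$, hence $B/\m B\simeq k[y]/(y^2)$, which is a local Artinian ring of length~$2$ over $k$, not a field. Thus $\m B$ is not radical; the maximal ideal of $B$ over $\m$ is $\n=\m B+yB=\m\oplus Ay\supsetneq\m B$. The isomorphism $k(q_0)\simeq k(p_0)$ from Proposition~\ref{reg-lambda}(3) tells you only that $B/\n\simeq k$, not that $\n=\m B$. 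Consequently $Y'=\bl_\n Y$ is \emph{not} equal to $Z'\times_Z Y=\bl_{\m B}Y$: already their exceptional fibres differ, the first being the conic $\Proj k[s,t,\tilde y]/(\tilde y^2+\tilde a\tilde y+\tilde b)$ in $\PP^2_k$, the second the non-reduced scheme $\PP^1_{k[y]/(y^2)}$.

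The paper repairs this by computing the powers of $\n$ directly: from $a\in\m$, $b\in\m^2$ one checks $\n^n=\m^n\oplus\m^{n-1}y=\m^{n-1}\n$ for all $n\ge 1$, whence the Rees algebra $\bigoplus_n\n^n$ is generated over $\bigoplus_n\m^n$ by $y$ in degree~$0$ and $\tilde y$ in degree~$1$, hence is finite over it. This yields a finite morphism $Y'\to Z'$, after which $\tilde Y$ is identified with the normalization of $Z'$ in $K(Y)$ and your chart-by-chart description via Proposition~\ref{lemma2.12} goes through. (Alternatively, the identity $\n^2=\m\n$ shows $\m\cO_{Y'}=\n\cO_{Y'}$ is invertible, giving the factorization $Y'\to Z'$ by the universal property; finiteness still needs the Rees-algebra computation.) Your treatment of Parts~(2) and~(3), and the discriminant formula once Part~(1) is fixed, are fine and in fact spell out more than the paper does.
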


\begin{proof} (1) First we prove that $Y'\to Z$ factorizes through a
  finite morphism $Y'\to Z'$. We can suppose that $B$ is free over $A$.
  Let $\m\subset A$ be the maximal ideal corresponding to $q_0$. 
  Let  $\lambda_\m(y)=\lambda_\m(B)$. Then
  $a\in \m$ and $b\in \m^2$.   Let $\n\subset B$ be the maximal ideal
  corresponding to $p_0$. It is easy to that
  $$\n^n=\m^n\oplus\m^{n-1}y=\m^{n-1}\n$$
  for all $n\ge 1$. Therefore 
  $$\oplus_{n\ge 0} \n^n=Ay+
  (\oplus_{n\ge 0} \m^n)+ (\oplus_{n\ge 0} \m^n)\tilde{y}$$ 
where $\tilde{y}$ is $y$ viewed as homogeneous element of degree
$1$. Hence $\oplus_{n\ge 0} \n^n$ is finite over
$\oplus_{n\ge 0} \m^n$ (see also \cite{SH}, Proposition 5.2.5),
and $Y\to Z$ induces a finite morphism 
$Y'\to Z'$. Thus $\tilde{Y}$ coincides with the normalization of $Z'$
in $K(Y)$. The local structure of $\tilde{Y}$ is described by
Proposition~\ref{lemma2.12}, and we see that 
$\tilde{Y}\to Z'$ is a normal double cover (this also follows from
Example~\ref{exd_2}), and the discriminant  
ideal is $\cO_{Z'}(2rE)\otimes_{\cO_Z}\Delta_{Y/Z}$. 

(2) and (3) follow from (1). 
\end{proof}

\begin{remark} \label{bl-nor-w} Suppose
  $Z=\Spec A$ is affine and $Y=\Spec B$ is free over $A$.
  Consider the graded $A$-algebra $\tilde{A}[W]$
  with $\deg W=r$. We see using Proposition~\ref{lemma2.12}
  that $\tilde{Y}$ is the
  closed subscheme of $\Proj (\tilde{A}[W])$ defined by the equation
  $$
  W^2+\tilde{a}W+\tilde{b}=0,
  $$
where $\tilde{a}, \tilde{b}$ are respectively $a$ and $b$
considered as homogeneous elements in $\tilde{A}$ of
respective degrees $r, 2r$.
\end{remark}

 \subsection{The exceptional locus of Lipman's resolution of  singularities}

 Keep the assumption and notation of  Theorem~\ref{desing-main}. Suppose
 that Lipman's sequence for the singular point $p_0\in Y_0$ terminates at $Y_N$.
 The morphism $Y_N\to Y_0$ is an isomorphism away from $p_0$. 
 In this subsection we study the scheme structure of the pre-image of $p_0$ by
 $Y_N\to Y_0$ (the exceptional locus of $Y_N\to Y_0$). 
   
 We fix a prime divisor $E_0\ni q_0$ of $Z_0$, regular at $q_0$.
 Typically, for our applications in mind, $Z$ is a relative curve over
 a  discrete valuation ring $R$, and $E_0$ is the closed fiber of $Z$.
 Let
\[ \cE:=Y_N\times_{Y_0} \psi_0^*E_0\subset Y_N.\]
This is the union of the strict
 transform of $\psi^*E_0$ in $Y_N$ and the exceptional locus of $Y_N\to Y_0$. 
 In the above example, $\cE$ is the closed fiber of $Y_N$.  We would
 like to know the 
 structure of each irreducible component of $\cE$, their multiplicities in $\cE$, 
and how they intersect each other. 

The double cover $Y_0\to Z_0$ induces a finite flat morphism
$\cE\to Z_N\times_{Z_0} E_0$ 
of degree $2$. We start by describing $Z_N\times_{Z_0} E_0$. 
For all $1\le i\le N$, denote by $E_i\subset Z_i$ the
 exceptional divisor of $Z_i \to Z_{i-1}$ and by $E'_i$ (including $i=0$) its strict
 transform in $Z_N$.

\begin{lemma} \label{ZN_Z}
 Write
 \[ Z_N\times_Z E_0= \sum_{0\leq i \leq N} m_i E'_i\]
 as Cartier divisors of $Z_N$. 
\begin{enumerate}[\rm (1)] 
\item For $1\leq i\leq N$, $E_i=\PP^1_{k(q_{i-1})}$.
  For all $0\leq i\le N$, the canonical morphism $E'_i\to E_i$ is an
  isomorphism. Denote by $q'_i\in E'_i$ the pre-image of $q_i$.  
\item The union $\cup_{0\leq i\leq N} E'_i\subset Z_N$ is a tree of irreducible
  components with transverse intersections. 
\item The multiplicities $m_i$ can be determined inductively as
  follows: $m_0=m_1=1$. Let $i\geq 1$. If $q_{i}$ is the intersection 
point of $E_i$ with the strict transform of $E_{i-1}$ in $Z_i$, then 
    $m_{i+1}=m_{i}+m_{i-1}$. Otherwise, $m_{i+1}=m_i$. 
  \end{enumerate}
\end{lemma}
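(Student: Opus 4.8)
The whole lemma is about the blowing-up side only ($Z_N \to Z_0$), so the double-cover structure plays no role here. My plan is to run an induction on $N$, using at each step the explicit description of the blowing-up of a regular surface at a closed point from Proposition~\ref{bl-reg-ufd}, together with the fact (Proposition~\ref{bl-reg-ufd}(1),(2)) that each $Z_{i+1}\to Z_i$ is again a blowing-up of a regular surface at a reduced closed point, and that the exceptional divisor $E_{i+1}$ is a $\PP^1$ over the residue field $k(q_i)$.

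\textbf{Step 1 (statement (1)).} First I would record that $E_i = \PP^1_{k(q_{i-1})}$: this is immediate from $Z_i \simeq \Proj \mathcal O_{Z_{i-1},q_{i-1}}[S,T]/(tS-sT)$ restricted to the exceptional divisor $V(\m_{q_{i-1}}\mathcal O_{Z_i})$, which is $\Proj k(q_{i-1})[S,T] = \PP^1_{k(q_{i-1})}$ by Proposition~\ref{bl-reg-ufd}(1),(2). For the claim that the strict transform $E'_i \to E_i$ is an isomorphism, the point is that each subsequent blowing-up is centered at a single reduced closed point $q_j \in Z_j$ ($j > i$); blowing up a regular surface at a closed point lying on a regular curve $E_i$ (resp.\ its strict transform) replaces a neighbourhood of that point by an isomorphic one on the strict transform of the curve, since $E_i$ is regular at $q_j$ (it is a regular curve on a regular surface, being a line in $\PP^1$). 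This is the local fact that the strict transform of a regular curve through the center of a point blow-up on a regular surface maps isomorphically to the curve. So by induction down the tower, $E'_i \to E_i$ is an isomorphism, and $q'_i$ is well-defined as the preimage of $q_i$ (note $q_i \in E_i$ always, since $q_i$ is a point of $Z_i$ and if $i\ge 1$ it lies on $E_i$ or if $i = 0$ we just take $q'_0 = q_0$).

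\textbf{Step 2 (statement (2)).} That $\cup_{0\le i\le N} E'_i$ is a tree with transverse intersections I would prove by induction on $N$. For $N$ fixed, passing to $Z_{N+1}$: the new component $E_{N+1}$ (whose strict transform in $Z_{N+1}$ is just $E_{N+1}$ itself at this stage) meets the strict transforms of the previous $E'_i$'s only near $q_N$; by Proposition~\ref{bl-reg-ufd}(2) any two of the old components that met transversally at a point $\ne q_N$ still do, the ones meeting at $q_N$ get separated (their strict transforms in $Z_{N+1}$ no longer meet) and instead each meets $E_{N+1}$ transversally at one point. Since $q_N$ lies on at most two of the previous components (by the inductive transversality + tree hypothesis, it is a node or a smooth point of the configuration), adding $E_{N+1}$ either attaches a new leaf to one vertex or subdivides one edge, in both cases preserving the tree property and transversality. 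Transversality at each new intersection follows because $E_{N+1}$ and the strict transform of a curve regular at $q_N$ meet transversally on the blow-up (Proposition~\ref{bl-reg-ufd}(2): maximal ideals of $\Ast$ containing $s$ are generated by $s$ and one more element, i.e.\ the local ring is regular and the two divisors $V(s)=E_{N+1}$ and the strict transform cut out a system of parameters).

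\textbf{Step 3 (statement (3), the multiplicities).} This is the computational heart. I would set up the induction so that at stage $i$ I know $Z_i\times_{Z_0} E_0 = \sum_{0\le j\le i} n_j^{(i)}\,(\text{strict transform of }E'_j\text{ in }Z_i)$ for suitable multiplicities, and track how the multiplicity of the exceptional divisor behaves under $\bl_{q_i}$. The key input is Proposition~\ref{bl-reg-ufd}(3): $\nu_{E_{i+1}} = \ord_{\m_{q_i}}$ on $\Frac(\mathcal O_{Z_i,q_i})$. Hence the multiplicity $m_{i+1}$ of $E_{i+1}$ in $Z_{i+1}\times_{Z_0}E_0$ equals $\ord_{\m_{q_i}}$ of a local equation of $Z_i \times_{Z_0} E_0$ at $q_i$, which is $\ord_{\m_{q_i}}$ of (local equation of strict transform of $E_0$) plus $\sum_j n_j^{(i)}\cdot \ord_{\m_{q_i}}(\text{local eq.\ of strict transform of }E'_j)$. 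The strict transform of $E_0$ is either regular and passing through $q_i$ with order $1$, or not passing through $q_i$. Each $E'_j$ is regular at $q_i$ (Step 1), so contributes order $1$ if $q_i\in E'_j$ and $0$ otherwise. By the tree property $q_i$ lies on at most two of the divisors $E'_0,\dots,E'_i$. Unravelling: if $q_i$ is the node $E_i \cap (\text{strict transform of }E_{i-1})$, then exactly the components $E_i$ (multiplicity $m_i$) and $E_{i-1}$ (multiplicity $m_{i-1}$) pass through $q_i$, giving $m_{i+1} = m_i + m_{i-1}$; otherwise $q_i$ lies only on $E_i$ (with $i\ge 1$; the degenerate cases $i=0,1$ give the base values $m_0=m_1=1$ directly from $\ord_{\m_{q_0}}(E_0)=1$), giving $m_{i+1} = m_i$. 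I'd double-check the base cases: $m_0 = 1$ since $E_0$ is reduced; $m_1 = \ord_{\m_{q_0}}(\text{eq.\ of }E_0) = 1$ as $E_0$ is regular at $q_0$.

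\textbf{Main obstacle.} The genuinely delicate point is the bookkeeping in Step 3 — namely verifying carefully that \emph{only} the component(s) through $q_i$ contribute to $m_{i+1}$ and that the strict transform of $E_0$ contributes $0$ once $q_i$ has moved off it (which happens as soon as the tower wanders away from $E_0$), so that the recursion is self-contained and doesn't secretly depend on $E_0$. This is where the tree structure from Step 2 and the regularity of every $E'_j$ at $q_i$ from Step 1 must be combined cleanly; the algebro-geometric facts themselves (order $=$ valuation, strict transforms of regular curves stay regular and separate) are all standard and, given Proposition~\ref{bl-reg-ufd}, essentially immediate.
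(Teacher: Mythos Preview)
Your proposal is correct and follows the same inductive route as the paper, which simply says to apply Proposition~\ref{bl-reg-ufd} repeatedly to the $Z_i$'s (taking for $s$ a local generator of $\cO_Z(-E_0)$ at $q_0$). The only place the paper is noticeably more economical is your Step~1: rather than tracking the strict transform of a regular curve through each blow-up in the tower, it observes once and for all that $E'_i\to E_i$ is finite birational and $E_i\simeq\PP^1_{k(q_{i-1})}$ is normal, so the map is an isomorphism.
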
 

\begin{proof} (1) This is because $E'_i\to E_i$ is finite birational
  and $E_i$ is normal.

  (2)-(3) Apply inductively Proposition~\ref{bl-reg-ufd} to the $Z_i$'s. One
 can take for $s$ a generator of $\cO_{Z}(-E_0)_{q_0}$. 
\end{proof}

\begin{proposition} \label{birational} Keep the notation of Lemma~\ref{ZN_Z}.
For $1\leq i\leq N$, set
 \[ \Theta'_i=\psi_N^{-1}(E'_i) \]   with the reduced structure. 
 \begin{enumerate}[\rm (1)]
 \item The scheme $\Theta'_i$ is irreducible if and only if $\psi_i^{-1}(E_i)$ is
   irreducible.  When it is reducible, it has two irreducible components, 
      each of them is isomorphic to $E'_i$ by $\psi_N$, and 
has multiplicity $m_i$ in $\cE$. 
 \item Suppose that $\lambda_{q_{i-1}}(Y_{i-1})$ is odd,  then $\Theta'_i\to E'_i$
    is an isomorphism, and the multiplicity of $\Theta'_i$ in $\cE$ is $2m_i$.
  \item Suppose that $\lambda_{q_{i-1}}(Y_{i-1})$ is even and
    $\Theta'_i$ is irreducible. Then  the latter has multiplicity $m_i$ in $\cE$.
Moreover, if $\chara(k(q_0))\neq 2$, then $\Theta'_i\to E'_i$ is generically \'etale. 
  \end{enumerate}
 \end{proposition}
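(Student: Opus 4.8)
The plan is to work locally on $Z_{i-1}$ near $q_{i-1}$ and use the explicit equations provided by Proposition \ref{lemma2.12}, tracking how the single blowing-up $Z_i \to Z_{i-1}$ affects the double cover, and then propagating the result along $Z_N \to Z_i$ where everything is an isomorphism over a neighbourhood of the generic point of $E'_i$. Write $A = \cO_{Z_{i-1},q_{i-1}}$ with maximal ideal $\m$, pick coordinates $s,t$ at $q_{i-1}$, let $B_{i-1} = \cO_{Y_{i-1}}$ near $p_{i-1}$ with generator $y$ realizing $\lambda_{\m}(y) = \lambda_{q_{i-1}}(Y_{i-1})$ and equation $y^2+ay+b=0$; set $r = [\lambda_{q_{i-1}}(Y_{i-1})/2]$. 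By Proposition \ref{lemma2.12}, on the chart $D_+(\tilde s)$ the normalized blowing-up $Y_i$ is $\Ast[y_1]$ with $y_1 = y/s^r$, $a_1 = a/s^r$, $b_1 = b/s^{2r}$, and $\Delta_{B_1/\Ast} = s^{-2r}\Delta_{B/A}\Ast$. The key object is the fibre of $\psi_i$ over the generic point $\xi_{E_i}$ of $E_i$: by Proposition \ref{bl-reg-ufd}(3), $\nu_{E_i} = \ord_\m$ on $\Frac(A)$, so $\nu_{E_i}(a_1) = \ord_\m(a) - r$ and $\nu_{E_i}(b_1) = \ord_\m(b) - 2r$; since $\min\{2\ord_\m(a),\ord_\m(b)\} = \lambda_{q_{i-1}}(Y_{i-1})$, we get $\min\{2\nu_{E_i}(a_1), \nu_{E_i}(b_1)\} = \lambda_{q_{i-1}}(Y_{i-1}) - 2r \in \{0,1\}$, equal to $1$ when the multiplicity is odd and $0$ when it is even. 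This is exactly the dichotomy of Lemma \ref{normal-codim1} applied to $R = \cO_{Z_i, \xi_{E_i}}$.

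For the multiplicity statements, I would compute the pullback $\psi_N^* E'_i$ (equivalently $\psi_i^* E_i$ localized at $\xi_{E_i}$, since $Z_N \to Z_i$ and $Y_N \to Y_i$ are isomorphisms near those generic points) as a Cartier divisor on $Y_N$. On the chart above, $E_i = V(s\Ast)$ and $\psi_i^* E_i = V(s B_1)$; the integer $m_i$ from Lemma \ref{ZN_Z} records the multiplicity of $E'_i$ in $Z_N \times_Z E_0$, so $\Theta'_i$ has multiplicity $m_i \cdot e$ in $\cE$ where $e$ is the ramification index of $B_1 \otimes_{\Ast} \cO_{Z_i,\xi_{E_i}}$ over $\cO_{Z_i,\xi_{E_i}}$. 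In case (2), $\lambda_{q_{i-1}}(Y_{i-1})$ odd forces $\nu_{E_i}(b_1) = 1$ and $\nu_{E_i}(a_1) > 0$, so by Lemma \ref{normal-codim1}(2)–(3) the extension is totally ramified of index $2$, hence $e = 2$, $\Theta'_i \to E'_i$ is a bijection on points, and being a map of regular curves (both $Z_N$ and $Y_N$ are normal surfaces, $\Theta'_i$ is a reduced Cartier divisor) it is an isomorphism; the multiplicity is $2m_i$. In case (3), $\lambda_{q_{i-1}}(Y_{i-1})$ even, so $\min\{2\nu_{E_i}(a_1),\nu_{E_i}(b_1)\} = 0$; if moreover $\Theta'_i$ is irreducible then the fibre over $\xi_{E_i}$ is a field and $e = 1$, giving multiplicity $m_i$, and if $\chara(k(q_0)) \neq 2$ then $\nu_{E_i}(\Delta_{B_1/\Ast}) = \nu_{E_i}(b_1) = 0$ after completing the square, so $\psi_i$ is étale at $\xi_{E_i}$, i.e. $\Theta'_i \to E'_i$ is generically étale. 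For part (1), reducibility of $\Theta'_i$ is the statement that the fibre $\psi_N^{-1}(\xi_{E'_i})$, equivalently the étale (or split) degree-$2$ extension given by $y_1^2 + a_1 y_1 + b_1$, has two components; this is a birational condition so it is equivalent to reducibility of $\psi_i^{-1}(E_i)$, and in the reducible (necessarily unramified, hence even $\lambda$) case each component maps birationally, hence isomorphically as above, to $E'_i$ with multiplicity $m_i$.

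The main obstacle I anticipate is bookkeeping the relation between the multiplicity $m_i$ computed for the single blowup $Z_i \to Z_{i-1}$ and the multiplicity of $E'_i$ in the iterated divisor $Z_N \times_Z E_0$, together with checking that all the local computations at $\xi_{E_i}$ genuinely survive the further blowups $Z_N \to Z_i$ — this requires observing that the centres $q_j$ for $j \geq i$ are closed points of $Z_j$ and hence the strict transform $E'_i$ and its generic point are untouched, so that $\cO_{Z_N, \xi_{E'_i}} = \cO_{Z_i, \xi_{E_i}}$ and likewise for $Y$. The identification $\Theta'_i \to E'_i$ "isomorphism" versus merely "birational bijection" also needs the regularity of $\Theta'_i$, which one gets from Lemma \ref{normal-codim1} telling us $B_1 \otimes \cO_{Z_i,\xi_{E_i}}$ is a DVR, hence $\Theta'_i$ is regular at its generic points and, being a prime divisor on the normal surface $Y_N$, is in fact regular in codimension appropriate to conclude; alternatively one invokes that a finite birational morphism to a regular (hence normal) curve is an isomorphism. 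Everything else is a direct application of Lemmas \ref{normal-codim1}, \ref{ZN_Z} and Propositions \ref{bl-reg-ufd}, \ref{lemma2.12}.
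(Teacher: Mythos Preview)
Your proposal is correct and follows essentially the same approach as the paper. The paper's proof reduces to the generic point of $E_i$ via the observation that $Y_N\to Y_i$ is an isomorphism there, notes that the multiplicity in $\cE$ is $m_i$ times the ramification index over $E'_i$, invokes the fact that a finite birational morphism onto the normal curve $E'_i$ is an isomorphism, and then says everything follows from ``direct observations'' on the affine equation~\eqref{eq:z1}; you have simply spelled out those direct observations explicitly via Lemma~\ref{normal-codim1} and the valuation computation $\nu_{E_i}=\ord_\m$ from Proposition~\ref{bl-reg-ufd}(3).
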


\begin{proof} Note that any finite birational morphism onto $E'_i$ is an
  isomorphism because the latter is normal. So it is enough to
  study $\Theta'_i$ at its generic points. Note that 
the multiplicity in $\cE$ of an irreducible component of $\Theta'_i$  
  is equal to $m_i$ multiplied by the ramification index over   $E'_i$. 

  As $Y_N\to Y_i$ is an isomorphism
  over an open neighborhood of the generic point of $E_i$, it is enough to
  show the statement for $\Theta_i:=\psi_i^{-1}(E_i)$. Moreover 
  without loss of generalities we can assume that $i=1$.
  Then all the statements of the proposition follow from direct observations
  on an affine equation~\eqref{eq:z1} of $Y_1$ 
\end{proof}

\begin{remark} Keep the notation of Propositions~\ref{birational} and 
  \ref{lemma2.12}.
  \begin{enumerate}[\rm (1)] 
    \item The scheme $\psi_1^{-1}(E_1)$ is irreducible if and only if
  the polynomial
  \[ T^2+\bar{a}_1T+\bar{b}_1\in k[\bar{t}_1][T]\]
  (where $\bar{*}$ means the class modulo $s$)  is irreducible.
  The same statement applies
  to $\psi_i^{-1}(E_i)$ if a similar equation of $Y_i$ is known. 
\item Suppose $\chara(k(q_0))=2$ and $\Theta_1$ is irreducible.
  The $\Theta'_1\to E'_1$ is generically \'etale if and only if
  $2\ord_\m(a)=\lambda_{q_0}(Y)$.
\item The structure of $\Theta'_1$, in the case (3) of Proposition~\ref{birational}
  is more complicate to describe. We have to determine inductively the
  strict transform of $\Theta_1$ after each blowing-up.   
  \end{enumerate}
\end{remark}

\begin{proposition} \label{inter} Keep the notation of Lemma~\ref{ZN_Z} and 
  Proposition~\ref{birational}. 
Let $0\leq i\neq j\leq N$ be such that $E'_i\cap E'_j=\{ q \}\neq\emptyset$.  
 \begin{enumerate}[\rm (1)] 
      \item If $\Theta'_i$ has two irreducible components
     $\Theta'_{i,1}, \Theta'_{i,2}$  and similarly for $\Theta'_j$, then
     up to renumbering, 
     $\Theta'_{i,1}$ and $\Theta'_{j,1}$ (resp. $\Theta'_{i,2}$ and $\Theta'_{j,2}$) 
     intersect transversely at a single $k(q)$-rational point, 
     and $\Theta'_{i, 1}\cap \Theta'_{j,2}=\Theta'_{i, 2}\cap \Theta'_{j,1}
     =\emptyset$.
   \item If $\Theta'_i$ has two irreducible components  and $\Theta'_j$ is
     irreducible,  
     then $\Theta'_j$ intersects transversely each of these components at a single
     $k(q)$-rational point. These intersection points can be the same. 
   \item Suppose that $\Theta'_i$ and $\Theta'_j$ are both irreducible. Denote by
     $e_i, e_j$ the respective ramification indexes of $\psi_N$ at the
     generic points of $\Theta'_i, \Theta'_j$. Then we have 
     \[
e_{i}[k(\Theta'_i): k(E'_i)]=e_j[k(\Theta'_j): k(E'_j)] =2 
   \]
   and
   \[
   \sum_{p\in\psi_N^{-1}(q)} i_p(\Theta'_i, \Theta'_j)e_{i}e_j[k(p):k(q)]=2
     \]
where $i_p(\Theta'_i, \Theta'_j)$ denotes the intersection multiplicity of
  $\Theta'_i$ and $\Theta'_j$ at $p$. 
 \end{enumerate}
 \end{proposition}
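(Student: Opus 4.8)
The plan is to reduce everything to a local calculation on the regular surface $Z_N$ near the point $q = E'_i \cap E'_j$, then pull back via the degree-$2$ morphism $\psi_N$. Since $\cup_k E'_k$ is a tree with transverse intersections (Lemma~\ref{ZN_Z}(2)), at $q$ only the two components $E'_i, E'_j$ meet, and there is a regular system of parameters $u, v$ of $\cO_{Z_N, q}$ with $E'_i = V(u)$, $E'_j = V(v)$ locally. After shrinking to a small affine neighborhood, $Y_N$ is given by an equation $y^2 + \alpha y + \beta = 0$ over $\cO_{Z_N}$, and the multiplicity/splitting behaviour of $\psi_N$ over the generic points of $E'_i$ and $E'_j$ is governed, exactly as in Lemma~\ref{normal-codim1} and Proposition~\ref{birational}, by $\ord$ along $u$ and along $v$ of $\alpha, \beta$. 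I would first record, in each of the three cases, what the local picture of $\Theta'_i$ (resp. $\Theta'_j$) looks like over a punctured neighborhood of $q$ in $E'_i$ (resp. $E'_j$): in the reducible case the two components are the graphs of the two roots of the equation, which are distinct away from the branch locus; in the irreducible case $\Theta'$ is either a ramified double cover or an inseparable/separable-but-connected double cover of $E'$.

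For parts (1) and (2), the key point is that $q$ itself is \emph{not} on the branch locus in the relevant sense — more precisely, by Proposition~\ref{reg-lambda}(3) and the description in Proposition~\ref{birational}, when $\Theta'_i$ is reducible the fibre $\psi_N^{-1}(q)$ consists of two $k(q)$-rational points (the equation $T^2+\bar\alpha T+\bar\beta$ over $k(q)$ factors into distinct linear factors), and each of the two components of $\Theta'_i$ passes through exactly one of them; the same for $\Theta'_j$. Transversality then follows because $\Theta'_{i,1} \to E'_i$ and $\Theta'_{j,1} \to E'_j$ are isomorphisms near $q$ (each component maps isomorphically by Proposition~\ref{birational}(1)), so locally $\Theta'_{i,1} = V(y - g_i(u,v))$ and $\Theta'_{j,1} = V(y - g_j(u,v))$ pulled back from $V(u)$, $V(v)$ respectively, and these meet transversely at the single point where both $u$ and $v$ vanish over the chosen sheet. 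The incidence statement $\Theta'_{i,1}\cap\Theta'_{j,2} = \emptyset$ etc.\ is then just the observation that the two sheets over $q$ are separated. Part (2) is the degenerate version where $\Theta'_j$ is irreducible (so $\psi_N^{-1}(q)$ may be a single point or the double cover is ramified/inseparable at $q$): $\Theta'_j \to E'_j$ is still an isomorphism or a purely ramified/inseparable cover, so over each of the (at most two) points of $\psi_N^{-1}(q)$ it contributes one branch, meeting each component of $\Theta'_i$ transversely; the two intersection points coincide precisely when $\psi_N^{-1}(q)$ is a single point.

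For part (3), the degree formula $e_i[k(\Theta'_i):k(E'_i)] = 2$ is just multiplicativity of degree in the tower $\Theta'_i \to E'_i$ composed with $\psi_N$ restricted to $\Theta'_i$, together with $\deg\psi_N = 2$ and the fact that $E'_i \to E_i$ is an isomorphism; the ramification index $e_i$ is the one appearing in Lemma~\ref{normal-codim1}. The intersection-number identity is the projection formula: $\psi_N$ is finite flat of degree $2$, so for the Cartier divisors $E'_i, E'_j$ on the regular surface $Z_N$ meeting transversely at the single $k(q)$-rational point $q$ we have $(\psi_N^* E'_i \cdot \psi_N^* E'_j) = 2\,(E'_i \cdot E'_j) = 2$, while $\psi_N^* E'_i = e_i \Theta'_i$ (as Cartier divisors, $\Theta'_i$ being reduced irreducible here) and similarly $\psi_N^* E'_j = e_j \Theta'_j$; expanding $(e_i\Theta'_i \cdot e_j\Theta'_j) = e_i e_j \sum_{p \in \psi_N^{-1}(q)} i_p(\Theta'_i,\Theta'_j)[k(p):k(q)]$ gives the claim. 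I would be slightly careful here that $\psi_N^{*}E'_i$ really equals $e_i\Theta'_i$ on the nose and not $e_i\Theta'_i$ plus components supported elsewhere — but $E'_i = V(u)$ locally and $\psi_N^{-1}(V(u))$ is cut out by the pullback of $u$, whose divisor is $e_i \Theta'_i$ by definition of $e_i$, so this is fine.

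The main obstacle I anticipate is part (2) in residue characteristic $2$: when $\Theta'_j \to E'_j$ is generically inseparable, one has to check that the intersection of $\Theta'_j$ with a component of $\Theta'_i$ is still transverse and $k(q)$-rational, i.e.\ that no "hidden tangency" is introduced by inseparability at $q$. This should be handled by the same local-coordinate argument — the equation of $\Theta'_j$ near $q$ is still pulled back from $V(v)$ up to an isomorphism onto $E'_j$, so the tangent directions at an intersection point are still the coordinate directions $u$ and $v$ — but it is the place where one must be most careful and cannot simply wave at the characteristic-zero intuition. Everything else is bookkeeping with the local equations~\eqref{eq:z1} and the facts already established in Propositions~\ref{reg-lambda}, \ref{lemma2.12}, and~\ref{birational}.
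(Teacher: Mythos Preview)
Your argument for part~(3) is correct and is exactly the paper's method: the projection formula for the finite flat degree-$2$ map $\psi_N$ gives $(\psi_N^*E'_i\cdot\psi_N^*E'_j)=2(E'_i\cdot E'_j)=2$, and expanding $\psi_N^*E'_i=\sum_\alpha e_{i,\alpha}\Theta'_{i,\alpha}$ yields the displayed identity.

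For parts~(1) and~(2), however, your local-coordinate approach has a genuine gap, and the paper takes a different route. Your claim that ``when $\Theta'_i$ is reducible the fibre $\psi_N^{-1}(q)$ consists of two $k(q)$-rational points'' does not follow from Propositions~\ref{reg-lambda}(3) or~\ref{birational}: those tell you that the two components of $\Theta'_i$ map isomorphically to $E'_i$, but nothing prevents the two roots of $T^2+\bar\alpha T+\bar\beta$ over $k(E'_i)$ from coinciding at $q$ (that is, $\lambda_q(Y_N)=1$ is not excluded by those references). If they did coincide, all four components in case~(1) would pass through a single point $p$, and your transversality argument would collapse. Likewise the ``main obstacle'' you flag in part~(2) --- the inseparable case in characteristic $2$ --- is a real obstruction to your coordinate-based transversality claim, not just a detail to be checked.

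The paper bypasses both issues by applying the projection formula \emph{uniformly} to all three cases, not just to~(3). One writes
\[
\sum_{p\in\psi_N^{-1}(q)}\sum_{\alpha,\beta} i_p(\Theta'_{i,\alpha},\Theta'_{j,\beta})\,e_{i,\alpha}e_{j,\beta}\,[k(p):k(q)]=2,
\]
observes that every term is a nonnegative integer, and then invokes going-down for the flat map $\psi_N$ (every point of $\psi_N^{-1}(q)$ lies on some $\Theta'_{i,\alpha}$ \emph{and} on some $\Theta'_{j,\beta}$). The combinatorics then forces the conclusions of (1), (2), (3) with no appeal to explicit equations and no case distinction on the residue characteristic. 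In particular, in case~(1) the single-point scenario is ruled out because it would make the sum at least $4$; in case~(2) transversality and $k(q)$-rationality fall out because each of the two components of $\Theta'_i$ must contribute at least $1$ to a total of $2$. You already have this tool in hand for~(3); simply use it for (1) and (2) as well and the characteristic-$2$ worry disappears.
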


\begin{proof} Let $\{ \Theta'_{i, \alpha}\}_\alpha, \{ \Theta'_{j, \beta} \}_{\beta}$
 be the respective irreducible components of $\Theta'_i$ and $\Theta'_j$.
Let $e_{i,\alpha}$ be the ramification index of $\psi_N$
    at the generic point of $\Theta'_{i, \alpha}$, similarly for $e_{j, \beta}$. We have 
\[
\sum_{\alpha} e_{i,\alpha}[k(\Theta'_{i,\alpha}): k(E'_i)]=[K(Y): K(Z)]=2. 
\]
Similarly for $\Theta'_j$. 
By general intersection theory, we have  
\[ \sum_{p\in\psi_N^{-1}(q)} i_p(\psi_N^*E'_i, \psi_N^*E'_j)[k(p):k(q)]
=[K(Y): K(Z)]i_{q}(E'_i, E'_j)=2, 
\] 
and by definition $\psi_N^*E'_i=\sum_{\alpha} e_{i, \alpha} \Theta'_{i,\alpha}$
(similarly for $\psi_N^*E'_j$). 
Therefore
\[
\sum_{p\in\psi_N^{-1}(q)} \sum_{\alpha, \beta} i_p(\Theta'_{i,\alpha}, \Theta'_{j, \beta})
e_{i, \alpha}e_{j, \beta}  [k(p):k(q)]=2. 
\] 
All coefficients in the above equality are positive except the intersection
multiplicities $i_p$ which can be zero. This leaves very
few possibilities for the combinations of the various coefficients. 

We also notice that through any point of $\psi_N^{-1}(q)\cap \Theta'_i$ passes
a point of $\psi_N^{-1}(q)\cap \Theta'_j$, by  the going down property of
$Y_N\to Z_N$ (see \cite{SP}, tag 00HU). The proposition is then proved by
straightforward case-by-case analysis.  
 \end{proof}

\subsection{A complete example of resolution of singularities}

\begin{example} \label{ex2} 
  Let $Z=\PP^1_{\Z_2}$ be the projective line over the $2$-adic integers.
  Fix two integers $n, m$ such that at least one of them is non-zero.
  Let $Y$ be the projective scheme over $\Z_2$ associated to  the affine equation  
  $$
y^2=(s^2-2)^3+16ms^2+n^2. 
  $$
Replacing $y$ with $y+s^3+n$, we get 
\begin{equation}
  \label{eq:init}
  y^2+2(s^3+n)y+2(3s^4 +ns^3 +2m_1s^2 + 4)=0
\end{equation}
where $m_1:=-(4m + 3)$. 
This is a normal Weierstrass model of  a genus 2 curve 
(\cite{LRN}, Lemma 2.3.1(c)).
The closed fiber of $Y$ over $\Spec \Z_2$ is a double (multiplicity
$2$) projective line over $\mathbb F_2$. Denote it by  $2\Theta_0$.
The aim of this example is to determine
the minimal resolution of singularities of $Y$ 
when $8 \nmid n$. 
Denote by $\boxed{t=2}$ the uniformizing element of $\Z_2$.

For each singular point of $Y$, the notations $Y_i$ and $Z_i$ refer 
to the blowing-ups described by Theorem~\ref{desing-main}. We
use the same notation for different singular points for simplicity.
When we describe the equations of $Y_i$,
$D_+(\tilde{f})$ always means open subset in $Z_{i+1}$
(for instance, we frequently consider  $D_+(\tilde{s})$ as open
subset of $Z_i$ for $i>1$). 
\medskip

(I) We first deal with the point $q_\infty$ corresponding to $1/s=0$.
Let $x=1/s$ and $z=y/s^3$, we get:  
\begin{equation} \label{eq:eq5} 
  z^2+2(1+nx^3)z+2x^2(3 + nx +2m_1x^2 +4x^4)=0.
\end{equation} 
with $\lambda_{q_\infty}(Y)=2=\lambda_{q_\infty}(z)$.  

(I.1) {\it First  blowing-up}. We blow-up $Y_1\to Y$ along
the point lying over $q_\infty$.
Over $D_+(\tilde{x})$, we have  $t=xt_1$ and 
\begin{equation} \label{eq:eq6}
z_1^2+t_1(1+nx^3)z_1+t_1x(3+nx+2m_1x^2+ 4x^4)=0. 
\end{equation}
There is a unique singular point $p_{\infty, 1}=(t_1=x=z_1=0)$ with
multiplicity $\lambda_{p_{\infty,1}}(Y_1)=2$. 

Over $D_+(\tilde{t})$, the equation is ($x_1=x/t$) 
$$
w_1^2+(1+nt^3x_1^3)w_1+tx_1^2(3+ntx_1+2m_1t^2x_1^2+4t^4x_1^4)=0
$$
which defines two regular points at $x_1=0$ (poles of $t_1$).

The exceptional locus $\Theta_{ 1}$ (with the reduced structure) of
$Y_1\to Y$ is union of two projective lines over $\F_2$, of multiplicity $1$,
intersecting at $p_{\infty,1}$.  
The strict transform of $\Theta_0$ intersects $\Theta_1$ at $t_1=0$
which is $p_{\infty, 1}$. 
\smallskip

(I.2)  {\it Second blowing-up. } We blow-up $Y_2\to Y_1$ along $p_{\infty, 1}$.
We use Equation~\eqref{eq:eq6}.   
Over $D_+(\tilde{x})\subset Z_1$ we have (letting $t_1=xt_2$) 
$$
z_2^2+t_2(1+nx^3)z_2+t_2(3+nx+2m_1x^2 + 4x^4)=0. 
$$
Over $D_+(\tilde{t}_1)$, we have $x=t_1x_1$ and 
$$
w_2^2+(1+nt_1^3x_1^3)w_2+x_1(3+nt_1x_1+2m_1t_1^2x_1^2 + 4t_1^4x_1^4)=0. 
$$
We see that the exceptional locus is a smooth conic
of multiplicity $2$ (we have $t=x^2t_2$). 
Its intersection with the strict transform of $\Theta_0$ is the point $t_2=z_2=0$.  
The intersection with the strict transform of $\Theta_1$
are the points $x_1=0,  w_2= 0, 1$.    

The exceptional locus of $Y_1\to Y$ and $Y_2\to Y_1$ as well as the strict
transforms of $\Theta_0$ 
are as shown in the figure below. In $Y_2$, the components are isomorphic to
$\PP^1_{\mathbb F_2}$, the intersections are transverse at rational points. 
  \begin{center} 
  \includegraphics[scale=0.45]{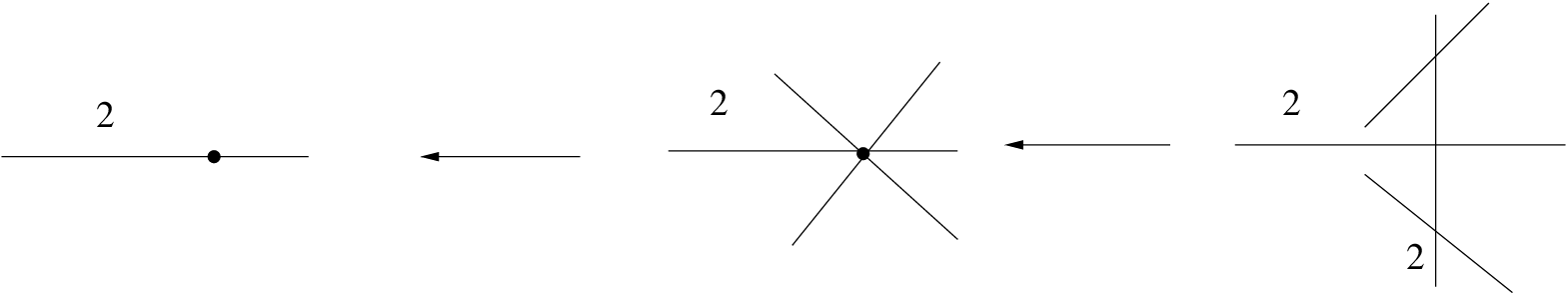}
  \end{center}  

The numbers indicate the multiplicities of the components. 
\medskip 

(II) Now come back to Equation~\eqref{eq:init}. The singularities depend on
the parity of $n$.  Indeed, when $n$ is even, there is a unique singular
point $p_0$, lying  above $q_0=(s=2=0)$ with $\lambda_{q_0}(Y)=3$. 
When $n$ is odd, there are two singular points: $p_0$ with $\lambda_{q_0}(Y)=2$
and $p_{*}$ lying over $q_{*}=(s+1=2=0)$, with 
$\lambda_{q_{*}}(Y)=2$.  

We start with the simpler case $\boxed{n \text{ is odd}}$.

(II.1) {\it First blowing-up.} We blow-up $Y_1\to Y$ along $p_0$. We
use Equation~\eqref{eq:init}.  
The equation of $Y_1$ over $D_+(\tilde{s})$  is ($2=t=st_1$): 
\begin{equation} \label{eq:eq7} 
y_1^2+t_1(s^3+n)y_1+st_1(3s^2 +ns +2m_1st_1 + t_1^2)=0.
\end{equation} 
There is a unique singular point $p_1$, lying 
over $q_1=(s=t_1=0)$, with $\lambda_{q_1}(Y_1)=2$.
Over $D_+(\tilde{t})$, we have $s=s_1t$ and
$$w_1^2+(s_1^3t^3+n)w_1+t(3s_1^4t^2 +ns_1^3t +2m_1s_1^2 + 1)=0$$ 
which is regular at $s_1=0$ (two points, poles of $t_1$). 

The exceptional locus $\Theta_1$ of $Y_1\to Y$ above $p_0$ is
a reduced degenerate conic as $n$ is odd. 
It meets the strict transform of $\Theta_0$ at  $p_1$.   
\smallskip

(II.2) {\it Second blowing-up. } We blow-up $Y_2\to Y_1$ along $p_1$.
We use Equation~\eqref{eq:eq7}. 
Over $D_+(\tilde{s})\subset Z_2$, we have $t_1=st_2$ and 
\begin{equation} \label{eq:eq81}
  y_2^2+t_2(s^3+n)y_2+t_2s(n+3s +m_1st_2 + st_2^2)=0.
\end{equation} 
Over $D_+(\tilde{t}_1)$, the equation of $Y_2$ is ($s=s_2t_1$) 
$$
w_2^2+(n+s_2^3t_1^3)w_2+s_2t_1(3s_2^2t_1 +ns_2+m_1s_2t_1+ t_1)=0, 
$$
which is regular at $s_2=0$. 

The exceptional locus $\Theta_2$ is union of two projective lines over $\F_2$,
of multiplicity $2$ and intersecting at $p_2=(t_2=y_2=s=0)$ with
$\lambda_{p_2}(Y_2)=2$. They have multiplicity $2$ because $t=s^2t_2$. The intersection with the strict transform of $\Theta_0$  is $p_2$, 
and with that of $\Theta_1$ is $s_2=0$ (two points). 
\smallskip
 
(II.3) \emph{Third blowing-up}. We blow-up $Y_2$ along the unique singular point
$p_2$ (lying over $p_0$). 
Over $D_+(\tilde{s})\subset Z_3$, we have ($t_2=st_3$) 
$$
y_3^2+t_3(s^3+n)y_3+t_3(n+3s +m_1s^3t_3 + s^3t_3^2)=0.
$$
Over $D_+(\tilde{t}_2)$, we have ($s=s_3t_2$) 
$$
 w_3^2+(s_3^3t_2^3+n)w_3+s_3(n+3s_3t_2 +m_1s_3t_2^2+ s_3t_2^3)=0.
$$
 The exceptional locus $\Theta_3$ is a smooth conic,
 intersecting the strict transform of 
 $\Theta_2$ at the two rationals points $s_3=0$.
 It intersects the strict transform of $ \Theta_0$
 at the   rational point $t_3=0$.  As $t=s^3t_3$, $\Theta_3$ has multiplicity $3$.
 We finished solving the singularity $p_0$.
We represent the three blowing-ups above in the picture below. 
  \begin{center} 
  \includegraphics[scale=0.3]{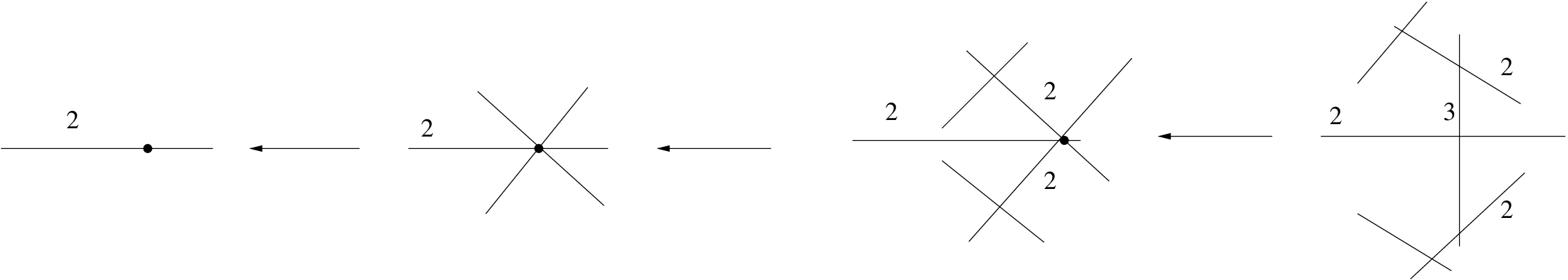}
  \end{center}
  In the last step all components are isomorphic to $\PP^1_{\F_2}$ and the
  intersection points are rational and transverse.  
  \medskip
  
 (III) The resolution of $p_*$ is obtained by one single 
blowing-up. The exceptional locus consists in a copy of $\PP^1_{\mathbb F_2}$
of multiplicity $1$, meeting transversely the strict transform of $\Theta_0$
at a rational point. 

 The complete resolution of the projective
 Weierstrass model defined by Equation~\eqref{eq:init}  is of
 type $[{\tt IV}^*-{\tt I}^*_{1}-\alpha]$ in \cite{NU}, page 175.  
\medskip

(IV) Suppose from now on that $\boxed{n=2n_1 \text{\ is even}}$. We already solved $p_{\infty}$ in Part (I). It remains to solve the singularity at $p_0$. We have 
$\lambda_{p_0}(Y)=3$ given by Equation~\eqref{eq:init}.
\smallskip 

(IV.1) \emph{First normalized blowing-up.} We use Equation~\eqref{eq:init}.
Let $Y_1\to Y$ be the normalized 
blowing-up along $p_0$. The equation over $D_+(\tilde{s})$
is
\begin{equation} \label{eq:eq9} 
y^2_1+st_1(s^2+n_1t_1)y_1+t_1(3s^3 +n_1s^3t_1 +m_1s^2t_1 + st_1^2)=0.
\end{equation}
The point at infinity, $s_1=0$ in $D_+(\tilde{t})$,
is regular in $Y_1$.  The exceptional locus $\Theta_1$
is $2\PP^1_{\mathbb F_{2}}$. It intersects the strict transform of $\Theta_0$ at
$p_1=(s=t_1=y_1=0)$, with $\lambda_{p_1}(Y_1)=4$. 
\smallskip

(IV.2) \emph{Normalized blowing-up along $p_1$.} We use Equation~\eqref{eq:eq9}. Over $D_+(\tilde{s})\subset Z_1$, we have $t_1=st_2$ and 
\begin{equation} \label{eq:eq8} 
y_2^2+st_2(s+n_1t_2)y_2+t_2(3 +n_1st_2 +m_1t_2 +t_2^2)=0.
\end{equation} 
The point at infinity $s_2=0$ ($s=t_1s_2$), is regular in $Y_2$ 

The exceptional locus $\Theta_2$ is a rational curve over $\F_2$ of
multiplicity $2$, with a cusp $p_2$ at $t_2=1$. 
The intersection of $\Theta_2$ with the strict transform of
$\Theta_0$ is $t_2=0$, and with that of $\Theta_1$ at $s_2=0$. 
The strict transforms of $\Theta_0$ and of $\Theta_1$ do not intersect each
other. 
 
The only possible singular point of $Y_2$ lying over $p_0$ is $p_2$.
Let us  compute $\lambda_{p_2}(Y_2)$. We make  the change of variables 
$t_2=x+1$ and $y_2=z_2+1$. We have 
\begin{equation} \label{eq:eq11}
z_2^2 + n_1s(1+x)^2z_2+x^3-ms^4(1+x)^4=0. 
\end{equation}

The strict transform of $\Theta_1$ in $Y_2$ is 
a projective line over $\F_2$, with self-intersection $-1$.
It is an exceptional divisor.
\smallskip

(IV.3.1) Suppose $\boxed{n_1 \text{\ is odd}}$. Then $\lambda_{p_2}(Y_2)=2$. 
After blowing-up $p_2$, the exceptional locus
$\Theta_3$ is a union of two projective lines over $\F_2$, of multiplicity $2$,
intersecting each other at a rational 
point $p_3$. It intersects the strict transform of $\Theta_2$ at $p_3$.  Moreover the point $p_3$ is regular in $Y_3$. The minimal regular model over $\Z_2$ of 
the genus $2$ curve defined by Equation~\eqref{eq:init} 
has type $[2{\tt IV}-2]$ in \cite{NU}, page 165.
  \begin{center} 
  \includegraphics[scale=0.4]{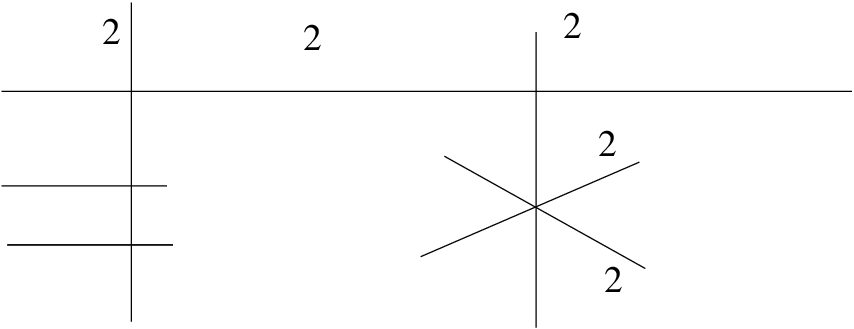}
  \end{center}  

\smallskip

(IV.3.2) Suppose $\boxed{n_1=2n_2 \text{\ is even}}$, then $\lambda_{p_2}(Y_2)=3$. We have 
an equation
\begin{equation}\label{eq:eq12}
z_2^2 + n_2s^3(1+x)^3z_2+x^3-ms^4(1+x)^4=0. 
\end{equation}
with $p_2=(s=x=z_2=0)$. The blowing-up $Y_3\to Y_2$ along $p_2$ contains
a unique singular point $p_3\in D_+(\tilde{s})$ defined by $s=x/s=z_2/s=0$,
with multiplicity $\lambda_{p_3}(Y_3)=4$.
The exceptional locus $\Theta_3$ is isomorphic to $\PP^1_{\F_2}$
and has multiplicity $4$.    
\smallskip

(IV.3.3) For the normalized blowing-up $Y_4\to Y_3$ along $p_3$,
the equation of $Y_4$ over $D_+(\tilde{s})\subset Z_4$ is
$$
z_3^2 + n_2s^2(1+sx_1)^3z_3+x_1^3s-ms^2(1+sx_1)^4=0. 
$$
Blow-up-normalize $Y_4\to Y_3$ along $p_3$ gives an equation
over $D_+(\tilde{s})$
$$
z_4^2 + n_2(1+s^2x_2)^3z_3+x_2^3-m(1+s^2x_2)^4=0. 
$$
If $\boxed{n_2 \text{ \ is odd}}$  (so $4$ divides exactly $n$), then
the closed fiber of the Lipman's resolution of the initial Weierstrass model is
 
  \begin{center} 
  \includegraphics[scale=0.4]{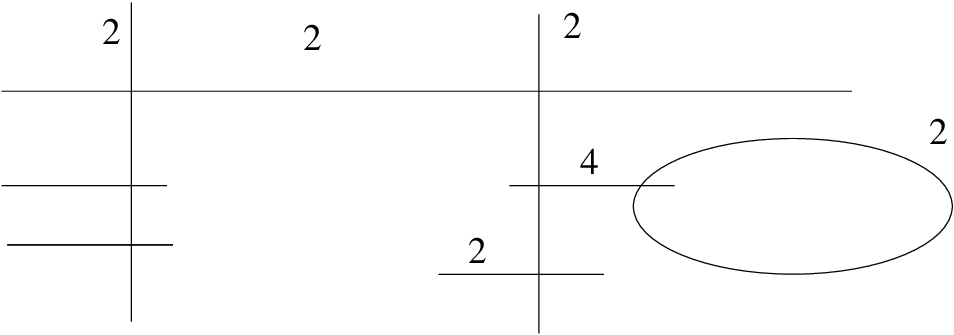}
  \end{center}  

\noindent where the last (elliptic) component is given by the equation 
$$
w^2+w+x^3+\bar{m}=0
$$
over $\F_2$. After contracting the strict transforms of $\Theta_1$ and $\Theta_3$, we find
the minimal regular model which has
type $[2{\tt I}_{0-2}]$ in \cite{NU}, page 159,  
  \begin{center} 
  \includegraphics[scale=0.4]{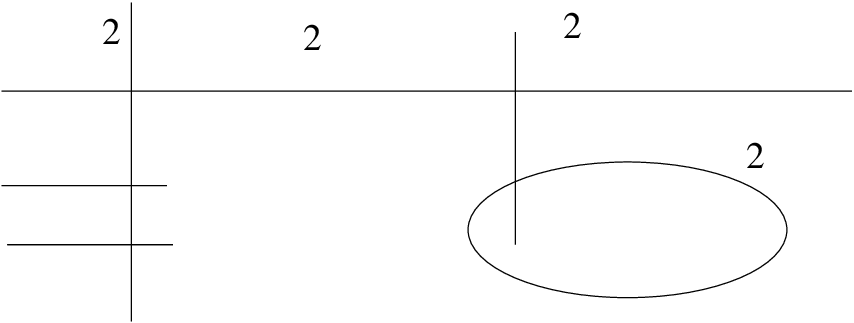}
  \end{center}  
This type is denoted by $[2{\tt I}_0-3]$ in \cite{LC}, pp. 71-72,   
for compatibility with the more general types $[2{\mathcal K}-\ell]$. 
\end{example} 

\begin{remark} The genus $2$ curve in the above example is that
  of \cite{LC}, Exemple 1 page 74. 
  But the resolution of singularities there is incorrect. 
  Nevertheless, the integer 
  $d$ defined in {\it op. cit.}, Th\'eor\`eme 1, page 70, is equal to
  $5$ in Case (IV.3.3). It still satisfies the inequalities  stated at the beginning  
  of that example.  

  To find the Artin conductor of the Jacobian of our genus $2$ curve, we need 
  not only the minimal resolution  $X\to Y$, but also the integer $d$ mentioned
  above. The latter can be read from the closed fiber of the last $Z_i$
  when there is no more singularities.
    We will come back to this question in \cite{LR}. 
\end{remark}

\subsection{Simultaneous resolution of singularities}
 Let $f : X\to Y$ be a finite morphism of integral noetherian
excellent surfaces. S. Abhyankar \cite{Ab}
asked for the existence of  resolutions of singularities
 $\tilde{X}\to X$ and  $\tilde{Y}\to Y$ such that $f$ extends to a finite
morphism $\tilde{X}\to \tilde{Y}$.  He proved that the answer is no if
 $\deg f >3$, and is positive when $f$ is a Galois cover of degree
 $\le 3$ of algebraic surfaces over a field of characteristic
 different from $\deg f$. For normal surfaces fibered over a discrete
 valuation ring, similar results are obtained in \cite{LL}, \S 7
 without assumption on the residue characteristics.

 \begin{corollary}\label{sim} Let $f : X\to Y$ be a finite morphism of degree $2$
   of integral noetherian excellent surfaces.
   Then there are resolutions of singularities $\tilde{X}\to X$ and
   $\tilde{Y}\to Y$ such that $f$ extends to a finite morphism
   $\tilde{X}\to \tilde{Y}$. 
 \end{corollary}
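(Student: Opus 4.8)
The plan is to reduce to the situation of Theorem~\ref{desing-main}, where the base is a \emph{regular} excellent surface and the double cover is \emph{normal}, and then to invoke Lipman's theorem \cite{Lip} together with Theorem~\ref{desing-main}.

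First I would replace $Y$ by a resolution of its singularities: since $Y$ is an excellent surface, normalizing it (a finite morphism, as $Y$ is excellent) and then running Lipman's sequence \cite{Lip} produces a proper birational morphism $\rho\colon Y_1\to Y$ with $Y_1$ regular, the finiteness of the sequence forcing its last term to be regular. Next, let $X_1$ be the normalization of $Y_1$ in the function field $K(X)$. Exactly as in Example~\ref{exd_2}, $X_1\to Y_1$ is then a normal double cover: it is finite because $Y_1$ is excellent, $X_1$ is integral and normal of dimension $2$, hence Cohen--Macaulay, hence flat over the regular surface $Y_1$, and its generic rank equals $[K(X):K(Y)]=2$ (this is where the hypothesis $\deg f=2$ enters; the quadratic extension $K(X)/K(Y)$ is allowed to be inseparable). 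Finally, since $\mathcal O_X$ is integral over $\mathcal O_Y$ one obtains a natural $Y_1$-morphism $X_1\to X\times_Y Y_1$, hence a proper birational morphism $X_1\to X$ for which $X_1\to Y_1\xrightarrow{\rho}Y$ agrees with $X_1\to X\xrightarrow{f}Y$.

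Now I would apply Lipman's theorem \cite{Lip} to the normal surface $X_1$: there is a \emph{finite} sequence of normalized blowing-ups of reduced singular points $\tilde X:=X_1^{(N)}\to\cdots\to X_1^{(0)}=X_1$, and, the sequence being finite, $\tilde X$ is regular. By Theorem~\ref{desing-main}(2) applied to the normal double cover $X_1\to Y_1$, this sequence fits into a commutative ladder lying over a sequence $\tilde Y:=Y_1^{(N)}\to\cdots\to Y_1^{(0)}=Y_1$ in which every $Y_1^{(i+1)}\to Y_1^{(i)}$ is the blowing-up of a reduced closed point and every vertical arrow $X_1^{(i)}\to Y_1^{(i)}$ is a normal double cover; in particular $\phi:=\psi_N\colon\tilde X\to\tilde Y$ is finite. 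Moreover $\tilde Y$ is regular, being obtained from the regular surface $Y_1$ by finitely many blowing-ups of closed points (Proposition~\ref{bl-reg-ufd}). Then $\tilde X\to X_1\to X$ is a resolution of singularities of $X$ and $\tilde Y\to Y_1\to Y$ is one of $Y$, and the ladder, together with the compatibility noted in the previous paragraph, shows that $f$ extends to the finite morphism $\phi\colon\tilde X\to\tilde Y$.

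The only genuinely non-formal point is the reduction to a normal double cover of a regular surface carried out above --- verifying that after resolving the base and normalizing one really is in the setting of Theorem~\ref{desing-main}, so that both it and Lipman's theorem apply; everything else is bookkeeping with proper birational morphisms.
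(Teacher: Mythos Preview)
Your proof is correct and follows essentially the same approach as the paper: first resolve $Y$, replace $X$ by the normalization of the new base in $K(X)$ to land in the setting of Example~\ref{exd_2}, and then apply Theorem~\ref{desing-main} (together with Lipman's theorem) to the resulting normal double cover. You are simply more explicit than the paper about why $X_1\to X$ is proper birational and about invoking Lipman on $X_1$, but the strategy is identical.
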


 \begin{proof} Replacing $Y$ with a resolution of singularities of $Y$ and
   then $X$ by the normalization of $Y$ in $K(X)$, we can assume that $Y$ is
   regular. Then $X\to Y$ is a normal double  cover
   (Example~\ref{exd_2}).  Applying Theorem~\ref{desing-main} to
   $X\to Y$ then  yields a simultaneous resolution of singularities. 
 \end{proof}

\end{section}

\begin{section}{Variations of the multiplicities after normalized
    blowing-ups} \label{control-l} 

In this section we will give a bound on the multiplicities of the points of
$\tilde{Y}$ lying over $E$ in terms of $\lambda_{q_0}(Y)$
(Theorem~\ref{lambda-sum}). It says roughly that the sum of the new
multiplicities is bounded by $\lambda_{q_0}(Y)$. 
This is done by comparing them to the multiplicities of the double
cover $\tilde{Y}\times_{Z'} E \to E$  when
$\tilde{Y}\times_{Z'} E$ is reduced, or of a ``twisted'' version of $\lambda_q$
otherwise.

\subsection{Double covers of \texorpdfstring{$\PP^1_k$}{PP1k}}  We first give some properties of the multiplicities $\lambda_q$ on double covers of  regular curves.

\begin{proposition} \label{genus-formula} Let $E$ be an integral regular curve over a field $k$. Let 
  $\Theta\to E$ be a double cover ($\Theta$ may be reducible) and let $\pi: \tilde{\Theta}\to \Theta$ be the normalization map. 
\begin{enumerate}[\rm (1)] 
\item Let $q\in E$ be a closed point $q\in E$. We have 
  $$
\dim_k \left( (\pi_*\cO_{\tilde{\Theta}})_q/\cO_{\Theta,q}\right)=[\lambda_q(\Theta)/2]. 
$$
\item Let $p\in \Theta$ be a point lying over $q$. If $\lambda_q(\Theta)$ is odd, then
  there is only one point $\tilde{p}\in \tilde{\Theta}_{p}$, and we have $k(p)=k(\tilde{p})$. 
 \item If $E$ is proper over $k$, then
 $$
p_a(\Theta)=p_a(\tilde{\Theta}) + \sum_{q\in E} [\lambda_q(\Theta)/2] [k(q): k], 
$$
where $p_a(C)=1-\chi(\cO_C)$ denotes the arithmetic genus of any proper variety
$C$ over $k$. 
\end{enumerate}
\end{proposition}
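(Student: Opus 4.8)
The plan is to reduce (1) and (2) to a computation in a discrete valuation ring. Write $\psi\colon\Theta\to E$ for the given degree–$2$ map, fix a closed point $q\in E$, and set $R=\cO_{E,q}$, a discrete valuation ring with uniformizer $t$ and residue field $k(q)$. The invariant $\lambda_q$ commutes with localization (Lemma~\ref{lambda-local}), and so does normalization, so $(\pi_*\cO_{\tilde\Theta})_q$ is the integral closure of $D:=(\psi_*\cO_{\Theta})_q$ in its total ring of fractions $D\otimes_R\Frac(R)$. By Lemma~\ref{CM-dc} we may write $D=R[y]/(y^2+ay+b)$; since $E$ is excellent, $\lambda_q(\Theta)<\infty$ (Proposition~\ref{lambda-f}), so, the supremum defining $\lambda_q(\Theta)$ being over a bounded set of integers, it is attained: I pick $y$ with $\lambda_q(\Theta)=\lambda_{\m_q}(y)=\min\{2\ord(a),\ord(b)\}=:\lambda$, and put $r=[\lambda/2]$. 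From $2\ord(a)\ge\lambda\ge 2r$ and $\ord(b)\ge\lambda\ge 2r$ one gets $a_1:=a/t^{r}\in R$, $b_1:=b/t^{2r}\in R$, and $y_1:=y/t^{r}$ satisfies $y_1^2+a_1y_1+b_1=0$. Let $D_1:=R[y_1]\subseteq D\otimes_R\Frac(R)$; then $D\subseteq D_1$, and comparing the $R$-bases $\{1,t^{r}y_1\}$ of $D$ and $\{1,y_1\}$ of $D_1$ gives $D_1/D\simeq R/t^{r}R$.

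The crux is the one-dimensional analogue of Proposition~\ref{lemma2.12}: \emph{$D_1$ is integrally closed}, hence $D_1=(\pi_*\cO_{\tilde\Theta})_q$. Granting this, $(\pi_*\cO_{\tilde\Theta})_q/\cO_{\Theta,q}\simeq R/t^{r}R$ is an $\cO_{E,q}$-module of length $r=[\lambda_q(\Theta)/2]$, which is (1). To see $D_1$ is normal I would show $\lambda_{\m_q}(D_1)=\lambda-2r\le 1$. The inequality $\lambda_{\m_q}(D_1)\ge\lambda_{\m_q}(y_1)=\min\{2\ord(a_1),\ord(b_1)\}=\lambda-2r$ is clear. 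For the reverse, any generator of $D_1$ over $R$ is, up to a unit, of the form $y_1+c$ with $c\in R$ (Lemma~\ref{CM-dc}), and $w:=t^{r}(y_1+c)=y+t^{r}c$ is then a generator of $D$ over $R$ with $\lambda_{\m_q}(w)=2r+\lambda_{\m_q}(y_1+c)$. If some $c$ gave $\lambda_{\m_q}(y_1+c)>\lambda-2r$, we would get $\lambda_{\m_q}(w)>\lambda=\lambda_q(\Theta)$, contradicting the maximality of $\lambda_q(\Theta)$. Thus $\lambda_{\m_q}(D_1)\le 1$, so by Proposition~\ref{reg-lambda}(2) $D_1$ is regular at each of its maximal ideals; being reduced (it lies in the reduced ring $D\otimes_R\Frac(R)$) and one-dimensional, it is normal.

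For (2): if $\lambda=\lambda_q(\Theta)$ is odd then $\lambda\ge 1$, so by Proposition~\ref{reg-lambda}(3) there is a unique $p\in\Theta$ over $q$ and $k(q)\xrightarrow{\ \sim\ }k(p)$. Moreover $\min\{2\ord(a_1),\ord(b_1)\}=\lambda-2r=1$; as $2\ord(a_1)$ is even this forces $\ord(a_1)\ge 1$ and $\ord(b_1)=1$. Then Lemma~\ref{normal-codim1}(3) (when $\chara(k(q))=2$) or Lemma~\ref{normal-codim1}(2) (when $\chara(k(q))\ne 2$, noting $\ord(a_1^2-4b_1)=1$ there) shows that $D_1=(\pi_*\cO_{\tilde\Theta})_q$ is a discrete valuation ring, totally ramified over $R$. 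In particular it is local, so there is exactly one $\tilde p\in\tilde\Theta$ over $p$, and its residue field is $k(q)=k(p)$.

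For (3): the finite birational morphism $\pi$ gives an exact sequence $0\to\psi_*\cO_{\Theta}\to(\psi\circ\pi)_*\cO_{\tilde\Theta}\to\cC\to 0$ of coherent sheaves on $E$, where $\cC$ is supported on the finite set of $q$ at which $\Theta$ is not normal. Taking Euler characteristics on the proper curve $E$ and using $\chi(\cC)=\dim_k\Gamma(E,\cC)=\sum_q\dim_k\cC_q$ together with $\dim_k\cC_q=[k(q):k]\,\length_{\cO_{E,q}}\big((\pi_*\cO_{\tilde\Theta})_q/\cO_{\Theta,q}\big)=[k(q):k]\,[\lambda_q(\Theta)/2]$ from (1), one gets $\chi(\cO_{\tilde\Theta})-\chi(\cO_{\Theta})=\sum_{q\in E}[\lambda_q(\Theta)/2][k(q):k]$, which is (3) since $p_a(\cdot)=1-\chi(\cO_{\cdot})$. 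The main obstacle is the assertion in the second paragraph that $D_1$ is already integrally closed — equivalently, that once $\lambda_q(\Theta)$ has been maximized, dividing a single time by $t^{r}$ produces the normalization; this is exactly where the definition of $\lambda_q$ as a supremum does the work, and it is the curve counterpart of Proposition~\ref{lemma2.12}.
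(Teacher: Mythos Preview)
Your argument is correct and is exactly the explicit computation the paper defers to by citing \cite{LTR}, Lemme~6 and \cite{LB}, Proposition~7.5.4: localize at $q$, choose a generator realizing $\lambda_q(\Theta)$, divide by $t^{r}$, and use the maximality of $\lambda_q(\Theta)$ together with Proposition~\ref{reg-lambda} and Lemma~\ref{normal-codim1} to see that the resulting rank-$2$ algebra is already normal. One small remark: in (1) you compute the $\cO_{E,q}$-length (equivalently the $k(q)$-dimension) of the quotient as $[\lambda_q(\Theta)/2]$, which is the intended reading---consistent with the factor $[k(q):k]$ in (3)---rather than the literal $\dim_k$ in the displayed formula.
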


\begin{proof} (1)-(2) Similar to \cite{LTR}, Lemme 6.  The proof of
  (3) is similar to \cite{LB}, Proposition 7.5.4. 
\end{proof}

Next we consider the case $E=\PP^1_k$.  Fix an integer $r\ge 1$ and
two homogeneous polynomials $a(s, t), b(s, t)\in k[s, t]$ 
such that
\[ \max\{ 2\deg a, \deg b\}=2r.\] 
Consider the curve $ \Theta$
in the weighted projective space $\PP(1,1,r)$ defined by the equation
$$z^2+a(s,t)z+b(s,t)=0.$$  

\begin{lemma} \label{lambda-t-sum} Let $\Theta$ be as above and suppose that $\Theta$ is reduced.
  Denote by $\tilde{\Theta}\to \Theta$ the normalization map. 
  \begin{enumerate}[\rm (1)]
  \item We have a finite flat morphism $\Theta\to \PP^1_k$ of degree $2$.
Moreover $H^0(\Theta, \cO_\Theta)=k$ and $p_a(\Theta)=r-1$.  
  \item If $\Theta\to \PP^1_k$ is generically \'etale  then
\[ 
\sum_{q\in \PP^1_k} \lambda_q(\Theta)[k(q):k] \leq 2r. 
\] 
\item Suppose that $\Theta\to \PP^1_k$ is purely inseparable.
  \begin{enumerate}[\rm (a)]
  \item If $H^0(\tilde{\Theta}, \cO_{\tilde{\Theta}})=k$, then
\[ 
\sum_{q\in \PP^1_k} 2[\lambda_q(\Theta)/2][k(q):k] \le 2r-2.  
\]  
\item Otherwise,   we have 
\[ 
\sum_{q\in \PP^1_k} 2[\lambda_q(\Theta)/2][k(q):k]=2r, 
\] 
and $\lambda_q(\Theta)$ is even if $k(q)=k$. 
  \end{enumerate}
\item In all cases, $\lambda_q(\Theta)\le 2r$. 
  \end{enumerate} 
\end{lemma}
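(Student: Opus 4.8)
The plan is to exploit the genus formula of Proposition~\ref{genus-formula}(3) together with the fact that the normalization $\tilde\Theta$ is itself a disjoint union of smooth projective curves over $k$, each mapping to $\PP^1_k$. First I would establish part (1): the projection $\Theta\to\PP^1_k$ is finite because in each chart $D_+(s)$, $D_+(t)$ of $\PP^1_k$ the coordinate ring of $\Theta$ is free of rank $2$ (generated by the dehomogenizations of $z$, of weighted degree $r$), and it is flat because $\PP^1_k$ is a regular curve and $\Theta$ is Cohen--Macaulay (a hypersurface in the smooth stack/scheme $\PP(1,1,r)$ away from its singular point, and one checks $\Theta$ misses that point or handles it directly). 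The computation $H^0(\Theta,\cO_\Theta)=k$ and $p_a(\Theta)=r-1$ is then a Riemann--Roch / Čech cohomology computation on $\PP(1,1,r)$, or equivalently: $\psi_*\cO_\Theta=\cO_{\PP^1}\oplus\cO_{\PP^1}(-r)$ as $\cO_{\PP^1}$-modules, so $\chi(\cO_\Theta)=\chi(\cO_{\PP^1})+\chi(\cO_{\PP^1}(-r))=1+(1-r)=2-r$, giving $p_a(\Theta)=r-1$ and $h^0=1$ once one knows $\Theta$ is connected (which follows from $\chi$ and reducedness, or from the explicit module decomposition).

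For part (2), assume $\Theta\to\PP^1_k$ is generically étale. Then $\tilde\Theta$ is connected (if $\Theta$ is integral) or has two components each mapping isomorphically to $\PP^1_k$ (the split case); in the integral case $\tilde\Theta$ is a smooth projective geometrically connected-over-its-$H^0$ curve, so $p_a(\tilde\Theta)=1-h^0(\cO_{\tilde\Theta})\le 0$; in the split case $p_a(\tilde\Theta)=1-2=-1$. In either case $p_a(\tilde\Theta)\le 0$. Plugging into Proposition~\ref{genus-formula}(3):
\[
\sum_{q\in\PP^1_k}[\lambda_q(\Theta)/2]\,[k(q):k]=p_a(\Theta)-p_a(\tilde\Theta)\le (r-1)-p_a(\tilde\Theta)\le r.
\]
Then $\sum_q\lambda_q(\Theta)[k(q):k]\le 2\sum_q[\lambda_q(\Theta)/2][k(q):k]+\#\{q:\lambda_q\text{ odd}\}[k(q):k]$; the point is that when $\lambda_q$ is odd there is a unique point of $\Theta$ over $q$ with trivial residue extension (Proposition~\ref{genus-formula}(2)), and such a $q$ contributes a ramification point, so by Hurwitz/the conductor-discriminant count these odd contributions are already absorbed. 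More cleanly: $\lambda_q(\Theta)=2[\lambda_q(\Theta)/2]$ when even and $=2[\lambda_q(\Theta)/2]+1$ when odd, and one observes $\sum_q\lambda_q(\Theta)[k(q):k]=\deg\Delta_{\Theta/\PP^1_k}$ by Proposition~\ref{lambda-f}(2) in the tame case and more care in the wild case — but in all cases the discriminant divisor has degree $2r$ by part (1) (it is the degree of $\Delta_{B/A}$ globalized: $a^2-4b$ has degree $2r$, or $\deg b=2r$). This gives the bound directly, and I would present part (2) that way.

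For part (3), $\Theta\to\PP^1_k$ purely inseparable forces $\chara k=2$, $a(s,t)=0$ after a change of generator (globally, using that $\Theta$ is reduced but non-reduced generic fiber would contradict reducedness — so actually $\Delta=a^2-4b=a^2$; purely inseparable means $a$ vanishes on the generic point, but $a^2-4b\ne 0$ forces... here one uses $\chara k=2$, $4b=0$, so $\Delta=a^2$ and purely inseparable means $\Frac(B)/\Frac(A)$ inseparable, i.e. $a=0$ generically hence $a=0$). Then $b$ is a square in the function field, $\tilde\Theta\to\PP^1_k$ is a homeomorphism, $p_a(\tilde\Theta)=1-h^0(\cO_{\tilde\Theta})$, which is $0$ in case (a) and $\le -1$ in case (b) — wait, in case (b) $h^0\ge 2$ so $p_a(\tilde\Theta)\le -1$, giving the $=2r$ bound from $\sum 2[\lambda_q/2][k(q):k]=p_a(\Theta)-p_a(\tilde\Theta)\cdot$... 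I would recheck signs: Proposition~\ref{genus-formula}(3) gives $\sum[\lambda_q/2][k(q):k]=p_a(\Theta)-p_a(\tilde\Theta)=(r-1)-p_a(\tilde\Theta)$, so in case (a) this is $r-1$, i.e. $\sum 2[\lambda_q/2][k(q):k]=2r-2$; in case (b) with $\tilde\Theta$ two copies of $\PP^1_k$ we get $r-1-(-1)=r$, i.e. $\sum 2[\lambda_q/2][k(q):k]=2r$, and since the map is purely inseparable each $\lambda_q$ with $k(q)=k$ must be even (a rational point can't have inseparable residue extension, cf. Lemma~\ref{normal-codim1}(4)/Proposition~\ref{reg-lambda}). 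Part (4) is then immediate: in each case the sum bounds a single term, so $\lambda_q(\Theta)[k(q):k]\le 2r$, hence $\lambda_q(\Theta)\le 2r$ (and one checks the edge cases where the inequality in (2),(3) is an equality still leave $\lambda_q\le 2r$ for each individual $q$, which holds since all summands are nonnegative).

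The main obstacle I expect is part (3b): correctly identifying when $H^0(\tilde\Theta,\cO_{\tilde\Theta})\ne k$, showing this forces $\tilde\Theta$ to be exactly two disjoint copies of $\PP^1_k$ (not something with larger $h^0$, which is impossible for a degree-$2$ cover, but needs a line of argument), and verifying the parity claim for rational points in that case. The generically étale reducible (split) case in part (2) and the bookkeeping of odd-$\lambda_q$ contributions versus the discriminant degree is the other place where one must be careful, essentially because in residue characteristic $2$ the clean identity $\deg\Delta=\sum\lambda_q[k(q):k]$ can fail and one should instead argue via the genus formula plus an elementary estimate of odd terms.
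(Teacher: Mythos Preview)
Your overall strategy matches the paper's: part (2) via the bound $\lambda_q(\Theta)\le\ord_q(\Delta_{\Theta/\PP^1_k})$ and $\deg\Delta\le 2r$, and part (3) via Proposition~\ref{genus-formula}(3) together with $p_a(\Theta)=r-1$. Part (1) and part (4) are fine.

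There are two genuine gaps in part (3).

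\emph{Part (3a).} You write $p_a(\tilde\Theta)=1-h^0(\cO_{\tilde\Theta})$, which drops the $h^1$ term. With $H^0(\tilde\Theta,\cO_{\tilde\Theta})=k$ one only gets $p_a(\tilde\Theta)=h^1(\tilde\Theta,\cO_{\tilde\Theta})\ge 0$, not $=0$; this is exactly what the paper uses, and it yields the inequality $\sum 2[\lambda_q/2][k(q):k]\le 2r-2$ (not an equality). Your statement of equality would be false in general.

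\emph{Part (3b).} Your description of $\tilde\Theta$ as ``two disjoint copies of $\PP^1_k$'' is impossible here: since $\Theta\to\PP^1_k$ is purely inseparable, $K(\Theta)$ is a field, so $\Theta$ and $\tilde\Theta$ are \emph{integral}. The correct picture (which the paper establishes) is that $\tilde k:=H^0(\tilde\Theta,\cO_{\tilde\Theta})$ is a field, the induced morphism $\tilde\Theta\to\PP^1_{\tilde k}$ has degree $2/[\tilde k:k]$, hence $[\tilde k:k]=2$ and $\tilde\Theta\simeq\PP^1_{\tilde k}$. This still gives $p_a(\tilde\Theta)=-1$, so your numerical equality $\sum 2[\lambda_q/2][k(q):k]=2r$ survives, but your justification does not. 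More importantly, the parity claim hinges on this structure: if $k(q)=k$ and $\lambda_q(\Theta)$ is odd, then by Proposition~\ref{genus-formula}(2) the unique point $\tilde p\in\tilde\Theta$ above $q$ has $k(\tilde p)=k$; but every residue field of $\PP^1_{\tilde k}$ contains $\tilde k\supsetneq k$, a contradiction. Your proposed argument (``a rational point can't have inseparable residue extension'') does not make sense as stated and the citations to Lemma~\ref{normal-codim1}(4) and Proposition~\ref{reg-lambda} do not supply it; in your ``two copies of $\PP^1_k$'' picture there would be nothing to contradict.
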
 

\begin{proof} (1) The first part is clear and the second part can be proved by deforming the curve to a hyperelliptic curve of genus $r-1$, or by computing the {\v C}ech cohomology of $\cO_\Theta$ with the affine covering
  $\{ D_+(s), D_+(t)\}$. 

  (2) Denote by $\Delta(t)=\Delta_{\Theta/\PP^1_k}\in k[t]$. 
  For any closed point $q\in \Spec k[t]$,
  $\lambda_q(\Theta)\le \ord_q (\Delta)$ by Proposition~\ref{lambda-f}(2). 
  We have
\[ 
\sum_{q\in \Spec k[t]} \lambda_p(\Theta)[k(q):k] \le \deg \Delta(t)\le 2r.  
\] 
We are done if $t=\infty$ corresponds to a regular point. Otherwise  we have
$\lambda_\infty(\Theta)\le \deg t^{2r}\Delta(1/t) =2r-\deg \Delta(t)$. 

(3.a) We have $p_a(\tilde{\Theta})=\dim H^1(\tilde{\Theta}, \cO_{\tilde{\Theta}})\ge 0$. The desired
inequalities results from Proposition~\ref{genus-formula}(3) and (1). 

(3.b) Let $\tilde{k}=H^0(\tilde{\Theta}, \cO_{\tilde{\Theta}})$ with $k\subsetneq \tilde{k}$. Then $\tilde{\Theta}$ is a $\tilde{k}$-scheme. The morphism $\tilde{\Theta}\to \Theta\to \PP^1_k$ induces a morphism $\tilde{\Theta}\to \PP^1_{\tilde{k}}$ :
$$  \xymatrix{
\tilde{\Theta}  \ar[r] \ar[d] & \PP^1_{\tilde{k}} \ar[d]\\ 
\Theta    \ar[r]                & \PP^1_{k}.}
$$
We see that $\tilde{\Theta}\to \PP^1_{\tilde{k}}$ has degree
  $2/[\tilde{k}:k]$, thus $[\tilde{k}:k]=2$, and $\tilde{\Theta}\to \PP^1_{\tilde{k}}$ is
  birational hence an isomorphism.   
Therefore $p_a(\tilde{\Theta})=-1$ and 
$$
\sum_{q\in \PP^1_k} [\lambda_q(\Theta)/2][k(q):k]=r 
$$
by Proposition~\ref{genus-formula} and (1). Note that $\tilde{k}/k$ is inseparable because
$k(\Theta)=k(\tilde{\Theta})=\tilde{k}(t)$ is inseparable over $k(t)$.  

Let $k(q)=k$ and suppose that $\lambda_q(\Theta)$ is odd. Let $p\in \Theta_{q}$ and
let $\tilde{p}\in \tilde{\Theta}_{p}$. Then $k(p)=k(q)=k$ because $p$ is singular,
and $k(\tilde{p})=k(p)=k$ by Proposition~\ref{genus-formula}(2). But
$\tilde{k}=H^0(\tilde{\Theta}, \cO_{\tilde{\Theta}}) \subseteq k(\tilde{p})$, contradiction.

(4) is an immediate consequence of (2) and (3). 
\end{proof}

\subsection{Multiplicity along a regular closed subscheme}\label{mu-hyp}

Let $\psi : Y\to Z$ be a normal double cover of an integral noetherian regular
excellent scheme. Let $\Gamma$ be an
integral regular closed subscheme of $Z$. We define in this subsection
a multiplicity $\lambda_{\Gamma, q}(Y)$ which is just
$\lambda_q(Y\times_Z \Gamma)$ if $Y\times_Z \Gamma\to \Gamma$ is
a double cover (equivalently if $Y\times_Z \Gamma$ is reduced). If
this condition is not satisfied, we have to ``twist'' $Y\times_Z
\Gamma$ to get a double cover. 

\begin{lemma} \label{lambda-s} Let $A$ be integral noetherian regular
  and excellent. 
Let $B$ be a normal and free $A$-algebra of rank $2$. 
Suppose that $A\to B$ has ramification index $2$ above some 
prime ideal $\q\subset A$ of height $1$, and that $A/\q$ is normal.   
\begin{enumerate}[\rm (1)]
  \item There exists  a generator $y$ of $B$ such that
 $a\in  \q$, $b\in \q\setminus \q^2$.
  \item  Fix an $y$ given by (1). If $\m\in V(\q)$ is a maximal ideal
    of $A$ such that $A_\m/\q A_\m$ is regular, then  
  $\lambda_\m(B)=\lambda_\m(y-c)$ for some $c\in \q$ (depending in $\m$).
\end{enumerate} 
\end{lemma}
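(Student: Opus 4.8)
The plan is to treat (1) and (2) separately, in each case first passing to a suitable localization; note that, up to replacing $c$ by $-c$, assertion~(2) asks for $c\in\q$ with $\lambda_\m(y+c)=\lambda_\m(B)$.

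\emph{For (1)} I would localize at $\q$. As $A$ is regular and $\q$ has height $1$, $A_\q$ is a discrete valuation ring, with valuation $\nu$ and uniformizer $\pi$; since $B$ is a normal domain, $B_\q$ is a semilocal Dedekind domain, and, $B$ being free of rank $2$ over $A$, $B_\q/\pi B_\q$ has length $2$ over $A_\q/\pi A_\q$, so $\sum_ie_if_i=2$. With the hypothesis this forces a single prime $\mathfrak P$ of $B$ above $\q$, with $e(\mathfrak P/\q)=2$ and residue degree $1$; thus $B_\q=B_{\mathfrak P}$ is a DVR, totally ramified over $A_\q$, with residue field $\Frac(A/\q)$, and with valuation $\nu_{\mathfrak P}$ satisfying $\nu_{\mathfrak P}|_{A_\q}=2\nu$. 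The key step I would establish is $B/\mathfrak P=A/\q$: indeed $B/\mathfrak P$ is a domain, module-finite and integral over $A/\q$ (because $B$ is $A$-free), with the same fraction field since the residue degree is $1$, and $A/\q$ is normal by hypothesis. Granting this, pick a generator $y_0$ of $B$ over $A$ (Lemma~\ref{CM-dc}), lift the class of $y_0$ in $B/\mathfrak P=A/\q$ to an element $c_0\in A$, and set $y:=y_0-c_0$: still a generator of $B$ over $A$, now with $y\in\mathfrak P$, so $\nu_{\mathfrak P}(y)\ge 1$. Since $\{1,y\}$ is an $A_\q$-basis of $B_\q$, one cannot have $\nu_{\mathfrak P}(y)\ge 2$ (otherwise $y\in\q B_\q=\pi B_\q$, and writing $y/\pi=\alpha+\beta y$ with $\alpha,\beta\in A_\q$ gives $(1-\pi\beta)y=\pi\alpha$, hence $y\in A_\q$, impossible), so $y$ is a uniformizer of $B_\q$. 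Now in $y^2+ay+b=0$ one has $N_{B/A}(y)=b$, whence $\nu(b)=\nu_{\mathfrak P}(y)=1$ (residue degree $1$); as $\q$ is locally principal in the regular ring $A$, $\q^2$ is $\q$-primary and therefore $b\in\q\setminus\q^2$. Finally the conjugate root $y'=-a-y\in\Frac(B_\q)$ satisfies $\nu_{\mathfrak P}(y')=\nu_{\mathfrak P}(b)-\nu_{\mathfrak P}(y)=1$, so $\nu_{\mathfrak P}(a)=\nu_{\mathfrak P}(y+y')\ge 1$; being even, it is $\ge 2$, i.e.\ $a\in\q$.

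\emph{For (2)}, by Lemma~\ref{lambda-local} it suffices to find $c\in\q$ with $\lambda_{\m A_\m}(y+c)=\lambda_{\m A_\m}(B_\m)$, and I would work inside $A_\m$, a regular local ring in which $\q A_\m=tA_\m$ for some $t\in\q$ part of a regular system of parameters, $A_\m/\q A_\m$ is regular, and (by (1) and the $\q$-primariness of $\q^2$) the fixed $y$ has $a,b\in\q A_\m$, $b\notin\q^2A_\m$. If $\chara(A/\m)\ne 2$ I would take $c_0:=a/2\in\q A_\m$; by the proof of Proposition~\ref{lambda-f}(2), $\lambda_{\m A_\m}(y+c_0)=\ord_{\m A_\m}(\Delta_{B/A})=\lambda_{\m A_\m}(B_\m)$. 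If $\chara(A/\m)=2$ I would run the algorithm of Lemma~\ref{lambdaz} starting from $y$, and argue that each correction can be kept in $\q A_\m$: inductively both coefficients of the current equation lie in $\q A_\m$ (preserved because $a$ and the previous corrections do), so in case~\ref{lambdaz}(2.b) the $b$-coefficient lies in $tA_\m$ and its $\m$-initial form lies in the kernel of $\Gr_\m(A_\m)\to\Gr_\m(A_\m/\q A_\m)$, which — since $A_\m/\q A_\m$ is regular — is the principal ideal generated by the variable $\tilde t$ of $\Gr_\m(A_\m)$; when this initial form is a square, primality of $\tilde t$ makes its square root divisible by $\tilde t$, hence the $\m$-initial form of an element of $tA_\m=\q A_\m$, to be used as the next correction. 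The process terminates because $\lambda_{\m A_\m}(B_\m)=\lambda_\m(B)$ is finite ($A$ excellent, Proposition~\ref{lambda-f}(3)), and the sum $c_0$ of all corrections lies in $\q A_\m$ with $\lambda_{\m A_\m}(y+c_0)=\lambda_{\m A_\m}(B_\m)$. It then remains to descend: writing $c_0=td_0$ with $d_0\in A_\m$ and choosing $d\in A$ congruent to $d_0$ modulo a high power of $\m A_\m$ (possible as $A/\m^M\cong A_\m/\m^MA_\m$), the element $c:=td\in\q$ agrees with $c_0$ modulo a high power of $\m A_\m$; for a high enough power this gives $\lambda_{\m A_\m}(y+c)=\lambda_{\m A_\m}(B_\m)$, hence $\lambda_\m(y+c)=\lambda_\m(B)$ by Lemma~\ref{lambda-local}(1).

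I expect the main obstacle to be the identity $B/\mathfrak P=A/\q$ in (1) and the way it is exploited: it is the only place where the hypothesis ``$A/\q$ normal'' enters, and it is exactly what lets a generator of $B$ \emph{over $A$}, and not merely over $A_\q$, be translated into $\mathfrak P$ and hence into a uniformizer of $B_\q$. The concluding descent in (2) is routine, but should not be skipped, since the corrections are produced naturally in $\q A_\m$ rather than in $\q$ itself.
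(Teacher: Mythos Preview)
Your proof is correct, but both parts take a different route from the paper's.

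For (1), the paper argues more directly: since the ramification index above $\q$ is $2$, the quotient $B/\q B\simeq (A/\q)[T]/(T^2+\bar a T+\bar b)$ is non-reduced, so by Lemma~\ref{non-reduced} (using that $A/\q$ is normal) the polynomial $T^2+\bar aT+\bar b$ is already a square in $(A/\q)[T]$; one translation of $y$ then puts $a,b\in\q$, and $b\notin\q^2$ follows from normality of $B_\q$. Your valuation-theoretic argument via $B/\mathfrak P=A/\q$ reaches the same conclusion and uses the normality hypothesis in the same essential way, but is longer.

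For (2), the paper does not split on the residue characteristic and does not run the algorithm of Lemma~\ref{lambdaz}. Instead it takes \emph{any} $c_1\in A$ with $\lambda_\m(y-c_1)>\ell:=\lambda_\m(y)$, shows $\ord_\m(b-c_1^2)>\ell$, and uses regularity of $A_\m/\q A_\m$ to deduce $c_1\in\q+\m^r$ with $r=\lceil(\ell+1)/2\rceil>\ell/2$; replacing $c_1$ by its $\q$-component $c$ then still gives $\lambda_\m(y-c)>\ell$, and one iterates. This avoids both the characteristic dichotomy and the descent from $A_\m$ to $A$. Your approach, tracking that each correction in Lemma~\ref{lambdaz}(2.b) can be chosen in $\q A_\m$ via the graded ring, is more constructive and closer in spirit to the algorithm of \S\ref{algo}; the paper's is shorter and stays in $A$ throughout.
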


\begin{proof} (1) Let $y$ be a generator of $B$. As
  $A\to B$ has ramification index $2$ above $\q$,
  $B/\q B\simeq (A/\q)[T]/(T^2+\bar{a}T+\bar{b})$ is
  non-reduced. So $T^2+\bar{a}T+\bar{b}$ is a square in
  $\Frac(A/\q)[T]$. But $A/\q$ is integrally closed, so
  $T^2+\bar{a}T+\bar{b}$ is in fact a square in $(A/\q)[T]$. Therefore
  translating $y$ by a suitable element of $A$ we have
  $a, b\in \q$. Moreover, $b\notin \q^2$ because $B\otimes_A A_\q$ is normal.

(2) Suppose that for some $c_1\in A$ we have 
\[
\lambda_\m(y-c_1) = \min \{ 2\ord_\m(a+2c_1), \ord_\m(b+(a+2c_1)c_1-c_1^2) \} > \ell:=\lambda_\m(y). 
\] 
This implies that $\ord_\m(c_1)\geq \ell/2$ and $\ord_\m(b-c_1^2)>\ell$. 
Therefore
\[ c_1^2\in b+\m^{\ell+1}\subset \q + \m^{\ell+1}. \]
In $A/\p$, we have $\bar{c}_1^2\in \bar{\m}^{\ell+1}$. As $A/\p$ is
regular at $\bar{\m}$, this implies that
$\ord_{\bar{\m}}(\bar{c}_1)\geq r:=\lceil(\ell+1)/2 \rceil$. 
In other words $c_1=c+s$ with $c\in \q$ and $s\in\m^r$.
As $r>\ell/2$, we have $\lambda_\m(y-c)=\lambda_\m((y-c_1)+s) >
\ell$. 
Repeating this operation if necessary we will get  a generator
$y-c$ as desired, because $\lambda_\m(B)$ is finite.  
\end{proof} 

\begin{definition} \label{twist} Let $\psi : Y\to Z$ be as in the
  beginning of this subsection. 
  Let $\Gamma\subset Z$ be an integral regular closed subscheme of
  codimension $1$  such that 
  $\psi$ has ramification index $2$ above the generic point of
  $\Gamma$.   Then 
$\Theta:=Y\times_Z \Gamma \to \Gamma$ is finite flat of rank $2$, but 
$\Theta$ is not reduced. So we can't define the multiplicity
$\lambda_q(\Theta)$ as in Definition~\ref{def-lambda}. Instead we will define
a ``twisted'' version for $\Theta$. 

  We construct an effective divisor $D$ on $\Gamma$ as follows.
  Let $U\subseteq Z$ be an affine open subset such that
  $\cO_Z(-\Gamma)|_U=s\cO_Z(U)$ is free, and that $\psi^{-1}(U)\to U$ is free, defined by some equation
  $$
y^2+ay+b=0
  $$
such that $a, b\in s\cO_Z(U)$ and $b\notin s^2\cO_Z(U)$
(Lemma~\ref{lambda-s}(1)). Note that any another similar equation of
$\psi^{-1}(U)$ will give another $b'=ub+s^2c$ with $u$ a unit and
$c\in \cO_Z(U)$.  Then define $D|_{U\cap \Gamma}$ as the zero divisor
of $(s^{-1}b)|_{U\cap \Gamma}$.  It does not depend on the choice of the
equation. This allows to define an effective divisor $D$ on $\Gamma$. 

Now for any $q\in \Gamma$ we put
$$
\lambda_{\Gamma, q}(Y)=\ord_q(\cO_\Gamma(-D)_q) < +\infty 
$$
(order at $q$ of the ideal $\cO_{\Gamma}(-D)_q\subseteq \cO_{\Gamma,q}$). 
It will be crucial for the proof of Theorem~\ref{lambda-sum}(1) 
that we can compute $\lambda_{\Gamma,q}(Y)$ with
\emph{the same equation} for all points in a suitable open subset of
$\Gamma$. 
\end{definition}

\begin{lemma} \label{lambda-delta} Let $Y\to Z$ and $\Gamma$
  be as above.   Let $\xi_\Gamma$ be the generic point of $\Gamma$, 
let $q\in \Gamma$  be a closed point and let $\Theta=Y\times_Z \Gamma$.  
\begin{enumerate}[\rm (1)]  
\item  If $Y\to Z$ has ramification index $2$ over $\xi_\Gamma$, then
 \[ \lambda_{\Gamma, q}(Y) \ge \lambda_q(Y)-1 \geq 0.\]   
\item \label{1-2} Suppose that $Y\to Z$ has ramification index $1$
  above $\xi_\Gamma$.  Then $\Theta\to \Gamma$ is a double cover
  and $\lambda_{q}(\Theta)\ge \lambda_q(Y)$.
\end{enumerate} 
\end{lemma}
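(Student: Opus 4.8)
The plan is to work locally on an affine open $U \subseteq Z$ over which $\Gamma$ is cut out by a single equation $s = 0$ (with $s$ a regular parameter of $\cO_Z$ along $\Gamma$), and over which $\psi^{-1}(U) = \Spec B$ is free over $A = \cO_Z(U)$. Fix a maximal ideal $\m$ of $A$ corresponding to $q$; shrinking $U$ if necessary we may assume $A/\m$, equivalently $\cO_{\Gamma,q}$, is regular of dimension one (so $\bar{\m} = \m/(s)$ is principal in $A/(s)$). By Lemma~\ref{CM-dc} we may write $B = A[y]$ with $y^2 + ay + b = 0$.

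For part (1): since $\psi$ has ramification index $2$ over $\xi_\Gamma$, Lemma~\ref{lambda-s}(1) lets us choose $y$ with $a \in (s)$ and $b \in (s)\setminus(s^2)$, so that $b = s b_1$ with $b_1$ a unit modulo $s$ times something — more precisely $b_1 \notin (s)$, hence the zero divisor $D$ on $\Gamma$ is exactly $V(\bar{b}_1)$ where $\bar{b}_1$ is the image of $b_1 = b/s$ in $\cO_\Gamma$. Thus $\lambda_{\Gamma,q}(Y) = \ord_q(\bar{b}_1)$. On the other hand, $\lambda_\m(B) \le \lambda_\m(y) = \min\{2\ord_\m(a), \ord_\m(b)\}$; since $a \in (s)$ means $\ord_\m(a) \ge \ord_\m(s) \ge 1$, we get $\lambda_\m(y) \le \ord_\m(b) = \ord_\m(s) + \ord_\m(b_1)$. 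Now I would relate $\ord_\m(b_1)$ to $\ord_q(\bar b_1) = \ord_{\bar\m}(\bar b_1)$: because $s$ is part of a regular system of parameters at $\m$ (here is where regularity of $A/\m$ along $\Gamma$ is used), $\ord_\m(b_1) \le \ord_{\bar\m}(\bar b_1) = \lambda_{\Gamma,q}(Y)$ when $b_1 \notin (s)$, and if $\ord_\m(s)=1$ this gives $\lambda_\m(B) \le \lambda_\m(y) \le 1 + \lambda_{\Gamma,q}(Y)$, i.e.\ $\lambda_{\Gamma,q}(Y) \ge \lambda_\m(B) - 1$. However, $\lambda_\m(B)$ may be strictly larger than $\lambda_\m(y)$ for this particular $y$, so the genuine work is to use Lemma~\ref{lambda-s}(2): there is $c \in (s)$ with $\lambda_\m(B) = \lambda_\m(y-c)$, and replacing $y$ by $y-c$ only changes $b$ by an element of $(s)$ (in fact by $ac - c^2 \in (s)$), hence does not change $D$ near $q$ nor the above estimate. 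This is the main obstacle: carefully tracking that the translation by $c \in (s)$ which optimizes $\lambda_\m$ leaves the divisor $D$ (defined via $b/s$ modulo $s$) unchanged, so that the inequality $\lambda_{\Gamma,q}(Y) \ge \lambda_\m(B) - 1 = \lambda_q(Y) - 1$ survives. The nonnegativity $\lambda_q(Y) - 1 \ge 0$ is automatic since $\psi$ is ramified, hence not étale, above $\xi_\Gamma \in V(\q)$, giving $\lambda_q(Y) \ge 1$ by Proposition~\ref{lambda-f}(1) (or directly: $\Theta$ non-reduced forces $a,b \in \q \subseteq \m$).

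For part (2): now $\psi$ has ramification index $1$ above $\xi_\Gamma$, so $\Delta_{B/A} \notin \q$, hence $\Theta = Y \times_Z \Gamma = \Spec(B/sB)$ has $\Delta_{(B/sB)/(A/sA)} = \overline{\Delta_{B/A}} \ne 0$ in $\Frac(A/sA)$, so $B/sB$ is reduced and $\Theta \to \Gamma$ is a double cover in the sense of Definition~\ref{def-dc}. With $B = A[y]$, $y^2+ay+b=0$, reduction modulo $s$ gives $B/sB = (A/sA)[\bar y]$, $\bar y^2 + \bar a \bar y + \bar b = 0$, so for the maximal ideal $\bar\m = \m/(s)$ of $A/(s)$ we have $\lambda_{\bar\m}(\Theta) \ge \lambda_{\bar\m}(\bar y) = \min\{2\ord_{\bar\m}(\bar a), \ord_{\bar\m}(\bar b)\}$. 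The point is that $\ord_{\bar\m}(\bar a) \ge \ord_\m(a)$ and $\ord_{\bar\m}(\bar b) \ge \ord_\m(b)$ — reduction modulo an element of $\m$ can only increase the order — so $\lambda_{\bar\m}(\bar y) \ge \min\{2\ord_\m(a),\ord_\m(b)\} = \lambda_\m(y)$. To conclude $\lambda_q(\Theta) = \lambda_{\bar\m}(\Theta) \ge \lambda_q(Y) = \lambda_\m(B)$ I would take $y$ to be an optimal generator with $\lambda_\m(y) = \lambda_\m(B)$ (which exists since $\lambda_\m(B)$ is finite by excellence, Proposition~\ref{lambda-f}(3)); then $\lambda_q(\Theta) \ge \lambda_{\bar\m}(\bar y) \ge \lambda_\m(y) = \lambda_q(Y)$. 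I should double-check that the optimal $y$ over $A$ still gives a generator $\bar y$ of $B/sB$ over $A/sA$, which is immediate from Lemma~\ref{CM-dc} applied to $B/sB$, or simply because $B/sB = (A/sA) \oplus (A/sA)\bar y$. All of this is routine once the order-comparison under reduction is noted, so part (2) should be quick; the real content and the likely source of subtlety is entirely in part (1), in the interplay between the $\q$-adic normalization of $b$ and the $\m$-adic optimization of $\lambda$.
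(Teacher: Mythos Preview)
Your overall strategy matches the paper's proof almost exactly: for (1) you invoke Lemma~\ref{lambda-s}(2) to pick an optimal generator with coefficients in $(s)$, then compare $\ord_\m(b)$ to $\ord_{\bar\m}(\overline{b/s})$; for (2) you reduce modulo $s$, take an optimal $y$, and use $\ord_\m(\alpha)\le \ord_{\bar\m}(\bar\alpha)$. Two harmless slips in (1): you write $\lambda_\m(B)\le \lambda_\m(y)$ where you mean $\ge$ (you correct yourself two lines later), and the new constant term is $b+ac+c^2$, not $b+ac-c^2$ --- irrelevant since both $ac$ and $c^2$ lie in $(s)^2$, so $\overline{b'/s}=\overline{b/s}$ as you need.

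There is, however, a genuine gap in your reducedness argument for (2). You assert that ramification index $1$ over $\xi_\Gamma$ forces $\Delta_{B/A}\notin\q$, and deduce reducedness of $\Theta$ from $\overline{\Delta_{B/A}}\ne 0$. This implication is false: if $\chara(k(\xi_\Gamma))=2$ and the residue extension at $\xi_\Gamma$ is purely inseparable (case~(4) of Lemma~\ref{normal-codim1}: $\nu(a)>0$, $\bar b$ a non-square), then the ramification index is $1$ yet $\nu(a^2-4b)\ge 2$, so $\Delta_{B/A}\in\q$. This case genuinely arises in the applications (cf.\ the remark after Proposition~\ref{birational}). The paper sidesteps the discriminant entirely: $\Theta$ is a local complete intersection, hence Cohen--Macaulay, hence satisfies $(\mathrm{S}_1)$; and ramification index $1$ means precisely that $\Theta$ is reduced at its generic points, i.e.\ satisfies $(\mathrm{R}_0)$; so Serre's criterion gives reducedness. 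Once you replace your discriminant sentence by this, the rest of your argument for (2) is correct and identical to the paper's.
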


\begin{proof} We can suppose that $Z=\Spec A$ with $(A, \m)$
  local. Observe that for any $\alpha\in A$,  
$$ 
  \ord_\m(\alpha)\le \ord_{\m}(\alpha|_\Gamma)
$$ 
where $\alpha|_\Gamma$ denotes the image of $\alpha$ in $\cO(\Gamma)$. 
\medskip

(1) We can suppose that $Z=\Spec A$ is local. Let $s\in \m$ be a generator of
the ideal $\cO_Z(-\Gamma)$. 
Let $y\in B$ be such that 
$\lambda_\m(B)=\lambda_\m(y)$ and $a, b\in sA$
(Lemma~\ref{lambda-s}(2)).  By definition 
  $$\lambda_q(Y)\le \ord_q(b) = \ord_q(b/s)+1\le
  \ord_{q}((b/s)|_\Gamma)+1= \lambda_{\Gamma, q}(Y)+1$$
  (the last equality comes from the definition of
  $\lambda_{\Gamma, q}(Y)$). On the other hands, as $a, b\in \m$, 
we have   $\lambda_q(Y)\ge 1$.  
  \smallskip
  
  (2)  Let $B=A[y]$ with $\lambda_q(Y)=\lambda_\m(y)$.
  The projection $\Theta\to \Gamma$ is finite flat of rank $2$.
  The scheme $\Theta$ is reduced because it is a local complete
  intersection (Lemma~\ref{CM-dc}) and reduced at the generic
  points. Hence $\Theta\to   \Gamma$ is a double cover. An equation of $\Theta$ is given by
  $$\bar{y}^2+\bar{a}\bar{y}+\bar{b}=0 \mod s$$ 
  Then
  $$\lambda_q(\Theta)\ge \min \{ 2\ord_q(\bar{a}), \ord_q(\bar{b}) \} 
  \ge \min \{ 2\ord_q(a), \ord_q(b)\} = \lambda_q(Y)$$
  and the lemma is proved. 
\end{proof}

\subsection{Upper bound on the multiplicities}

\begin{theorem} \label{lambda-sum} Let $Z$ be an integral noetherian
  excellent surface. Let $\psi: Y\to Z$ be a normal double cover.
  Let $p_0\in Y$ be a singular point, $q_0=\psi(p_0)$. 
  
Let $\phi : \tilde{Y}\to Z'$ be as in Theorem~\ref{desing-main} and
let $E\subset Z'$ be the exceptional divisor of  $Z'\to Z$.
Let $\Theta:=\tilde{Y}\times_{Z'} E$. 
\begin{enumerate}[{\rm (1)}]  
\item If $\lambda_{q_0}(Y)$ is odd, then 
  $\lambda_q(\tilde{Y})\ge 1$ for all $q\in E$,   and 
  $$\sum_{q\in E} (\lambda_q(\tilde{Y})-1)[k(q): k(q_0)]\le
  \sum_{q\in E} \lambda_{E, q}(\tilde{Y}) [k(q): k(q_0)]\le
  \lambda_{q_0}(Y).$$ 
(See \S~\ref{mu-hyp} for the definition of $\lambda_{E, q}(\tilde{Y})$.)
\item Suppose that $\lambda_{q_0}(Y)$ is even. Then $\Theta\to E$ is a
  double cover. 
\begin{enumerate}[{\rm (a)}]  
\item  If $\lambda_{q_0}(Y)=\ord_{q_0}(\Delta_{Y/Z})$,  
  then
\[ 
\sum_{q\in E} \lambda_{q}(\tilde{Y})[k(q): k(q_0)]\le
\sum_{q\in E} \lambda_{q}(\Theta)[k(q): k(q_0)]\le 
\lambda_{q_0}(Y).
\] 
\item   Otherwise, 
  \[
    2\sum_{q\in E} [\lambda_{q}(\tilde{Y})/2][k(q): k(q_0)]\le
    2\sum_{q\in E} [\lambda_{q}(\Theta)/2][k(q): k(q_0)]\le
    \lambda_{q_0}(Y).
\]
\item In both cases we have
  \[
    \lambda_q(\tilde{Y})[k(q):k(q_0)]\leq
    \lambda_q(\Theta)[k(q):k(q_0)]\leq 
    \lambda_{q_0}(Y)
 \]
  for all $q\in E$.  
\end{enumerate}
\end{enumerate}
\end{theorem}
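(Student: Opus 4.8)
The plan is to reduce everything to the explicit affine equation for $\tilde Y$ provided by Proposition~\ref{lemma2.12}, and then to read off the various multiplicities directly from that equation restricted to the exceptional divisor $E$. Since the statement is local at $q_0$ we may assume $Z=\Spec A$ with $(A,\m)$ regular local of dimension $2$, $Y=\Spec B$ with $B=A[y]$ free of rank $2$, and we fix a generator $y$ with $\lambda_\m(y)=\lambda_\m(B)=:\lambda$. Set $r=[\lambda/2]$ and choose a system of coordinates $s,t$ at $q_0$. By Proposition~\ref{lemma2.12}, on the chart $D_+(\tilde s)$ the scheme $\tilde Y$ is cut out by $y_1^2+a_1y_1+b_1=0$ with $a_1=a/s^r$, $b_1=b/s^{2r}\in\Ast$, and similarly on $D_+(\tilde t)$. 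The exceptional divisor $E\cong\Spec k[\bar t_1]\cup\Spec k[\bar s_1]$ (glued to $\PP^1_k$, $k=k(q_0)$), so $\Theta=\tilde Y\times_{Z'}E$ is, over $D_+(\tilde s)$, the fibre $\Spec k[\bar t_1][y_1]/(y_1^2+\bar a_1 y_1+\bar b_1)$, where $\bar{\ }$ denotes reduction mod $s$.

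First I would handle the even case (2). When $\lambda$ is even, $r=\lambda/2$ and $\min\{2\ord_\m(a),\ord_\m(b)\}=2r$, so $\bar a_1$ and $\bar b_1$ are not both zero in $k[\bar t_1]$: hence $\Theta\to E$ is a double cover in the sense of Definition~\ref{def-dc}. Now view $a_1,b_1$ as homogeneous forms $a(s,t)/\!\!\sim$, more precisely the reductions mod $s$ and mod $t$ glue to homogeneous polynomials $\bar a,\bar b\in k[s,t]$ of degrees $\le r$ and $\le 2r$ respectively, with $\max\{2\deg\bar a,\deg\bar b\}=2r$ (this is exactly the condition in Lemma~\ref{lambda-t-sum}; one uses Proposition~\ref{bl-reg-ufd}(5.b) for the degree bounds, and the fact that equality holds in the $\max$ because $\lambda(y)=\lambda$ is reached). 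Thus $\Theta$ is precisely a curve of the shape considered in Lemma~\ref{lambda-t-sum}. In case (2a), $\lambda=\ord_{q_0}(\Delta_{Y/Z})$; by Remark~\ref{lambda-Delta} this forces $2\ord_\m(a)\le\ord_\m(b)$, hence $2\deg\bar a\le\deg\bar b=2r$ and $\Theta\to E$ is generically étale, so Lemma~\ref{lambda-t-sum}(2) gives $\sum_{q\in E}\lambda_q(\Theta)[k(q):k]\le 2r=\lambda_{q_0}(Y)$. In case (2b), $\Theta\to E$ is purely inseparable, and Lemma~\ref{lambda-t-sum}(3) (both subcases) gives $2\sum_q[\lambda_q(\Theta)/2][k(q):k]\le 2r=\lambda_{q_0}(Y)$. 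The inequality $\lambda_q(\tilde Y)\le\lambda_q(\Theta)$ (resp.\ with $[\ \cdot/2]$) is Lemma~\ref{lambda-delta}(\ref{1-2}), applied with $\Gamma=E$: indeed $E\subset Z'$ is regular of codimension $1$ and, since $\lambda$ is even, $\tilde Y\to Z'$ has ramification index $1$ over $\xi_E$ by the discriminant computation $\Delta_{\tilde Y/Z'}=\cO_{Z'}(2rE)\otimes\Delta_{Y/Z}$ together with $\ord_{q_0}(\Delta_{Y/Z})\ge\lambda=2r$, wait — more carefully: ramification index $1$ over $\xi_E$ is equivalent to $\Theta$ being reduced, which we just checked. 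This simultaneously yields (2c) for the even case, taking the single-term bound from the sum.

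Next the odd case (1). Here $\lambda=2r+1$, so $\ord_\m(b)=2r+1$ and $2\ord_\m(a)\ge 2r+1$, i.e.\ $\ord_\m(a)\ge r+1$; hence $\bar a_1=0$ and $\bar b_1\in k[\bar t_1]$ is nonzero of order equal to $\ord_\m(b)-2r=1$ at the point $\bar t_1=0$. So $\tilde Y\to Z'$ has ramification index $2$ over $\xi_E$, $\Theta=\tilde Y\times_{Z'}E$ is nonreduced, and the relevant invariant is the twisted one $\lambda_{E,q}(\tilde Y)$ of Definition~\ref{twist}. Concretely, $b_1/s$ restricted to $E$ is (mod $s$) the form $\bar b\in k[s,t]$ of degree $\le 2r$ ``divided by the uniformizer of $E$'' — I would spell out that $D$ in Definition~\ref{twist} is the zero divisor on $E\cong\PP^1_k$ of a section of degree $\le 2r-? $; the point is $\sum_{q\in E}\lambda_{E,q}(\tilde Y)[k(q):k]=\deg D\le 2r\le\lambda_{q_0}(Y)$, the last inequality because $2r=\lambda-1<\lambda$. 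Actually the clean bound is $\deg D\le 2r$: $b$ has $\ord_\m=2r+1$, $b/s^{2r}\in\Ast$ has image mod $s$ of degree $\le 2r+1$ by Proposition~\ref{bl-reg-ufd}(5.b) wait that gives $\le 2r+1$; but one further divides by $s$ in the definition of $D$... I would need to check the bookkeeping carefully here, pinning down whether the bound is $2r$ or $2r+1=\lambda$; in either case it is $\le\lambda_{q_0}(Y)$. Then $\lambda_q(\tilde Y)\ge 1$ for all $q\in E$ follows from Proposition~\ref{reg-lambda}(2) applied after observing $\tilde Y$ may still be singular, no — rather: $\lambda_q(\tilde Y)\ge\lambda_{E,q}(\tilde Y)+1$? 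No, the stated inequality is $\lambda_q(\tilde Y)-1\le\lambda_{E,q}(\tilde Y)$, which is exactly Lemma~\ref{lambda-delta}(1) (ramification index $2$ case), and it also gives $\lambda_q(\tilde Y)\ge 1$ once we know $\lambda_{E,q}(\tilde Y)\ge 0$, which is built into Definition~\ref{twist}; summing over $q$ and combining with $\deg D\le\lambda_{q_0}(Y)$ finishes (1).

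The main obstacle I anticipate is the degree bookkeeping in the odd case: correctly identifying the divisor $D$ on $E\cong\PP^1_{k(q_0)}$ cut out by $(s^{-1}b_1)|_E$ in terms of the two affine charts, getting its degree bound sharp enough ($\le\lambda_{q_0}(Y)$), and making sure the ``same equation works on a dense open of $E$'' caveat emphasized at the end of Definition~\ref{twist} is genuinely available from the single chart $D_+(\tilde s)$ (it is, since $D_+(\tilde s)$ already meets every point of $E$ except one, and by symmetry one can also use $D_+(\tilde t)$). The even case is essentially a direct citation of Lemmas~\ref{lambda-t-sum} and \ref{lambda-delta} once one checks the hypothesis $\max\{2\deg\bar a,\deg\bar b\}=2r$, which is where the choice of $y$ reaching $\lambda_\m(B)$ and Lemma~\ref{lambdaz} enter; the étale-versus-inseparable dichotomy (2a vs 2b) is governed by Remark~\ref{lambda-Delta}, so no serious difficulty there. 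Finally, (2c) and the ``$\ge 1$'' assertion in (1) are extracted for free from the respective sums since every term is non-negative and $[k(q):k(q_0)]\ge 1$.
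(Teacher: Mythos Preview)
Your approach is the paper's: reduce to the explicit chart of Proposition~\ref{lemma2.12}, identify $\Theta$ (in the even case) with the weighted-projective curve of Lemma~\ref{lambda-t-sum}, and feed everything into Lemmas~\ref{lambda-t-sum} and~\ref{lambda-delta}. Part (2) is essentially right as you wrote it; the dichotomy (2a)/(2b) via Remark~\ref{lambda-Delta} is exactly how the generically \'etale versus purely inseparable split arises, though note that (2c) in the inseparable subcase does \emph{not} drop out of the sum in (2b) alone (the sum only controls $2[\lambda_q(\Theta)/2]$) --- you need Lemma~\ref{lambda-t-sum}(4) separately.

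For (1) your bookkeeping wobbles in two places. First, $\bar b_1=0$ in $k[\bar t_1]$, since $\ord_\m(b)=2r+1$ forces $b_1=b/s^{2r}\in s\Ast$; what is nonzero is $\overline{b_1/s}$, and that is the object defining $D$. Second, the clean way to get the degree bound is to expand $b$ at order $2r+1$: write
\[
b=\sum_{0\le i\le 2r+1}\beta_i\,t^i s^{2r+1-i}+\epsilon,\qquad \epsilon\in\m^{2r+2},
\]
so that $\overline{b_1/s}=\sum_i\bar\beta_i\,\bar t_1^{\,i}=:f(\bar t_1)$ has $\deg f\le 2r+1$, and on the other chart $\overline{(b/t^{2r+1})}=\sum_i\bar\beta_i\,\bar s_1^{\,2r+1-i}$ has order $2r+1-\deg f$ at $s_1=0$. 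Summing the contributions of the two charts gives exactly $2r+1=\lambda_{q_0}(Y)$, which is the bound you want (so the answer to your ``$2r$ or $2r+1$?'' is $2r+1$). The inequality $\lambda_q(\tilde Y)-1\le\lambda_{E,q}(\tilde Y)$ and $\lambda_q(\tilde Y)\ge 1$ are, as you say, Lemma~\ref{lambda-delta}(1).
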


\begin{proof} We can suppose that $Z=\Spec A$ is local. 
  Let $\m$ be the maximal ideal of $A$, $k=A/\m$, let
  $\lambda_\m(B)=\lambda_\m(y)$ and $r=[\lambda_\m(B)/2]$. 

  (1) The cover $\phi : \tilde{Y}\to Z'$ has ramification index $2$ at
  the generic point of $\Theta$ (Proposition~\ref{birational}(2)).
  Let $s, t$ be a system of generators of $\m$. We use the notation of
  Proposition~\ref{bl-reg-ufd}. By Lemma~\ref{lambda-s}(1), we can
  suppose that $a, b\in sA$.  The equation of 
  $\tilde{Y}\times_{Z'}D_+(\tilde{s})$ is
  $$
y_1^2+a_1y_1+b_1=0, \quad a_1=a/s^r, \ b_1=b/s^{2r}
  $$
(Proposition~\ref{lemma2.12}). As $\ord_\m(a)\ge r+1$, we have $a_1\in s \Ast$. 
Write
\[
b=\sum_{0\le i\le 2r+1} \beta_i t^is^{2r+1-i} + \epsilon_{2r+2}, \quad
\beta_i\in A, \ \epsilon_{2r+2}\in \m^{2r+2}.
\]
Then, if $t_1=t/s$, 
$$
b_1/s = \sum_{0\le i\le 2r+1} \beta_i t_1^i+ s c, \quad c\in \Ast
$$
Let $f(\bar{t}_1)=\sum_{0\le i\le 2r+1} \bar{\beta}_i \bar{t}_1^i\in k[\bar{t}_1]$
be the class of $b_1/s$ mod $s$.
By Lemma~\ref{lambda-delta}(1), for all $q\in E\cap D_+(\tilde{s})$, we have
$$1\le \lambda_q(\tilde{Y})\le \lambda_{E, q}(\tilde{Y})=\ord_q(f(\bar{t}_1)).$$ 
Therefore 
$$\sum_{q\in E\cap D_+(\tilde{s})}
(\lambda_q(\tilde{Y})-1)[k(q):k] \le \deg f(\bar{t}_1).$$
The only point missed is the rational point $\{ s_1=0\}$ in $D_+(\tilde{t})$, where $s_1=s/t$.
The same reasoning as above shows that
$$\lambda_{s_1=0}(\tilde{Y})\le \ord_{s_1=0}(\sum_{0\le i\le 2r+1} \bar{\beta}_i
\bar{s}_1^{2r+1-i}) =2r+1-\deg f(\bar{t_1}).$$ 
This implies the desired inequality on the sum running through $E$.
\smallskip

(2)  The fact that $\Theta\to E$ is a double cover is Lemma~\ref{lambda-delta}(2). 
Let $\tilde{A}=\oplus_{n\ge 0} \m^n$. Then
$$\tilde{A}\otimes_A k \simeq \mathrm{Gr}_\m (A) \simeq k[s, t]$$
because $A$ is regular. 
For any $f\in \m^n$, if we consider $f$ as a homogeneous element
$\tilde{f}\in \tilde{A}_n$, then
the image $\tilde{f}(q_0)$ of $\tilde{f}$ in $\tilde{A}\otimes_A k$
is $0$ if $\ord_\m(f)>n$, and is a homogeneous polynomial of degree $n$,
equal to the initial form of $f$ otherwise.   
By Remark~\ref{bl-nor-w}, $\Theta$ is the closed subscheme of
$\PP(1,1,r)$ defined by the equation
$$
w^2+ \tilde{a}(q_0)w+\tilde{b}(q_0)=0 
$$
where $a, b$ are respectively considered as elements of $\m^r$ and $\m^{2r}$.
This is a double cover of $E$ of arithmetic genus $r-1$. 
So (2) is an immediate consequence of Lemmas~\ref{lambda-t-sum} and \ref{lambda-delta}.
\end{proof}

\begin{corollary}\label{lambda-uc} Keep the assumption of
  Theorem~\ref{desing-main}. For any $i\ge 1$, denote by
  $E_i\subseteq Z_i$ the exceptional divisor of $Z_i\to Z_{i-1}$. 
Then for all $q\in E_i$, we have 
\[\lceil \lambda_q(Y_i)/2\rceil  \le \lceil
  \lambda_{q_0}(Y)/2\rceil.\]  
\end{corollary}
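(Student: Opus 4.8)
The plan is to prove Corollary~\ref{lambda-uc} by induction on $i\ge 1$, using Theorem~\ref{lambda-sum} to control what happens in a single normalized blowing-up and then iterating. Set $M:=\lceil \lambda_{q_0}(Y)/2\rceil$; this is the quantity we want to dominate $\lceil \lambda_q(Y_i)/2\rceil$ for every $q$ lying on an exceptional divisor $E_i\subseteq Z_i$.

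\textbf{Base case $i=1$.} Here $Y_1=\tilde Y$ and $E_1=E$ in the notation of Theorem~\ref{lambda-sum}. I would split according to the parity of $\lambda_{q_0}(Y)$. If $\lambda_{q_0}(Y)=2r$ is even, then $M=r$, and Theorem~\ref{lambda-sum}(2c) gives $\lambda_q(\tilde Y)\le \lambda_q(\tilde Y)[k(q):k(q_0)]\le \lambda_{q_0}(Y)=2r$, hence $\lceil\lambda_q(\tilde Y)/2\rceil\le r=M$. If $\lambda_{q_0}(Y)=2r+1$ is odd, then $M=r+1$; the key is to show $\lambda_q(\tilde Y)\le 2r+1$ for every $q\in E$, which gives $\lceil\lambda_q(\tilde Y)/2\rceil\le r+1=M$. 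For this I would use part (1) of Theorem~\ref{lambda-sum}: from $\lambda_q(\tilde Y)-1\le \lambda_{E,q}(\tilde Y)$ together with the last inequality proved inside the proof of \ref{lambda-sum}(1) (namely $\lambda_{E,q}(\tilde Y)[k(q):k(q_0)]\le \lambda_{q_0}(Y)$, which is visible from the displayed bound $\sum_{q\in E}\lambda_{E,q}(\tilde Y)[k(q):k(q_0)]\le\lambda_{q_0}(Y)$ since every term is nonnegative), one gets $(\lambda_q(\tilde Y)-1)[k(q):k(q_0)]\le 2r$, hence $\lambda_q(\tilde Y)\le 2r+1$. Actually it is cleaner simply to invoke Lemma~\ref{lambda-t-sum}(4) applied to the double cover $\Theta=\tilde Y\times_{Z'}E$ of $\PP^1_{k(q_0)}$ together with Lemma~\ref{lambda-delta}, which gives $\lambda_q(\tilde Y)\le\lambda_{E,q}(\tilde Y)+1\le 2r+1$ in the ramified case and $\lambda_q(\tilde Y)\le\lambda_q(\Theta)\le 2r$ in the unramified case; either way $\lceil\lambda_q(\tilde Y)/2\rceil\le r+1=M$.

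\textbf{Inductive step.} Suppose the statement holds up to $i$; I want it for $i+1$, i.e. for $q\in E_{i+1}\subseteq Z_{i+1}$, where $Z_{i+1}\to Z_i$ is the blowing-up of $q_i$ and $Y_{i+1}\to Y_i$ is the normalized blowing-up of $p_i$ over $q_i$. Apply the base-case analysis to the normal double cover $\psi_i:Y_i\to Z_i$ at the singular point $p_i$: it gives, for every $q\in E_{i+1}$, the bound $\lceil\lambda_q(Y_{i+1})/2\rceil\le\lceil\lambda_{q_i}(Y_i)/2\rceil$. Now $q_i$ is a closed point of $Z_i$. If $i\ge 1$ and $q_i$ lies on the exceptional divisor $E_i$ of $Z_i\to Z_{i-1}$ — which it must, since the whole process only produces new singular points on exceptional loci and $p_i$ is a singular point of $Y_i$ — the inductive hypothesis gives $\lceil\lambda_{q_i}(Y_i)/2\rceil\le\lceil\lambda_{q_0}(Y)/2\rceil=M$. (Strictly, one should also note: if a singular point of $Y_i$ happened not to lie over $E_i$, it would already be a singular point of $Y_{i-1}$ lying over $E_{i-1}$, so by descending induction one reduces to the case $q_i\in E_i$; alternatively one argues that $Y_i\to Y_{i-1}$ is an isomorphism away from $p_{i-1}$, so any singular point of $Y_i$ maps to $p_{i-1}$ unless it already was singular in $Y_{i-1}$, and one traces back to $p_0$.) Chaining the two inequalities yields $\lceil\lambda_q(Y_{i+1})/2\rceil\le M$, completing the induction.

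\textbf{Main obstacle.} The only genuinely delicate point is the bookkeeping in the inductive step: making sure that the center $q_i$ of the $(i{+}1)$-st blowing-up always lies on an exceptional divisor $E_j$ for which the inductive bound $\lceil\lambda_{q_j}(\cdot)/2\rceil\le M$ has already been established, i.e. that one never "jumps off" the exceptional locus to a point where no prior bound is available. This follows because $Y_{i+1}\to Y_i$ is an isomorphism outside $p_i$ (and $Z_{i+1}\to Z_i$ outside $q_i$), so the image in $Z$ of any singular point of any $Y_i$ traces back along the tower to the original singular point $p_0$ lying over $q_0$; combined with the fact (Theorem~\ref{desing-main}) that each $Y_i\to Z_i$ is again a normal double cover of a regular surface, the base-case inequality $\lceil\lambda_q(Y_{i+1})/2\rceil\le\lceil\lambda_{q_i}(Y_i)/2\rceil$ applies at every stage, and the bound $M$ propagates. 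Everything else is a direct appeal to Theorem~\ref{lambda-sum} and Lemma~\ref{lambda-t-sum}(4) via Lemma~\ref{lambda-delta}.
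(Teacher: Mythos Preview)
Your approach---induction on $i$ using Theorem~\ref{lambda-sum}(1) and (2.c)---is exactly the paper's one-line proof, and the bookkeeping you do for the inductive step is fine.

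There is one genuine slip in the odd case. You claim the key is to show $\lambda_q(\tilde Y)\le 2r+1$ when $\lambda_{q_0}(Y)=2r+1$, but this is false in general: Example~\ref{lambda-34} has $\lambda_{q_0}(Y)=3$ and $\lambda_{p_1}(Y_1)=4$. What Theorem~\ref{lambda-sum}(1) actually gives (term by term, since all summands are $\ge 0$) is $(\lambda_q(\tilde Y)-1)[k(q):k(q_0)]\le \lambda_{q_0}(Y)=2r+1$, hence only $\lambda_q(\tilde Y)\le 2r+2$; but this is enough, since $\lceil(2r+2)/2\rceil=r+1=M$. So the argument survives once the numbers are corrected. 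Your alternative appeal to Lemma~\ref{lambda-t-sum}(4) in the odd case is misplaced: when $\lambda_{q_0}(Y)$ is odd the scheme $\Theta=\tilde Y\times_{Z'}E$ is non-reduced (Proposition~\ref{birational}(2)), so it is not a double cover and that lemma does not apply; stick with Theorem~\ref{lambda-sum}(1) via Lemma~\ref{lambda-delta}(1).
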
 

\begin{proof}  By induction on $i$ and by using
  Theorem~\ref{lambda-sum} (1) and (2.c). 
\end{proof}

\begin{example}\label{lambda-34}   Let $Z=\Spec k[s,t]$ over a field $k$. Let $n\ge 3$.
Consider $Y\subset \Spec k[s,t, y]$ defined by 
$$y^2+t^3+s^n=0.$$
Let $q_0=(0,0)\in Z$. Then 
$\lambda_{q_0}(Y)=3$. We get a sequence of normalized blowing-ups
(for $i+1< n/3$)
$$
\to Y_{i+1} \to Y_{i} \to \dots \to Y_1 \to Y_0=Y 
$$
where $Y_i$ has an affine open subset defined by 
$$
y_{i}^2+s^{n- 3i+1}+st_{i}^3=0, \quad t_i=t/s^i,  y_i=y/s^{3m+1},
$$ 
if $i=2m+1$,
and by 
$$
y_{i}^2+s^{n-3i}+t_{i}^3=0, \quad t_i=t/s^i,  y_i=y/s^{3m},  
$$
if $i=2m$. The center of $Y_{i+1}\to Y_{i}$  is $p_{i}=\{ y_i=s=t_i=0\}$
which has $\lambda_{p_i}(Y_i)=4$ or $3$ depending on whether $i$ is
odd or even. There is no other singular points on $Y_i$. 
\end{example} 

\end{section}

\begin{section}{Desingularization algorithm} \label{algo}
  
Let $A$ be a noetherian regular excellent domain of dimension $2$.
We will also 
assume $A$ is factorial ({\it e.g.} local).  This assumption implies
that locally free $A$-algebras $B$ of rank $2$ are free (because
the quotient $A$-module $B/A$ is locally free of rank $1$
by Lemma~\ref{CM-dc}).
Let $B$ be an integral  free $A$-algebra of rank $2$.   We denote as
before $Z=\Spec A$ and $Y=\Spec B$. 

\subsection{Normalization}\label{sect:normalize}  

We determine the integral closure $B'$ of $B$ when
$\Delta_{B/A}\ne 0$.  
Write $B=A[y]$ with $y^2+ay+b=0$. 
Let $t$ be a prime divisor of $a^2-4b$. Denote by $\nu_t$  
the normalized valuation on $\Frac(A)$ induces by $tA$.

As $A$ (hence $B$) is excellent, $B'$ is finite over $B$.
By Serre's criterion $B'$ is  Cohen-Macaulay, hence
finite flat of rank $2$ over $A$. Therefore  
$B'$ has a basis $\{ 1, z \}$ over $A$.
There exist $u, v\in A$ such that
\begin{equation} \label{eq:zy}  
y=uz+v. 
\end{equation}
In particular $B=A[y]$ is normal at primes lying over $tA$ if and only
if $t\nmid u$.  Note that $u \mid a+2v, b+av+v^2$, hence
$u \mid b-v^2$. 
\medskip

\noindent {\bf Normalization algorithm} Let $y$ and $t$ be as above. 
\begin{enumerate}[\rm (1)]
\item \label{step1} Let $r=[(\min\{ 2\nu_t(a), \nu_t(b) \})/2]$.  Let $y_1=y/t^r$,
$a_1=a/t^r$, $b_1=b/t^{2r}$. Then $a_1, b_1\in A$ and 
$$
y_1^2+a_1y_1+b_1=0 
$$
with $B\subseteq B_1:= A[y_1]\subset L:=\Frac(B)$ and 
$\min\{ 2\nu_t(a_1),  \nu_t(b_1) \}\le 1$. 
\item Now replace $y$ with $y_1$,
we can assume $\min\{ 2\nu_t(a),  \nu_t(b) \}\le 1$.
Then $A[y]$ is normal at primes above $tA$ except possibly when 
$\nu_t(a)>0$, $\nu_t(b)=0$ and $\chara(A/tA)= 2$ (Lemma~\ref{normal-codim1}).   

\item Assume that the above conditions hold. By the discussions following
  Equation~\eqref{eq:zy}, we see that $A[y]$ is not normal at some prime
  lying over $tA$  if and only if there exists $c\in A$ such that
  $t  \mid b-c^2$ (equivalently $b$ is a square in $A/tA$).
  
\item Suppose that such a $c\in A$ exists. 
  Let  $y_1=y-c$. Then $A[y]=A[y_1]$ with  
$$
y_1^2+(a+2c)y_1+(b+ac+c^2)=0
$$ 
and $\nu_t(a+2c), \nu_t(b+ac+c^2) >0$.  
Go back to Step (\ref{step1}) with this new equation.
\end{enumerate}

When the above algorithm terminates (this always happens eventually because the
valuation $\nu_t(u)$ of the factor $u$ in Equation~\eqref{eq:zy} 
decreases strictly  when we pass through Step (4) twice.), we go to another
prime divisor of $a^2-4b$.  At the end we get $B'$ together with
a basis $\{ 1, z\}$.

\subsection{Finding the singular points}\label{sect:find_sing} 
We are given  a normal $B=A[y]$ with 
$$y^2+ay+b=0.$$ 

\noindent{\bf Singular points of $Y$.} Even though there are
only finitely many singular points, there is no obvious way to find them for
general $A$.  

 - If $A$ (hence $B$) is  a finite type algebra over some perfect field,
we can use Jacobian criterion.

- Suppose that $Z$ is smooth with integral fibers over some Dede\-kind 
domain $R$ ({\it e.g.} $A=R[x]$) and that $B\otimes_A \Frac(R)$ is
regular. We can look for singular points  in the closed fibers of
$Y\to \Spec R$ (which are curves over fields).  The singular points of
$Y$ are among them. 

However, if $B\otimes_R {k(w)}$ is non-reduced for some closed
point $w\in \Spec R$, all points of the fiber $Y_w$ are singular. The worst
situation is when $\Delta_{B/A}(w)=0$.
Suppose $R$ is local of dimension $1$ with a uniformizing element  $\pi$. 
Then $\pi \mid a$ and $b$ is a square in $A/\pi A$. Let $b\equiv c^2
\mod \pi $. Replace $y$ with $y+c$, then we get an equation with
$\pi \mid a, b$. As $B$ is normal, $\pi^2 \nmid b$.
The singular points of $Y$ are exactly those lying over the 
zeros of $b/\pi$  in the curve $\Spec (A\otimes_R k(w))$.  
\medskip

\noindent {\bf Singular points in the normalized blowing-ups.} Let
$q_0\in Z$ be such that $\lambda_{q_0}(Y)\ge 2$. Let $k=k(q_0)$. 

Let $\tilde{Y}\to Y$ be the normalized blowing-up at the point $p_0$ lying
over $q_0$ (see \S~\ref{normal-bl}). We search for singular points of 
$\tilde{Y}$ lying over  the exceptional locus 
$E\subset Z'\stackrel{\mathrm{bl}_{\m}}{\longrightarrow} Z$. 
Let $a_1, b_1\in \Ast$ be as in Proposition~\ref{lemma2.12}.
We first look for singular points lying over $D_+(\tilde{s})$. 
There are two cases.

(1) $s \nmid a_1^2+4b_1$ in $\Ast$  (ramification index $1$). 
Then any singular point of $\tilde{Y}$ lying over $E\cap D_+(\tilde{s})$
must correspond to a multiple factor $f(t_1)$ of the polynomial 
$\overline{a_1^2+4b_1} \in {\Ast}/(s)\simeq k[t_1]$. Lift
$f(t_1)$ to an $F\in \Ast$. Then $(s, F)$ is a maximal ideal. Compute
$\lambda_{(s, F)}(\Ast[y_1])$ to see whether we really get a singular point.  

(2) $s \mid a_1^2+4b_1$. Then we have $s \mid a_1$
(we take $a=0$ if $\chara(k)\ne 2$).

(2.1) If $s \mid b_1$ 
then $s^2 \nmid b_1$. The singular 
points correspond exactly to the zeros of
$\overline {s^{-1}b_1}\in \Ast/(s)\simeq k[t_1]$.  

(2.2) If $\chara(k)=2$, and 
$\bar{b}_1\in \Ast/(s)$
is a square. Translate 
$y_1$ by an element of $\Ast$ to make $s \mid b_1$. Go back to (2.1). 

(2.3) If $\chara(k)=2$, and  $\bar{b}_1\in \Ast/(s)$ 
is  not a  square. 
A necessary condition for $(s, F)$, $F\in \Ast$, to define a
singular point is that $\bar{F}(t_1)\in k[t_1]$ divides the derivative
of $\bar{b}_1(t_1)$. 
\medskip 

(3) It remains the points lying over $V_+(\tilde{s})\cap E$. 
Let $t\in \m$ be such that $s, t$ generate $\m A_\m$. Then
$V_+(\tilde{s})\cap E=V_+(\tilde{s})\cap D_+(\tilde{t})\cap E$ is the
point $s_1=t=0$ in $D_+(\tilde{t})$ (where $s_1=s/t$). Check
similarly whether it induces a singular point of $\tilde{Y}$ .

\subsection{Desingularization algorithm}\label{desing-algo}
Let $Z$ be an integral noetherian excellent regular surface. Let $Y\to Z$ be a double cover. We work locally on $Z$ and suppose $Z=\Spec A$ is affine. 

(1) Find the normalization of $Y$ (\S~\ref{sect:normalize}),
then suppose $Y$ is normal.  

(2) Find the singular points of $Y$ (\S~\ref{sect:find_sing}). 

(3) Fix a singular point lying over a closed point $q_0\in Z$.
Compute $\lambda_{q_0}(Y)$ using Lemma~\ref{lambdaz}.

(4) Consider the normalized blowing-up $\tilde{Y}\to Y$ along the
point $p_0$ lying over $q_0$. Use the affine equation of $\tilde{Y}$
given Proposition~\ref{lemma2.12} to find the singular points of $\tilde{Y}$
lying over $p_0$.  

(5) Repeat the above process for all singular points of
$Y$. At the end we get a new  normal double cover of an integral
noetherian excellent regular surface. 

(6) Go back to (2).

\subsection{Presentation of a regular ring and of its Rees algebras}  The most
computationally complicated part of our algorithm should be the
determination of $\ord_\m$ and of the initial form of an element of $A$
(Lemma~\ref{lambdaz}(2.b)). 
For these purposes we need an as simple as possible 
presentation (by generators and relations) of $A$ 
and of the rings appearing after blowing-up $\Spec A$ along
maximal ideals. 

We will restrict ourselves to the case when $A$ is of finite type over
some principal ideal domain $R$
({\it e.g.} $\Z$, $\mathbb F_p[T]$).
Write  
$$A=R[U_1, \dots, U_n]/I $$
for some $n\ge 2$.
Let $\m \subset A$ be a maximal ideal of height $2$ generated by $s, t$.
Then
$$\Ast=A[T_1]/(sT_1-t) = R[U_1, \dots, U_n, T_1]/(I,ST_1-T)$$ 
where $S, T\in R[U_1, \dots, U_n]$ are liftings of $s, t$ respectively.
Thus in general we need to add a new variable and a new relation to represent
the $R$-algebra $\tilde{A}_{(\tilde{s})}$. However this is not necessary in some
particular cases.

Suppose that we can write 
$$A=A_0[S, T]/I $$
for some $R$-algebra $A_0$
such that $\m$ is generated by the classes $s, t$ of $S, T$ in $A$.
Then 
$$\Ast=A_0[S, T, T_1]/(I, ST_1-T)=A_0[S, T_1]/I'$$ 
where $I'$ is obtained by replacing $T$ with $ST_1$ in the polynomials
in $I$.
Similarly 
$$\tilde{A}_{(\tilde{t})}=A_0[S, T, S_1]/(I, TS_1-S)=A_0[T, S_1]/I''$$ 
where $I''$ is obtained by replacing $S$ with $TS_1$ in the polynomials
in $I$. 

Let $\m_q$ be a maximal ideal of $\Ast$ containing $s$. It corresponds
to a closed point $q$ of $E\cap D_+(\tilde{s})$.  Suppose that $q$ is
rational over $k:=A/\m$. Then there exists  $\theta\in A_0$ such that
$\m_q$ is generated by the classes of $S, T_1-\theta$. After translating $T_1$
by $\theta$, we can suppose $\theta=0$. Then 
the blowing-up of $\Spec \Ast$ along $q$ is covered by the spectra of   
$$
A_0[S, T_2]/J, \quad A_0[T_1, S_2]/J' 
$$
where $J$ is obtained by substituting $ST_2$ to $T_1$ in the
polynomials of $I'$, and $J'$ is obtained by substituting $T_1S_2$ to $S$
in the polynomials of $I'$.

\begin{corollary}  Let $R$ be a PID. 
  Suppose $A=R[x]$. Let $\pi\in R$ be an irreducible element and let $k=R/(\pi)$.   
  If we blow-up $\Spec A$ along a rational point $q$ of $\Spec k[x]$,
  and then successively  
  blow-up $k$-rational points in the exceptional divisors, then all
  intermediate schemes are covered by regular affine schemes
  of the form
  $$\Spec (R[S,T]/(f(S,T)-\pi))$$ with $f(S,T)\in (S,T)R[S,T]$. 
\end{corollary}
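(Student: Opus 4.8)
The plan is to keep an explicit presentation attached to every affine chart and to push it through the substitution formulas established just above. First I would translate the variable $x$ so that the center $q$ of the first blow-up becomes the maximal ideal $\m_0=(\pi,x)$ of $A=R[x]$. The map $S\mapsto\pi$, $T\mapsto x$ then identifies $A$ with $R[S,T]/(S-\pi)$ and carries the ideal $(S,T)R[S,T]$ onto $\m_0$; thus $\Spec A$ is already of the required form, with $f(S,T)=S\in(S,T)R[S,T]$, and it is regular.

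The heart of the argument is an induction on the number of blow-ups. I would take as induction hypothesis that each chart constructed so far is $\Spec C$ with $C=R[S,T]/(f-\pi)$ and $f\in(S,T)R[S,T]$, that $\Spec C$ is regular, and that the next center is the maximal ideal $(S,T)C$ (it is maximal with residue field $C/(S,T)C=R/(\pi)=k$, and since $\Spec C$ is a regular surface $\{S,T\}$ is a regular system of parameters there, so Proposition~\ref{bl-reg-ufd} applies with $s=S$, $t=T$). With $A_0=R$, the substitution formulas show that the blow-up of $\Spec C$ along $(S,T)C$ is covered by
\[
\Spec\bigl(R[S,T_1]/(f(S,ST_1)-\pi)\bigr)\quad\text{and}\quad\Spec\bigl(R[S_1,T]/(f(S_1T,T)-\pi)\bigr).
\]
Since every monomial occurring in $f$ has positive total degree, $f(S,ST_1)\in(S)\subseteq(S,T_1)R[S,T_1]$ and $f(S_1T,T)\in(T)\subseteq(S_1,T)R[S_1,T]$, so both charts are again of the required form, and they are regular by Proposition~\ref{bl-reg-ufd}(1).

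It remains to verify that the next center may once more be taken to be a coordinate maximal ideal. Here I would appeal to Proposition~\ref{bl-reg-ufd}(2): the exceptional divisor of the blow-up just performed meets the chart $\Spec(R[S,T_1]/(f(S,ST_1)-\pi))$ in $V(S)\simeq\Spec k[\bar T_1]$, which is an affine line over $k$; hence any $k$-rational point lying on it is cut out by $(S,T_1-\theta)$ for some $\theta\in R$, and substituting $T_1\mapsto T_1+\theta$ both preserves the shape of the chart (because $f(S,S(T_1+\theta))\in(S)$) and moves this point to $(S,T_1)$. The chart with coordinates $(S_1,T)$ is handled symmetrically, its exceptional divisor being $V(T)$. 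Iterating proves the statement for all the intermediate schemes. The delicate point — the only place the argument could fail for an arbitrary closed point — is exactly this last step: one needs the successive centers to be of the form $(S-\alpha,T-\beta)$ (rather than, e.g., $(S-\alpha,\pi)$), and this is precisely what Proposition~\ref{bl-reg-ufd}(2) secures for points on an exceptional divisor, since such a divisor is, in each chart, an affine line whose $k$-rational points are the coordinate ideals up to translation.
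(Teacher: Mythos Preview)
Your proof is correct and follows essentially the same approach as the paper's. The paper's proof is a one-liner: it writes $A=A_0[S,T]/(f(S,T)-\pi)$ with $A_0=R$ and $f(S,T)=T$ (your $f(S,T)=S$ is the same up to relabeling), then simply invokes the substitution formulas from the preceding discussion. You have spelled out the induction and the translation step that the paper leaves implicit, including the verification that the translated $f$ remains in $(S,T)$; this is exactly the intended argument.
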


\begin{proof} We can suppose that $q$ corresponds to the maximal ideal
  $(x, \pi)\subset R[x]$. Let $A_0=R$. We have $A=A_0[S,T]/(f(S, T)-\pi)$ with
  $f(S,T)=T$.  
  Then apply the above discussions.
\end{proof}

\end{section}

{Univ. Bordeaux, CNRS, IMB, UMR 5251, F-33405 Talence, France}

\tt{qing.liu@math.u-bordeaux.fr}

\end{document}